\newcommand{\hV}{\mathcal{V}}
\newcommand{\E}{\mathbb{E}}
\newcommand{\var}{\mathrm{Var}}
\newcommand{\hT}{\mathcal{T}}
\newcommand{\he}{\hat \sigma}
\newcommand{\U}{U}
\newcommand{\hh}{h}
\newcommand{\real}{\mathbb R}
\newcommand{\eg}{\textit{e.g. }}
\newcommand{\ie}{\textit{i.e. }}
\newcommand{\at}{a}		
\newcommand{\e}{\sigma}		
\newcommand{\p}{P}			
\newcommand{\rate}{r}
\newcommand{\st}{\tau}		
\newcommand{\hst}{\hat \tau}	
\newcommand{\x}{y}			
\newcommand{\y}{x}			
\newcommand{\uc}{c}		
\newcommand{\uind}{\bar u}
\newcommand{\off}{o}		
\newcommand{\C}{\delta}	
\newcommand{\D}{\epsilon}
\newcommand{\Q}{W}
\newcommand{\YDS}{\eqref{eq:YDS_optimization_problem}}
\newcommand{\hp}{\tilde v}	
\newcommand{\Z}{\mathbb Z_+}
\newcommand{\R}{\mathbb R}
\newcommand{\Rp}{\mathbb R_+}
\newcommand{\pint}{\Lambda}
\newcommand{\pinti}{\Lambda}
\newcommand{\fint}{\lambda}
\definecolor{ao}{rgb}{0.0, 0.0, 1.0}
\newcolumntype{L}[1]{>{\hsize=#1\hsize\raggedright\arraybackslash}X}%
\newcolumntype{R}[1]{>{\hsize=#1\hsize\raggedleft\arraybackslash}X}%
\newcolumntype{C}[2]{>{\hsize=#1\hsize\centering\arraybackslash}X}%
\newcommand{\re}[1]{\textcolor{black}{#1}}
\begin{document}

\RUNAUTHOR{Nakahira, Ferragut, and Wierman}

\RUNTITLE{Minimal-Variance Distributed Deadline Scheduling}

\TITLE{Generalized Exact Scheduling: a Minimal-Variance Distributed Deadline Scheduler}

\ARTICLEAUTHORS{%
\AUTHOR{Yorie Nakahira}
\AFF{Carnegie Mellon University, \EMAIL{yorie@cmu.edu}} 
\AUTHOR{Andres Ferragut}
\AFF{Universidad ORT Uruguay, \EMAIL{ferragut@ort.edu.uy}}
\AUTHOR{Adam Wierman}
\AFF{California Institute of Technology, \EMAIL{adamw@caltech.edu}}
} 

\ABSTRACT{

Many modern schedulers can dynamically adjust their service capacity to match the incoming workload. At the same time, however, unpredictability and instability in service capacity often incur operational and infrastructure costs. In this paper, we seek to characterize optimal distributed algorithms that maximize the predictability, stability, or both when scheduling jobs with deadlines. Specifically, we show that \emph{Exact Scheduling} minimizes both the stationary mean and variance of the service capacity subject to strict demand and deadline requirements. For more general settings, we characterize the minimal-variance distributed policies with soft demand requirements, soft deadline requirements, or both. The performance of the optimal distributed policies is compared to that of the optimal centralized policy by deriving closed-form bounds and by testing centralized and distributed algorithms using real data from the Caltech electrical vehicle charging facility and many pieces of synthetic data from different arrival distribution. Moreover, we derive the Pareto-optimality condition for distributed policies that balance the variance and mean square of the service capacity. Finally, we discuss a scalable partially-centralized algorithm that uses centralized information to boost performance and a method to deal with missing information on service requirements. 


}

\KEYWORDS{Deadline scheduling, Service capacity control, Exact Scheduling, Online distributed algorithm} 
\maketitle

\section{Introduction}
\label{sec:intro}

Traditionally, the scheduling literature has assumed a static or fixed service capacity.  However, it is increasingly common for modern applications to have the ability to dynamically adjust their service capacity in order to match the current demand. For example, power distribution networks match the energy supply demand as it changes over time and, when using cloud computing services, one can modify the total computing capacity by changing the number of computing instances and their speeds. 

\re{The ability to adapt service capacity dynamically gives rise to challenging new design questions. In particular, how to enhance the predictability and stability of service capacity is of great importance in such applications since peaks and fluctuations often come with significant costs 
\cite{stankovic1990predictability, melnik2010dremel, buttazzo2011hard}. For example, in the emerging load from electric vehicle charging stations, maintaining stable power consumption (\ie limiting the fluctuations in power consumption) is important because large peaks in power use may strain the grid infrastructure and result in a high peak charge for the station operators. The stations also prefer predictable power consumption (\ie knowing future power consumption) because purchasing power in real time is typically more expensive than purchasing in advance. Cloud content providers also prefer stable and predictable service capacity because on-demand contracts for compute instances (\eg Amazon EC2 and Microsoft Azure) are typically more expensive than long-term contracts. Additionally, significant fluctuations in service capacity induce unnecessary power consumption and infrastructure strain for computing equipment.} 

\re{Thus, in situations where service capacity can be dynamically adjusted, an important design goal is to reduce the costs associated with unpredictability and instability in the service capacity while maintaining a high quality of service, \eg meeting job deadlines and satisfying job demands. In this work, we study this problem by designing policies that minimize the variance of the service capacity in systems where jobs arrive with demand and deadline requests. Our model is motivated by power distribution networks, where the size of jobs and (active) service capacity are small compared to the total energy resources available and where contracts often depend on the mean and variance of service capacity, \eg if a charging station participates in the regulation market, then costs/payments rely explicitly on them \cite{spees2007demand,behrangrad2015review}.}

\re{Although the literature on deadline scheduling is large and varied, optimal algorithms are only known for certain niche cases. We review some of these results below in the related work section below. We emphasize however that only recently have researchers approached the task of designing algorithms that balance service quality and costs associated with variability. Much of the work on this topic has been application-driven, particularly in the areas of cloud computing and power distribution systems. As we mentioned before, in these areas service capacity is indeed elastic by design, and variability has direct cost implications. In this regard, no general optimality results have been proven so far about how to balance service quality and cost, except in some limited settings, such as deterministic worst-case settings~\cite{bansal2007speed}, single server systems~\cite{panwar1988optimal,panwar1988optimality,bhattacharya1989optimal}, and/or heavy traffic settings \cite{lehoczky1997using,gromoll2007heavy}. In heavy traffic settings, the dynamic behavior can be approximated by a continuous-state process involving Brownian motion, for which there exist established tools to optimize. On the other hand, optimizing queueing systems without continuous-state approximations remains a hard problem. Solving this problem is a challenging task due to the heterogeneity of jobs (diversity in service requests) and the size of the state and decision space (numbers of possible configurations on existing job profiles and the set of feasible control policies).} 

\re{In this paper, our goal is to derive general optimality results that hold beyond the heavy-traffic regime.  Further, we seek to design optimal \emph{distributed algorithms}, which only use local information about each job to decide the desirable service rate. Those algorithms are particularly useful for large systems such as power distribution networks and cloud computing, where implementing centralized algorithms is likely to be prohibitively slow and costly in large-scale service systems, \ie we are unlikely to be able to access global information about all jobs and servers in real time when deciding the service rate of individual jobs. Despite this constraints in information sharing, we show that, interestingly, the optimal distributed algorithms under mild assumptions have comparable performance to centralized algorithms. }


\paragraph{Contributions of this paper.} 

In this paper, we adapt tools from optimization and control theory to characterize  the optimal distributed policies in a broad range of settings without any approximation. Further, we provide a novel competitive-ratio-like bounds that describes the gap between the performance of optimal distributed policies and the performance of optimal centralized policies. 

Specifically, we identify the optimal distributed algorithms under strict demand and deadline requirements (Theorem \ref{thm:1_stationary_problem1}), soft demand requirements (Theorem \ref{thm:stationary_softdemand}), soft deadline requirements (Theorem \ref{thm:stationary_softdeadline}), and soft demand and deadline requirements (Theorem \ref{thm:stationary_softdemand+deadline}) in settings with stationary Poisson arrivals as well as non-stationary Poisson arrivals (Theorem \ref{thm:nonstationary_maxpred_hard} and Corollary~\ref{cor:nonstationary_maxpred_soft}).

Our first results focus on stationary arrivals. While a considerable amount of work has analyzed the variance of specific policies (see \cite{ferragut2017controlling} and references therein), little prior work characterizes the optimal policies. In the basic setting of strict service requirements, we show that \textit{Exact Scheduling} is the optimal distributed algorithm, \ie the distributed algorithm that minimizes the stationary service capacity variance. Exact Scheduling is a simple scalable algorithm that works by finishing jobs \textit{exactly} at their deadlines using a constant service rate~\cite{liu1973scheduling,buttazzo2011hard,ferragut2017controlling}.  Although it has received considerable attention in the existing literature, its optimality conditions have been unknown. In more general settings of soft service requirements, we propose novel generalizations of Exact Scheduling, each of which minimizes a combination of the service capacity variance, the expected penalties for unsatisfied demands, and the expected penalties for deadline extensions. These optimal algorithms all have closed-form expressions and use constant service rates with varying forms of rate and admission control. Due to these properties, they are also easy to implement and highly scalable.  


We also extend our results to the case of non-stationary Poisson job arrivals and characterize the pareto-optimal algorithm that balances service capacity variance, penalties for unsatisfied demands, and penalties for deadline extensions. Additionally, we consider a more general class of objective functions: the service capacity variance, the mean-squared service capacity, and the weighted sum of the two. The resulting optimal algorithm has a striking analogy to the YDS algorithm~\cite{yao1995scheduling}, which is an offline algorithm that minimizes service capacity peaks in a related, deterministic worst-case version of the problem. This connection suggests the opportunity to transform other deterministic offline algorithms to stochastic online algorithms in related problems.

Given our focus on \textit{distributed} algorithms, an important question is how these distributed algorithms perform compared with the optimal centralized algorithm. However, a major difficulty comes from the fact that the optimal centralized algorithms are unknown and no bounds on the optimal cost exist. Leveraging tools from optimal control, we provide closed-form formulas on the performance degradation due to using distributed algorithms (Lemma \ref{thm:lower_bound} and Corollary \ref{thm:lower_bound2}).  The resulting bounds suggest that, when sojourn times are homogeneous, Exact Scheduling attains the optimal trade-off between service capacity variance and total remaining demand variance achievable by any centralized algorithms.  Note that our proof technique (Lemma \ref{thm:lower_bound}) is novel in its use of optimal control and has the potential for providing competitive-ratio-like bounds for other scheduling policies. We also compare distributed algorithms with centralized algorithms in our motivating examples of electric vehicle charging. Our test in Caltech electric vehicle charging testbed~\cite{testbed} shows that the proposed optimal distributed algorithms also achieve comparable performance with existing centralized algorithms in practice. 

\paragraph{Related work.}

\re{There is an extensive literature that studies the design and analysis of deadline scheduling algorithms (see \cite{kleinrock1975queueing,baccelli,buttazzo2011hard,stankovic2012deadline} and references therein). 
Examples of classic scheduling algorithms include Earliest Deadline First~\cite{panwar1988optimal,towsley1989,panwar1988optimality,kruk2011heavy,moyal2013queues,bhattacharya1989optimal} and Least Laxity First~\cite{towsley1989}, among others~\cite{pinedo1983stochastic,lehoczky1997real}. 
Beyond these classic algorithms, more modern algorithms simultaneously perform admission control and service rate control to exploit the flexibility arising from soft demand or deadline requirements, \eg~\cite{plambeck2001multiclass, maglaras2005queueing, ccelik2008dynamic}. }

\re{The trade-offs between service quality and costs associated with variability have become a focus only recently~\cite{dean2013tail,boutin2014apollo,ferragut2017controlling}, motivated by applications such as cloud computing and power distribution systems. In the context of cloud computing, algorithms have been proposed to control the variability of power usage in data centers using deferrable jobs (see~\cite{zhu2010resource,adnan2012energy, adnan2014workload,lin2013dynamic, wang2008cluster, kusic2009power,mukherjee2009spatio,chen2005managing,vaquero2011dynamically, liu2011greening,gandhi2013dynamic,gandhi2009optimal} and references therein). In the context of power distribution systems, algorithms have been designed to control the variability of energy supply using deferrable loads (see~\cite{poolla_flexibility2013,poolla_scheduling2013,gan2013optimal,chen2014distributional,binetti2015scalable} and references therein). }

\re{Interesting optimality results have been obtained in some limited settings, such as deterministic worst-case settings~\cite{yao1995scheduling,bansal2007speed}, single server systems~\cite{panwar1988optimal,panwar1988optimality,bhattacharya1989optimal}, and/or heavy traffic settings \cite{lehoczky1997using,gromoll2007heavy}. For example, in heavy traffic settings, the dynamic behavior of discrete queueing systems can be approximated by a continuous-state process involving Brownian motion, for which there exist established tools to optimize~\cite{lehoczky1997using,gromoll2007heavy}. On the other hand, optimizing queueing systems without continuous-state approximations remains to be a hard problem. Particularly, the problem of designing \textit{optimal} algorithms that minimize service capacity variability while achieving high service quality has remained open. Solving this problem is a challenging task due to the heterogeneity of jobs (diversity in demands and deadlines) and the size of the state and decision space (of possible configurations on existing job profiles and the set of feasible scheduling policies).}

\re{However, the problem of designing \textit{optimal} algorithms that minimize service capacity variability while achieving high service quality has remained open.  Solving this problem is a challenging task due to the heterogeneity of jobs (diversity in service requests) and the size of the state and decision space (numbers of possible configurations on existing job profiles and the set of feasible control policies).  In particular, the only optimality results that have been obtained to this point are in niche settings such as a static single server system~\cite{panwar1988optimal,panwar1988optimality,bhattacharya1989optimal} and deterministic worst-case settings~\cite{yao1995scheduling,bansal2007speed}. }

\section{Model description}
\label{sec:optimization_problems}

The goal of this paper is to characterize the online scheduling policies that minimize service capacity variance, mean square, and both subject to service quality constraints for systems with the ability to dynamically adjust their service capacity. We use a continuous time model, where $t \in \hT = [0, T]$ denotes a point in time and $T \geq 0$ is the (potentially infinite) time horizon. Each job, indexed by $k \in \hV = \{ 1, 2, \cdots\}$, is characterized by an arrival time $\at_k$, a service demand $\e_k$, a sojourn time $\st_k $, a unit cost for unsatisfied demand $\C_k$, and a unit cost for deadline extension $\D_k$. Given the arrival time $\at_k$ and the sojourn time $\st_k$, the deadline of job $k$ is defined to be $\at_k + \st_k$. Before we formulate the scheduler design problem, we first introduce below the arrival profiles, the service profiles, and the design objectives.

\textit{Arrival profiles.}
We represent the set of jobs as a marked point process $\{(\at_k;  \e_k, \st_k, \C_k, \D_k )\}_{k \in \hV}$ in space $\hT \times S \times C$, where the arrival times $\at_k \in \hT$ are the set of points, and the service requirements $(\e_k, \st_k ) \in S$ and costs for unmet requirements $(\C_k, \D_k ) \in C$ are the set of marks.\footnote{Here, we use $(\at;  \e, \st, \C, \D)$ to denote the random variables and $(\at_k;  \e_k, \st_k. \C_k, \D_k)$ to denote one realization of them in job $k$.} We assume that the point process is an independently marked Poisson point process, which is defined by an intensity function $\tilde \pint (a)$  on $\hT$ and a mark joint density measure $f_a(\e, \st) g_a( \C ) h_a (\D) $ on $S \times C$~\cite{baccelli}. This also implies that $\{(\at_k;  \e_k, \st_k )\}_{k \in \hV}$ is a Poisson point process on $\hT \times S$ with the intensity function
$
\pint(\at,  \e, \st) =   \tilde \pint(\at)f_\at(\e, \st) .
$
Intuitively, the intensity function is the average rate at which jobs with service requirement $(\e,\st)$ arrive at time $a$. When both $\tilde \pint(\at) \equiv \tilde \pint $ and $f_\at(\e, \st) \equiv f(\e, \st)$ do not depend on $\at$, we say that the arrival distribution is stationary . For a stationary  arrival distribution, the intensity function of the Poisson point process simplifies to $\pint f(\e,\st)$.  We focus on stationary  arrival processes in Section \ref{sec:stationary_systems} and then generalize our results to non-stationary arrivals in Section \ref{sec:nonstationary}.  Throughout, we assume that the service demand $\e$ and the sojourn time $\st$ has finite first and second moments, $S$ is bounded, and $S \subset \{ (\e, \st) :  \st \geq \e \text{ and }  \e \geq 0  \}$.\footnote{\label{note0} The condition $\st \geq \e$ constrains the service demand $\e$ of a job to be no more than the amount of service that can be provided within its sojourn time $\st$.}  

\textit{Service profiles.} The service system works on each job $k \in \hV$ with a \emph{service rate} $r_k(t)$, which is an integrable function of $t$. The service rate can take any non-negative values that are smaller than the maximum rate $\bar r$, and without loss of generality, we assume that $\bar r =1$, \ie
\begin{align}
\label{eq:rate_constraints}
r_k(t) \in [0, \;1].
\end{align} 
To meet the demand requirements, the service rate must satisfy
\begin{align}
&\int_{\at_k}^{\infty} r_k (t) dt = \e_k,  & k\in \hV \label{eq:demand_constraints}. 
\end{align}
To meet the deadline requirements, it also need to satisfy
\begin{align}
&r_k (t) \leq  \mathbf{1}\{  \at_k \leq  t < \at_k+\st_k      \}  , &  k\in \hV \label{eq:deadline_constraints}. 
\end{align}
where $\mathbf{1}\{A\}$ denotes the indicator function for an event $A$. 

The service rate also determines three important quantities associated with costs: service capacity, the amount of unsatisfied demands, and the amount of deadline extensions. The service capacity is the instantaneous resource consumption of the system, given by 
\begin{equation}\label{eq:power_def}
P(t) = \sum_{k\in \hV} r_k(t).
\end{equation}
We assume that $P(t)$ has no upper bound, implying that there is always enough capacity to serve the jobs. 
The total penalty for unmet demands of jobs with deadline $t$ is 
\begin{equation}
\U(t) =  \sum_{ k  \in \hV : \at_k + \st_k = t } \C_k ( \e_k -  \he_k ),
\end{equation}
where $\he_k = \int_{\at_k}^{\infty} r_k (t) dt$ is defined to be the unsatisfied demands of job $k$. The total penalty for deadline extensions of jobs with deadline $t$ is 
\begin{equation}
\Q(t) =  \sum_{ k  \in \hV : \at_k + \st_k = t } \D_k ( \hst_k -  \st_k ).
\end{equation}
where $\hst_k = \max\{ t -\at_k : r_k(t) >0 \}$ is defined to be the actual sojourn time of job $k$.


\textit{Design objectives.} We consider designing \textit{online} scheduling algorithms, which decide the service rates in real-time without using the future job arrival information. For scalability, we restrict our attention to \textit{distributed} algorithms which only need local information about each job to decide its service rate. Examples of online distributed algorithms are \emph{Immediate Scheduling}, \emph{Delayed Schedule}, and \emph{Exact Scheduling} (see Figure~\ref{fig:charging_examples}).

\re{Recall from Section \ref{sec:intro} that the predictability and stability of service capacity are important design criteria for modern schedulers because peaks and fluctuations in service capacity strain the system infrastructure and knowing the future demand of the service capacity help reduce cost. Thus, our design objective is to reduce the variance and mean square in service capacity for the settings with strict or soft service constraints. Specifically, we consider the optimization problem
\begin{align}
& \text{minimize} \;\;\;\;\; \frac{1}{T} \int_{0}^{T} \Big( \alpha    \E [ P(t) ]^2 + \beta \var (P(t) )  \Big) dt
\end{align}
where the first term of the integrand quantifies the service capacity stability, and the second term quantifies the service capacity predictability. The coefficient $\alpha, \beta (\geq 0)$ balances the predictability and stability of $P(t)$, and the objective function reduces to the time average of $ \E[ P(t)^2 ]$ at $(\alpha , \beta ) = (1, 1)$.} 

\re{We also consider the case when the service requirements do not need to be perfectly satisfied. In such cases, we consider the optimization problem
\begin{align}
& \text{minimize} \;\;\;\;\;   \frac{1}{T} \int_{0}^{T} \Big( \var (P(t) )  + \E[\U(t) ]+ \E[\Q(t)] \Big) dt ,
\end{align}
which balances service capacity variance with the penalties for not meeting the demands and/or deadlines of some jobs.}

\begin{figure*}[h!]
	\begin{center}
		\begin{tikzpicture}[scale=0.4]
		\tikzset{font=\footnotesize}
	
		\draw [->] (0,0) -- (9,0);
		\node at (9.5,0) {$t$};
		\draw [->] (0,0) -- (0,7.9);
		\node at (0,-0.5) {$\at_k$};
		\node at (-0.5,7) {$\tau_k$};
		\node at (-0.5,4) {$\sigma_k$};
		\node at (7,-0.5) {$\at_k+\tau_k$};
		\draw [dotted,blue,->] (0,4) -- (4,0);
		\node at (4,-0.5) {$\at_k+\sigma_k$};
		\draw [black,thick,->] (0,4) -- (4,0); 
		\node at (4.5,-2.7) {$r_k(t)= \mathbf{1} \{\at_k \leq t < \at_k+ \sigma_k \}$};
		\node at (0,8.5) {$x(t)$};
		\node at (4.5,-1.7) {Immediate Scheduling};
		\end{tikzpicture}\quad \quad
		\begin{tikzpicture}[scale=0.4]
		\tikzset{font=\footnotesize}

		\draw [->] (0,0) -- (9,0);
		\draw [dotted] (0,7) -- (7,0);
		\node at (9.5,0) {$t$};
		\draw [->] (0,0) -- (0,7.9);
		\node at (0,-0.5) {$\at_k$};
		\node at (-0.5,7) {$\tau_k$};
		\node at (-0.5,4) {$\sigma_k$};
		\node at (7,-0.5) {$\at_k+\tau_k$};
		\draw [black,thick,->] (0,4) -- (3,4) -- (7,0);
		\node at (4,-0.5) {$\at_k+\sigma_k$};
		\node at (4.5,-2.7) {$r_k(t)= \mathbf{1} \{\at_k + (\tau_k - \sigma_k) \leq t < \at_k+ \tau_k \}$};
		\node at (0,8.5) {$x(t)$};
		\node at (4.5,-1.7) {Delayed Scheduling};
		\end{tikzpicture}\quad \quad
		\begin{tikzpicture}[scale=0.4]
		\tikzset{font=\footnotesize}

		\draw [->] (0,0) -- (9,0);
		\node at (9.5,0) {$t$};
		\draw [->] (0,0) -- (0,7.9);
		\node at (0,-0.5) {$\at_k$};
		\node at (-0.5,7) {$\tau_k$};
		\node at (-0.5,4) {$\sigma_k$};
		\node at (7,-0.5) {$\at_k+\tau_k$};
		\draw [black,thick,->] (0,4) -- (7,0);
		\node at (4,-0.5) {$\at_k+\sigma_k$};
		\node at (4.5,-2.7) {$r_k(t)=\frac{\sigma_k}{\tau_k}\mathbf{1}\{\at_k \leq   t < \at_k+ \st_k \}$};
		\node at (0,8.5) {$x(t)$};
		\node at (4.5,-1.7) {Exact Scheduling};
		\end{tikzpicture}\quad \quad
		\begin{tikzpicture}[scale=0.4]
		\tikzset{font=\footnotesize}

		\end{tikzpicture}
	\end{center}
	\caption{Examples of distributed scheduling algorithms. The solid black lines represent the remaining demand $x(t)$ at time $t$. Immediate Scheduling works by serving jobs at full rate upon arrival. Delayed Scheduling works by serving at full rate with a delay that is equal to its laxity $\at + \st -\e$. 
Exact Scheduling works by throttling service to a constant rate $ \sigma/\tau$ so that all jobs are completed exactly at its deadline.}
	\label{fig:charging_examples}
\end{figure*}
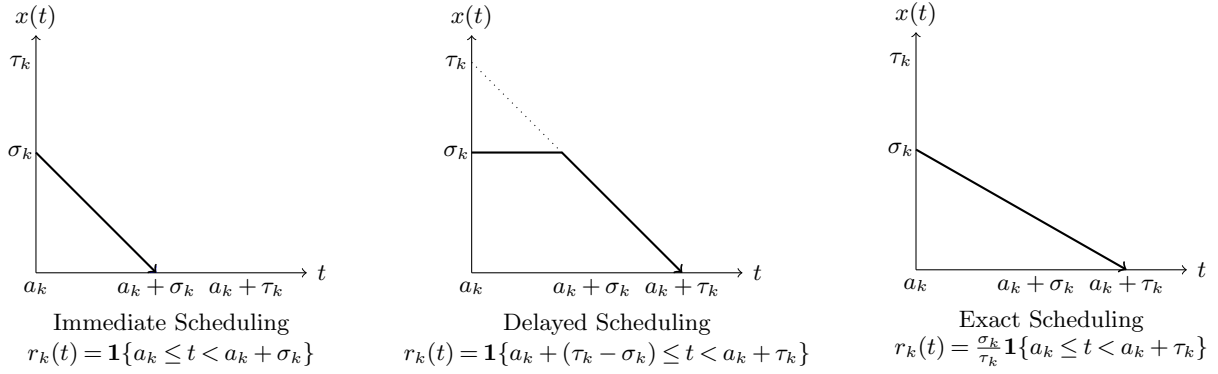

\textit{Motivating examples.} 
{\color{black}
The general model we have defined is meant to give insight into the design trade-offs that happen in applications with dynamic capacity, \eg electric vehicle charging, cloud content providers. Importantly, in this paper we are not trying to model a
specific application, rather we are exploring design trade-offs using a simple, general model. However, to highlight the connection to our motivating examples, consider first the case of electric vehicle charging. In this case, each job $k \in \hV$ corresponds to an electric vehicle with an arrival time $\at_k$, an energy demand $\sigma_k$, and a sojourn time $\tau_k$. At each time $t$, the charging station draws $P(t) = \sum_{k \in \hV} r_k(t)$ amount of power from the grid to provide each vehicle $k$ with a charging rate of $r_k(t)$. When doing so, stable resource usage is highly desirable because fluctuations and large peaks in $P(t)$ can strain the grid and results in a high peak charge for station operators. Moreover, predictable resource usage is also important when purchasing energy from the day-ahead market, whose price is lower and less volatile than that of the real-time market. Note that our model assumes $P(t)$ is unbounded and, thus corresponds to a setting where there are more charging stations
than arriving cars. 


In the case of cloud content providers, each job $k \in \hV$ corresponds to a task (requested to the cloud or data centers) with an arrival time $\at_k$, a work requirement $\sigma_k$, and an allowable waiting time $\tau_k$. The service system works on job $k$ with speed $r_k(t)$ using $P(t) = \sum_{k \in \hV} r_k(t)$ number of computers (or amount of power). Here again, a good estimate of the future resource use enables the cloud users to reserve resources through a long-term contract, whose price is lower and less volatile than that of a short-term contract, suggesting the benefit of having a predictable resource use. Note that our model considers the case
where $P(t )$ is unbounded and, thus, the data center has enough capacity to avoid congestion, \ie is in low utilization. Such periods are common, since data centers often operate at utilizations as low as 10$\%$ \cite{greenberg2008cost}. For
future work, it is important to study how to manage congested periods by considering an upper bound on $P(t )$.

In this paper, we primarily focus on the cases when the arrival times, demands, and sojourn times are all available to the scheduler upon the arrival of each job. Such cases are common in many scheduling problems and modern applications operating on an increasingly smarter infrastructure~\cite{brucker2007scheduling,testbed,lucier2013efficient,gan2013optimal,towsley1989,yao1995scheduling,bansal2007speed}. For example, in the electric vehicle charging testbed~\cite{testbed}, the users input the service request ($\sigma_k$, and $\tau_k$) through a control panel upon arrival. In cloud computing, the demands can be estimated from the past, and deadlines are determined by operational/performance requirements~\cite{lucier2013efficient}. Beyond the case of our primary focus, there are also situations when the information on demands and deadlines cannot be accessed for some or all jobs. For such cases, we discuss the algorithm to be used and its performance analysis in Section \ref{sec:emp}.

}

\section{Maximizing predictability under stationary job arrivals}
\label{sec:stationary_systems}

In this section, we characterize optimal distributed scheduling policies in a wide range of objectives when job arrivals are stationary, starting with the simplest and moving toward the most complex. To begin, we define each setting and pose the scheduler design problems as constrained functional optimizations (Section \ref{sec:stationary_model}). Then, we focus on strict service requirements and show that Exact Scheduling minimizes the stationary variance of the service capacity (Section \ref{sec:hard}). Relaxing the demand requirements, we show that a variation of Exact Scheduling minimizes the weighted sum of the stationary variance of service capacity and the penalty for unsatisfied demand (Section \ref{sec:soft_demand}). Relaxing the deadline requirements, we show that a different variation of Exact Scheduling minimizes the weighted sum of both the stationary variance of service capacity and the penalty for demand extension (Section \ref{sec:soft_demand}). Finally, we consider the case when both the demand and deadline requirements are relaxed (Section \ref{sec:soft_demand_deadline}) and show that the optimal policy can be constructed from an integration of the above optimal policies. It is interesting that all optimal algorithms admit closed-form expressions, which provide clear interpretations and insights regarding the optimal trade-offs between reducing service capacity variability, satisfying the demands, and meeting deadlines. Moreover, they are also highly scalable and easy to implement.

\subsection{Problem formulation} 
\label{sec:stationary_model}
\re{We study the settings when the arrival process is an independently marked \textit{stationary} Poisson point process. The intensity function of the process is
\begin{align}
\label{eq:arrival_intensity_stationary}
&\pint(\at,\e,\st)  = \pint  f(\e,\st)  , &\quad \at \in \hT , ( \e , \st ) \in S, 
\end{align}
where $\pint(\at,\e,\st)$ takes the same value for different $\at$ given fixed $\e,\st$.} We first consider the case when the unit cost for unsatisfied demands $\C_k$ and that for deadline extensions $\D_k$ are deterministic and homogeneous among different jobs, \ie $
\C_k = \C$, $\D_k = \D$ for any $k \in \hV$.\footnote{This assumption is relaxed in Corollary \ref{lem:stationary_softdemand+deadline}.} we consider distributed scheduling policies of the form
\begin{align}
\label{eq:u_stationary_simple}
r_k(t) = u(  \y_k(t)  , \x_k (t)  )  \geq 0
\end{align}
where $u: \Rp \times \R  \rightarrow \Rp$ is a non-negative integrable function of the remaining demand $ \y_k(t)$ and the remaining time $ \x_k(t)$ of job $k$. \re{This policy assumes that the system can access the information about the service demands and deadlines. This assumption holds, for example, in electric vehicle charging systems~\cite{testbed}.} The policy is also distributed in the sense that service rate of a job is determined using only its own information. We study policies of the form \eqref{eq:u_stationary_simple} assuming a situation where there is enough capacity available to satisfy the demand, and so the focus is on determining the optimal service rate for the jobs in a distributed manner.

\re{In the special case when Immediate Scheduling policy is used, the system becomes $M/G/\infty$ queue.\footnote{In general, the system under policy \eqref{eq:u_stationary_simple} may be different from $M/G/\infty$ queue because $M/G/\infty$ requires the service rate to be constant.} 
More generally, under any policy of the form \eqref{eq:u_stationary_simple}, the remaining job process $\{ (\y_k(t),\x_k(t)) : k \in \hV, \at_k \leq t \}$ can be represented as a point process in a two-dimensional space of remaining times and remaining demands \cite{baccelli}. As $t\rightarrow \infty$, the process $\{ (\y_k(t),\x_k(t)) : k \in \hV, \at_k \leq t \}$ converges to a stationary point process whose distribution is determined by the job profiles and scheduling policy. Moreover, it is a Poisson process in the space with mean measure $ \fint( \y  , \x )$ satisfying:}\footnote{We use $(\y,\x)$ to denote the coordinate in the two dimensional space of remaining demands and remaining times and $(\y_k(t),\x_k(t))$ to denote a point (job profile) in the space at time $t$.}
	\begin{align}
	\label{eq:conservation}
	0 = \frac{\partial }{\partial \y} (  \fint (\y,\x) u (\y,\x) )  + \frac{\partial }{\partial \x} \fint (\y,\x)  + \pint  f(\y,\x) . 
	\end{align}
\re{The above equation is also known as the continuity equation and can be derived from the movement and conservation of density of the Point Process~ \cite{pedlosky2013geophysical,zeballos2019proportional}. The movement of each point $(\y  , \x )$ has velocity $-u(\y,\x)$ in the $\y$-dimension and velocity $-1$ in the $\x$-dimension because its remaining demand is reduced by $u(-\y, \x)$ per unit time, and its remaining time is reduced by $1$ per unit time. The conservation of density states that the flow inward (of existing jobs) and new arrivals minus flow outward through the surface of a region sum up to be zero. }

\re{Because the remaining job process becomes stationary as $t\rightarrow \infty$, the distribution of $P(t)$ also becomes stationary. Moreover, its stationary mean $\E[ P ]$ is determined only by the total service provided. For example, in the special case when the demand constraints are to be strictly satisfied, we have $\E[ P ] =\pint  \E[\e]$. In a more general setting, the stationary mean is given in the following proposition. 
}
\begin{proposition}
\label{prop:poisson_P}
Consider a service system with a stationary Poisson arrivals with intensity measure $\pinti f(\e,\st) $ and a distributed scheduling policy of the form \eqref{eq:u_stationary_simple}. Let us define $\he(\e, \st)$ to be the total service a job with demand $\e$ and a sojourn time $\st$ receives.\footnote{Here, $\e, \st$ are random variables, and $\he(\e, \st)$ is the output of the function with input $(\e, \st)$. So $\he(\e, \st)$ is also a random variable.} The stationary mean of $P(t)$ is given by 
\begin{align}
\label{eq:st_meanP_10}
\E[ P(t) ] = \pint  \E[\he(\e,\st)] . 
\end{align}
\end{proposition}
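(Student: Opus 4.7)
The plan is to decompose $\mathbb{E}[P(t)]$ into contributions from each Poisson arrival indexed by its age since arrival, and then apply Campbell's formula for independently marked Poisson processes.

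The key observation is that, under the distributed and time-homogeneous policy \eqref{eq:u_stationary_simple}, a job with mark $(\sigma_k,\tau_k)$ and arrival time $a_k$ follows a deterministic post-arrival trajectory $(y_{\sigma_k}(s), x_{\sigma_k}(s))$ for elapsed time $s = t - a_k \in [0,\tau_k]$, obtained by integrating $\dot y = -u(y,x)$, $\dot x = -1$ from the initial condition $(\sigma_k,\tau_k)$. In particular, $r_k(t) = u(y_{\sigma_k}(t-a_k), x_{\sigma_k}(t-a_k)) \mathbf{1}\{a_k \leq t \leq a_k+\tau_k\}$, and by definition the total service received by the job is $\hat\sigma(\sigma_k,\tau_k) = \int_0^{\tau_k} u(y_{\sigma_k}(s), x_{\sigma_k}(s))\,ds$.

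Taking expectations of $P(t) = \sum_k r_k(t)$ and applying Campbell's formula to the marked Poisson process of arrivals with intensity $\Lambda f(\sigma,\tau)$ on $\hT \times S$ (in the stationary regime where arrivals from the distant past may also contribute) gives
\[
\mathbb{E}[P(t)] \;=\; \Lambda \int_{-\infty}^{t}\!\!\int_S u(y_\sigma(t-a), x_\sigma(t-a))\, \mathbf{1}\{a+\tau \geq t\}\, f(\sigma,\tau)\, d(\sigma,\tau)\, da.
\]
Changing the outer variable to $s = t-a$ and using Fubini's theorem, the constraint $a \leq t \leq a+\tau$ becomes $s \in [0,\tau]$, yielding
\[
\mathbb{E}[P(t)] \;=\; \Lambda \int_S \Big(\int_0^\tau u(y_\sigma(s), x_\sigma(s))\, ds\Big) f(\sigma,\tau)\, d(\sigma,\tau) \;=\; \Lambda\, \mathbb{E}\bigl[\hat\sigma(\sigma,\tau)\bigr],
\]
where the last equality recognizes the inner integral as $\hat\sigma(\sigma,\tau)$. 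This establishes \eqref{eq:st_meanP_10}.

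The only point requiring care is the exchange of sum and expectation (equivalently, the application of Campbell's formula and Fubini); this is routine because the rate bound $u \leq 1$ combined with the boundedness of $S$ ensures that each job contributes at most $\tau_{\max}$ of total service, and at any fixed time $t$ only jobs with $a \in [t-\tau_{\max}, t]$ can be active, which is a finite-mean set. No separate appeal to the continuity equation \eqref{eq:conservation} or to the stationarity of the remaining-job process is needed: the formula follows directly from the Poisson structure of arrivals together with the deterministic post-arrival dynamics induced by \eqref{eq:u_stationary_simple}.
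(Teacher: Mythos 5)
Your proof is correct, but it takes a genuinely different route from the paper's. The paper works with the stationary intensity $\fint(\y,\x)$ of the remaining-job point process: it writes $\E[P]=\int\!\!\int \fint(\y,\x)\,u(\y,\x)\,d\y\,d\x$, integrates by parts in $\y$, and substitutes the continuity equation \eqref{eq:conservation} to convert the divergence terms into the boundary fluxes $\pint\,\E[\e]$ (arrivals) and $\pint\,\E[\e-\he(\e,\st)]$ (unserved demand at departure), whose difference gives \eqref{eq:st_meanP_10}. You bypass the stationary measure entirely: you observe that a feedback policy of the form \eqref{eq:u_stationary_simple} induces, for each mark $(\e,\st)$, a deterministic post-arrival trajectory (by integrating $\dot{\y}=-u(\y,\x)$, $\dot{\x}=-1$), so that $r_k(t)$ is a deterministic function of $(\at_k,\e_k,\st_k)$ and the first-moment Campbell formula applies directly to the arrival process; the substitution $s=t-\at$ then collapses the integral over arrival times into the per-job total service $\he(\e,\st)$. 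This is essentially the mechanism behind Lemma \ref{prop:poisson} for policies of the form $v(\e,\st,\x)$, and your key observation --- that any $u(\y,\x)$ policy is realizable as a $v(\e,\st,\x)$ policy along its own trajectory --- is what makes that mechanism applicable here. Your route is more elementary and self-contained (no need to justify \eqref{eq:conservation} or the convergence of the remaining-job process to a stationary spatial Poisson process), and the integrability check you give is adequate since $u\leq 1$ and $S$ is bounded; the paper's route has the advantage of expressing $\E[P]$ directly against $\fint(\y,\x)$, the object it manipulates throughout Section \ref{sec:stationary_systems}. Two minor caveats: your indicator $\mathbf{1}\{\at_k\leq t\leq \at_k+\st_k\}$ presumes service stops at the deadline, whereas the proposition covers general policies of the form \eqref{eq:u_stationary_simple} (including soft-deadline ones that serve past $\x=0$) --- the fix is cosmetic, since extending the inner integral to the actual completion time still yields $\he(\e,\st)$ by definition; and the claim that the trajectory is deterministic tacitly assumes the ODE $\dot{\y}=-u(\y,\x)$ has a unique solution, an assumption the paper also makes implicitly.
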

We present a proof of Proposition \ref{prop:poisson_P} in Appendix \ref{sec:poisson_P_proof}. Alternatively, it can also be derived from classical queueing results such as Little's Law and the Brumelle's formula~\cite[Chapter 3, eq. (3.2.1)]{baccelliBremaud}. 

As the stationary mean of $P(t)$ does not depend on the specific form of the policy \eqref{eq:u_stationary_simple}, we consider minimizing the stationary variance of $P(t)$ under strict service constraints, soft demand constraints, soft deadline constraints, and soft demand and deadline constraints. In the case of \textit{strict demand constraints}, we consider the following optimization problem:
\begin{align}\label{eq:min_variance_st}
& \underset{u : \eqref{eq:rate_constraints} \eqref{eq:demand_constraints}\eqref{eq:deadline_constraints}\eqref{eq:u_stationary_simple}\eqref{eq:conservation}}{\text{minimize}} \;\;\;\;\;  \var(P),
\end{align}
where the optimization variable taken over the set of distributed policies \eqref{eq:u_stationary_simple} subject to the service rate constraints \eqref{eq:rate_constraints}, the demand constraints \eqref{eq:demand_constraints}, and the deadline constraints \eqref{eq:deadline_constraints}. Here, $\var( P )$ is a functional of $u$ and $ \fint( \e  , \st)$, where $ \fint( \e  , \st)$ satisfies \eqref{eq:conservation}.

In the case of \textit{soft demand constraints}, we relax the demand constraints \eqref{eq:demand_constraints} into paying penalty $\C_k = \C$ for each unit of unsatisfied demands and consider balancing the service capacity variance and the penalties due to unsatisfied demands:
\begin{align}
\label{eq:min_variance+softdemand_st}
& \underset{u : \eqref{eq:rate_constraints}\eqref{eq:deadline_constraints}\eqref{eq:u_stationary_simple}\eqref{eq:conservation}}{\text{minimize}} \;\;\;\;\;\;\;\;  \mathrm{Var}(P)+  \E[ \U ]  . 
\end{align}

In the case of \textit{soft deadline constraints}, we relax the deadline constraints \eqref{eq:deadline_constraints} into paying penality $\D$ for each unit of deadlines extensions and consider balancing the service capacity variance and the penalties due to deadline extensions:
\begin{align}
\label{eq:min_variance+softdeadline_st}
& \underset{u : \eqref{eq:rate_constraints}\eqref{eq:demand_constraints} \eqref{eq:u_stationary_simple}\eqref{eq:conservation}}{\text{minimize}} \;\;\;\;\;\;\;\;  \mathrm{Var}(P)+  \E[ \Q ]  .
\end{align}

In the case of \textit{soft demand and deadline constraints}, we relax both the demand and deadline requirements \eqref{eq:demand_constraints} and \eqref{eq:deadline_constraints} into paying $\C$ for each unit of unsatisfied demands and $\D$ for each unit of deadline extensions. We consider balancing the service capacity variance and the penalties due to unsatisfied demands and deadline extensions:
\begin{align}
\label{eq:min_variance+softdemand+softdeadline_st}
& \underset{u : \eqref{eq:rate_constraints} \eqref{eq:u_stationary_simple}\eqref{eq:conservation} }{\text{minimize}} \;\;\;\;\;\;\;\;  \mathrm{Var}(P)+   \E[ \U ] + \E[ \Q ]  .
\end{align}

Finally, we consider the most general setting, when the penalties for unsatisfied demands and deadlines are heterogeneous among jobs. To account for this heterogeneity, we consider distributed scheduling policies of the form
\begin{align}
\label{eq:u_stationary_simple_marked}
r_k(t) = \uind(  \y_k(t)  , \x_k (t) , \C_k ,\D_k  )  \geq 0 .
\end{align}
Under any policy of the form \eqref{eq:u_stationary_simple_marked}, the remaining job profiles in the system $\{ (\y_k(t),\x_k(t) ,\C_k ,\D_k  ) : k \in \hV, \at_k \leq t \}$ can be represented as a point process in the 4-dimensional space of remaining times, remaining demands, unit costs for unsatisfied demand, and unit costs for deadline extension. This point process converges to a stationary Spatial Poisson Point Process with an intensity function $\lambda(\y, \x, \C ,\D)$ satisfying  
\begin{align}
\label{eq:conservation_hete_penality}
&0 = \frac{\partial }{\partial \y} (  \fint (\y,\x,\C ,\D) \bar u (\y,\x,\C ,\D) )  + \frac{\partial }{\partial \x} \fint (\y,\x,\C ,\D)  + \pint  f(\y,\x) g(\C) h(\D).
  \end{align}
This leads to the following optimization problem: 
\begin{align}
\label{eq:min_variance+softdemand+softdeadline_st_marked}
\underset{\uind: \eqref{eq:rate_constraints}\eqref{eq:u_stationary_simple_marked}\eqref{eq:conservation_hete_penality} }{\text{minimize}}& \;\;\;\var (P(t) ) + \E\left[ \U(t) \right] + \E\left[ \Q(t) \right]  .
\end{align}

\subsection{Strict demand and deadline requirements}
\label{sec:hard}
 We first consider the case of strict service requirements and show a closed-form characterization of the optimal algorithm that minimizes the stationary variance $\var(P)$. To do so, it is worth noticing from Lemma \ref{prop:poisson} that service rates contribute to the service capacity variance in a quadratic form. Thus, having a large value in the service rate, \ie $u( \y , \x )$ taking large values for some $( \y , \x )$, results in disproportionately more service capacity variance. This observation suggests that having a flat service rate may achieve small variance. One such policy is \textit{Exact Scheduling}, 
 \begin{align}
\label{eq:exact_scheduling_st}
u( \y , \x )  = \begin{dcases} 
\frac{ \y }{ \x},  & \text{if } \x > 0 ,  \\
0 , & \text{otherwise}. 
 \end{dcases} 
\end{align}
which works by finishing each job \textit{exactly} at its deadline using a constant service rate (Figure~\ref{fig:ES}). It is also highly scalable because it is distributed, and it does not require much computation,  memory use, communication, or synchronization. Although existing literature has analyzed its performance in various settings~\cite{liu1973scheduling,lehoczky1989rate,buttazzo2011hard,ferragut2017controlling}, no work has shown its optimality conditions. In this section, we show that Exact Scheduling minimizes the stationary service capacity variance under strict demand and deadline constraints.

\begin{figure}
\begin{center}
\begin{tikzpicture}[scale=1.2]
 \draw [->] (-.2,0) -- (4.5,0);
 \node at (2.25,-.5) {Remaining demand ($\y$)};
 \draw [->] (0,-.2) -- (0,4);
 \node [rotate=90,centered] at (-.5,2) {Remaining time ($\x$)};
 \draw [blue,fill=green] (1,3.5) circle [radius=0.08];
 \draw [blue,thick,->] (1,3.5)--(.92,3.17);
  \draw [dotted,thin] (0,0)--(.92,3.17);
\draw [blue,fill=green] (1,2) circle [radius=0.08];
 \draw [blue,thick,->] (1,2)--(.85,1.75);
 \draw [dotted,thin] (0,0)--(.85,1.75);
 \draw [blue,fill=green] (3,4) circle [radius=0.08];
 \draw [blue,thick,->] (3,4)--(2.8,3.75);
 \draw [dotted,thin] (0,0)--(2.8,3.75);
 \draw [blue,fill=green] (2.5,2.8) circle [radius=0.08];
 \draw [blue,thick,->] (2.5,2.8)--(2.3,2.55);
  \draw [dotted,thin] (0,0)--(2.3,2.55);
  \draw [opacity=0, fill opacity=0.1, fill=gray] (0,0) -- (4,4) -- (4.5,4) -- (4.5,0) -- cycle;
  \draw [gray] (0,0) -- (4,4);
  \node [above left, anchor=south east, align=center] at (4,0) {Infeasible\\ region};
 \end{tikzpicture}
 \end{center}
\caption{\emph{Exact scheduling depicted in the space of remaining demand $\y$ and remaining time $\x$.}}
 \vspace{-5mm}
\label{fig:ES}
\end{figure}
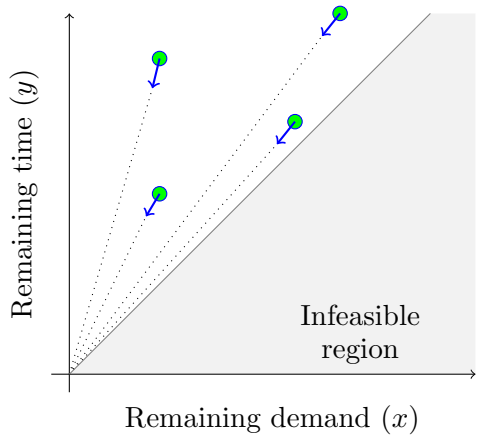

\begin{theorem}
\label{thm:1_stationary_problem1}
Exact Scheduling \eqref{eq:exact_scheduling_st} is the optimal solution of \eqref{eq:min_variance_st} and achieves the optimal value\footnote{Observe that $\pinti $ is the cumulative arrival rate.}
\begin{align}
\label{eq:1_stationary_problem1}
&\var(P) = \pint   \E\left[ \frac{ \e^2 }{ \st } \right] .
\end{align}
\end{theorem}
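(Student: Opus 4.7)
The plan is to exploit the Poisson structure of the stationary remaining-job process to express $\var(P)$ as an integral over trajectories, and then to reduce the resulting optimization to a per-job Cauchy–Schwarz inequality. Under a distributed policy \eqref{eq:u_stationary_simple}, the stationary remaining-job process is a Poisson point process on the $(\y,\x)$-plane with intensity $\fint(\y,\x)$ satisfying the continuity equation \eqref{eq:conservation}. Since $P = \sum_k u(\y_k,\x_k)$ is a functional of the points of this Poisson process, Campbell's theorem gives
\begin{align}
\var(P) = \iint u(\y,\x)^{2}\, \fint(\y,\x)\, d\y\, d\x.
\end{align}

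Next, I parametrise $\fint$ along the characteristics of \eqref{eq:conservation}, which coincide with the trajectory of an individual job: a job of type $(\e,\st)$ starts at $(\y,\x)=(\e,\st)$ and evolves along $\dot{\y} = -u(\y,\x)$, $\dot{\x} = -1$ until one coordinate hits zero. A change of variables from $(\y,\x)$ to (type $(\e,\st)$, time-since-arrival $s$) converts the state-space integral into an expectation over trajectories,
\begin{align}
\var(P) = \pint\, \E_{(\e,\st)\sim f}\!\left[\int_{0}^{T(\e,\st)} u(\y(s),\x(s))^{2}\, ds\right],
\end{align}
where $T(\e,\st) \leq \st$ is the dwell time of a type-$(\e,\st)$ job, and the strict demand constraint \eqref{eq:demand_constraints} enforces $\int_{0}^{T(\e,\st)} u(\y(s),\x(s))\, ds = \e$.

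The problem now decouples across job types. For each $(\e,\st)$, I minimise $\int_{0}^{T} u(s)^{2}\, ds$ over $u:[0,T]\to[0,1]$ with $\int_{0}^{T} u(s)\, ds = \e$ and $T \leq \st$. Cauchy–Schwarz yields
\begin{align}
\e^{2} = \left(\int_{0}^{T} u(s)\, ds\right)^{\!2} \leq T \int_{0}^{T} u(s)^{2}\, ds \leq \st \int_{0}^{T} u(s)^{2}\, ds ,
\end{align}
so $\int_{0}^{T} u^{2}\, ds \geq \e^{2}/\st$, with both inequalities tight iff $T = \st$ and $u(s) \equiv \e/\st$. Because $S \subset \{(\e,\st):\st \geq \e \geq 0\}$, the value $\e/\st$ automatically lies in $[0,1]$, so the rate constraint \eqref{eq:rate_constraints} is satisfied. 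Taking expectations over $(\e,\st)\sim f$ gives $\var(P) \geq \pint\,\E[\e^{2}/\st]$, with equality attained by Exact Scheduling \eqref{eq:exact_scheduling_st}, proving both the optimality claim and the formula \eqref{eq:1_stationary_problem1}.

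The main technical step to justify carefully is the trajectory-based rewriting of the variance. The continuity equation \eqref{eq:conservation} is a linear transport PDE whose characteristics are precisely the job trajectories; the source term $\pint f(\y,\x)$ injects mass at rate $\pint f(\e,\st)$ at each type $(\e,\st)$, and the unit speed $\dot{\x} = -1$ makes the Jacobian of the change of variables equal to one, so $\fint(\y,\x)\, d\y\, d\x$ pushes forward to $\pint f(\e,\st)\, d\e\, d\st\, ds$ along a characteristic with $s \in [0, T(\e,\st)]$. The only subtlety is that jobs completing their demand strictly before their deadline (reaching $\y = 0$ with $\x > 0$) stop contributing, which is encoded by the finite dwell time $T(\e,\st) \leq \st$; but the Cauchy–Schwarz bound shows the optimum anyway avoids this case by taking $T = \st$. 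Once the parametrisation is established—mirroring the derivation behind Proposition~\ref{prop:poisson_P}—the remainder is just the elementary Cauchy–Schwarz argument above.
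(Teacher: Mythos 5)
Your proof is correct and follows essentially the same route as the paper's: both reduce $\var(P)$ to the per-trajectory integral $\pint\,\E\bigl[\int_0^{\st} r(s)^2\,ds\bigr]$ via Campbell's theorem and then apply Cauchy--Schwarz (the paper invokes it as H\"older's inequality) against the demand constraint $\int_0^{\st} r(s)\,ds=\e$, with the per-type minimizer $r\equiv\e/\st$ realized by $u(\y,\x)=\y/\x$. The one structural difference is how the trajectory representation of the variance is justified: the paper sidesteps your characteristics/change-of-variables argument by first relaxing the policy class from $u(\y_k(t),\x_k(t))$ to $v(\e_k,\st_k,\x_k(t))$ (Lemma \ref{thm:static_problem}), for which the trajectory parametrization is explicit and Lemma \ref{prop:poisson} applies directly, and then observing that the relaxed optimum happens to be realizable in the original class. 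Your version instead pushes the stationary intensity $\fint$ forward along the characteristics of \eqref{eq:conservation}; this is the content of Brumelle's formula and is sound, but it implicitly requires the trajectory ODE $\dot\y=-u(\y,\x)$ to be well-posed for an arbitrary candidate policy, a technicality that the paper's relaxation avoids entirely while also yielding the slightly stronger conclusion that the richer class $v(\e,\st,\x)$ buys no additional performance.
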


\re{Theorem \ref{thm:1_stationary_problem1} shows that the optimal policy for minimizing variance is to keep a constant service rate at all times. Therefore, when considering strict demands an deadlines, the optimal policy is to have a flat service rate across its sojourn time $\st_k$.} Additionally, Theorem \ref{thm:1_stationary_problem1} shows the achievable performance improvement by controlling the service capacity using distributed algorithms. If no control is applied, then $r_k(t) = \mathbf{1}\{t \in [\at_k,\at_k+\sigma_k)\}$, and the stationary mean and variance of $P(t)$ is $\E(P) =\var(P)  = \pinti  \E[\e]$ By performing a distributed service capacity control, the stationary variance can be reduced by 
\begin{align}
\pinti  \E\left[ \frac{\e (\st - \e) }{\st} \right] \in \big[ 0, \pinti  \E[ \e]\big] 
\end{align}
where $\st - \e$ is a slack time (the amount of time left at job completion if a job is served at its maximum service rate).

Next, we present the proof of Theorem \ref{thm:1_stationary_problem1}. To circumvent the complex constraints of \eqref{eq:min_variance_st}, we first provide a lower bound on its optimal solution by relaxing the class of control policies into 
\begin{equation}
\label{eq:u_stationary_complex}
\rate_k(t) = v(   \e_k ,   \st_k , \x_k (t)   )  \quad k\in \hV,
\end{equation}
and solve the optimization problem 
\begin{align}
\label{eq:min_variance_st_relaxed}
\underset{v: \eqref{eq:rate_constraints} \eqref{eq:demand_constraints}\eqref{eq:deadline_constraints}\eqref{eq:u_stationary_complex}}{\text{minimize}}& \;\;\;\var (P ) .
\end{align}
\re{ Notice that any policy that can be realized by $u$ in \eqref{eq:u_stationary_simple} can also be realized by $v$ in \eqref{eq:u_stationary_complex}, but a policy that can be realized by $v$ may not necessarily be realized by $u$. Thus, policy $v$ is more general than $u$, and the constraint set of \eqref{eq:min_variance_st} is contained in the constraint set of \eqref{eq:min_variance_st_relaxed}.} Consequently, the optimal value of \eqref{eq:min_variance_st_relaxed} lower-bounds that of \eqref{eq:min_variance_st}.
Therefore, given the optimal solution of \eqref{eq:min_variance_st_relaxed}, if the solution of \eqref{eq:min_variance_st_relaxed} (given in the next lemma) is also achievable by a control policy $u$ of the form \eqref{eq:u_stationary_simple}, it must be the optimal solution of \eqref{eq:min_variance_st} as well. 

\begin{lemma}
\label{thm:static_problem}
The optimal solution of \eqref{eq:min_variance_st_relaxed} is  
\begin{align}
\label{eq:exact_policy_2}
 v(   \e,   \st , \x   )   = \frac{ \e }{   \st } \mathbf{1}\{ \x > 0\},
\end{align}
and it yields the optimal value
\begin{align}
\var (P(t) ) =\Lambda \E\left[ \frac{ \e^2 }{ \st } \right] .
\end{align}
\end{lemma}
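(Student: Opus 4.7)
My plan is to exploit the shot-noise structure of $P(t)$ under a policy of the form \eqref{eq:u_stationary_complex} to reduce the stochastic problem \eqref{eq:min_variance_st_relaxed} to a separable deterministic one. Each job $k$ contributes to $P(t)$ the deterministic pulse $g_{\e_k,\st_k}(t-\at_k) := v(\e_k,\st_k,\st_k - (t-\at_k))\,\mathbf{1}\{0 \le t - \at_k < \st_k\}$. Since the arrivals form an independently marked stationary Poisson process with intensity $\pint f(\e,\st)$, Campbell's variance formula for Poissonian shot noise (as in the Baccelli monograph cited in the paper) gives, after the change of variable $\x = \st - (t-\at)$,
\begin{equation*}
\var(P(t)) = \pint\, \E_{\e,\st}\!\left[\int_0^{\st} v(\e,\st,\x)^2 \, d\x\right].
\end{equation*}
In these coordinates, the demand constraint \eqref{eq:demand_constraints} reads $\int_0^{\st} v(\e,\st,\x)\,d\x = \e$, while the rate and deadline constraints \eqref{eq:rate_constraints}, \eqref{eq:deadline_constraints} read $v(\e,\st,\x) \in [0,1]$ with $v(\e,\st,\x) = 0$ outside $[0,\st]$.

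The objective and constraints therefore decouple across marks $(\e,\st)$, so it suffices to solve, for each $(\e,\st)$ in the support of $f$, the univariate problem of minimizing $\int_0^{\st} v(\x)^2\,d\x$ subject to $\int_0^{\st} v(\x)\,d\x = \e$ and $v \in [0,1]$. For this I would apply the Cauchy-Schwarz inequality,
\begin{equation*}
\e^2 = \left(\int_0^{\st} v(\x)\,d\x\right)^2 \le \st \int_0^{\st} v(\x)^2\,d\x,
\end{equation*}
with equality iff $v$ is constant on $[0,\st]$. The unique constant meeting the demand constraint is $v \equiv \e/\st$, and the standing assumption $S \subset \{(\e,\st): \e \le \st\}$ guarantees $\e/\st \in [0,1]$, so the rate bound holds automatically. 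Taking expectations in $(\e,\st)$ delivers the stated optimum $\var(P) = \pint\,\E[\e^2/\st]$.

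The step I expect to require the most care is the invocation of Campbell's variance identity in the correct coordinates; once that is established the remainder is essentially a one-line application of Cauchy-Schwarz. Integrability of $\e^2/\st$ is automatic from $\e \le \st$ (so $\e^2/\st \le \e$) together with the assumed finiteness of $\E[\e]$. I also need to verify that the minimizer $v(\e,\st,\x) = (\e/\st)\mathbf{1}\{\x > 0\}$ indeed lies in the relaxed class \eqref{eq:u_stationary_complex}, which is immediate since it depends only on $(\e,\st,\x)$, and it is worth observing that this policy produces a \emph{constant} instantaneous rate for each job, which is exactly why Cauchy-Schwarz is tight.
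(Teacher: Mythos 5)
Your argument is correct and follows essentially the same route as the paper: Lemma~\ref{prop:poisson} (Campbell's formula) reduces the variance to $\pint\,\E\bigl[\int_0^\st v^2\,d\x\bigr]$, the problem decouples across marks, and the per-mark lower bound $\e^2/\st$ is obtained via H\"older's inequality with exponents $(2,2)$, which is exactly your Cauchy--Schwarz step, with equality at the constant rate $\e/\st$. Your additional checks that $\e/\st\le 1$ (from $S\subset\{\e\le\st\}$) and that $\E[\e^2/\st]<\infty$ are worthwhile explicit verifications of points the paper leaves implicit.
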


To show Lemma \ref{thm:static_problem}, we use the following property of the system: since the service rate of a job only depends on the property of that job, its impact on $\var(P)$ can be computed by integrating along the trajectory of a job over its distribution $\pinti f(\e,\st)$~\cite{baccelli}.\footnote{This is a restatement of Brumelle's formula \cite{brumelle} from queueing theory for systems with infinitely many servers with time-varying rates.} In particular, the following relation holds. 
 
\begin{lemma}
\label{prop:poisson}
The mean and variance of $P(t)$ under the policy \eqref{eq:u_stationary_complex} are given by 
\begin{align}
\E[ P ] &= \int_{(\e,\st)\in S }  \int_{0}^\st    v(\e,\st,\x) \pinti f(\e,\st) d\x d\e  d\st  \\
\var( P ) &= \int_{(\e,\st)\in S }  \int_{0}^\st    v(\e,\st,\x)^2 \pinti f(\e,\st) d\x d\e d\st   .
\end{align}
\end{lemma}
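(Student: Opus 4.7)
The plan is to recognize $P(t)$ as a shot-noise functional of a marked Poisson point process and apply Campbell's theorem (which is the Brumelle-type identity already cited in the preceding paragraph). First I would rewrite $P(t)$ explicitly in terms of the arrival marks: since under policy \eqref{eq:u_stationary_complex} the rate each active job contributes depends only on its own marks, and since $x_k(t) = a_k + \tau_k - t$ on $[a_k, a_k+\tau_k)$, we have
\begin{equation}
P(t) = \sum_{k \in \hV} v(\sigma_k, \tau_k, a_k+\tau_k-t)\,\mathbf{1}\{a_k \le t < a_k+\tau_k\}.
\end{equation}
The set $\{(a_k, \sigma_k, \tau_k)\}_{k \in \hV}$ is a Poisson point process on $\real \times S$ with intensity measure $\pint f(\sigma,\tau)\,da\,d\sigma\,d\tau$ by the stationary arrival assumption \eqref{eq:arrival_intensity_stationary}.

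Next I would invoke Campbell's formula for Poisson point processes: for any sufficiently integrable function $h$ on the mark space, $\sum_k h(a_k,\sigma_k,\tau_k)$ has mean $\int h\,d\mu$ and variance $\int h^2\,d\mu$, where $\mu$ is the intensity measure. The variance identity relies on the independent-scattering property of Poisson point processes, which makes the contributions from disjoint mark regions independent. Applying this with $h(a,\sigma,\tau) = v(\sigma,\tau,a+\tau-t)\,\mathbf{1}\{a \le t < a+\tau\}$ yields
\begin{equation}
\E[P(t)] = \int_{S}\!\int_{\real} v(\sigma,\tau,a+\tau-t)\,\mathbf{1}\{a \le t < a+\tau\}\,\pint f(\sigma,\tau)\,da\,d\sigma\,d\tau,
\end{equation}
and the analogous expression with $v^2$ in place of $v$ for $\var(P(t))$.

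Finally I would perform the change of variable $x = a+\tau-t$ (so $da = dx$) inside the $a$-integral. The indicator $\mathbf{1}\{a \le t < a+\tau\}$ becomes $\mathbf{1}\{0 < x \le \tau\}$, and $v$ no longer depends on $t$, which both produces the stated closed-form expressions and confirms that the mean and variance are time-invariant (consistent with stationarity). The main delicate point is simply to verify integrability so that Campbell's theorem applies; this follows from boundedness of $S$, the rate constraint $v \le 1$ from \eqref{eq:rate_constraints}, and the assumed finiteness of the second moments of $\sigma$ and $\tau$. No other step poses real difficulty: once $P(t)$ is cast as a Poisson shot-noise functional, the result is immediate from standard point-process calculus.
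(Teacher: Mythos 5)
Your proposal is correct and follows essentially the same route as the paper's own proof: cast $P(t)$ as a shot-noise functional of the marked Poisson arrival process, verify the square-integrability hypothesis (the paper does this via $\int_0^{\st} v^2\,d\x \le \e$ from the rate bound, so the total is $\pinti\E[\e]<\infty$, which matches your reasoning), and apply Campbell's theorem, with the change of variable $\x = \at+\st-t$ giving the stated time-invariant expressions. No substantive difference from the paper's argument.
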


Lemma \ref{prop:poisson} can be obtained from the Campbell's theorem (see Appendix \ref{sec:lem2}). Now we are ready to prove Lemma \ref{thm:static_problem}.

\begin{proof}{Proof of Lemma \ref{thm:static_problem}.}
The demand constraints \eqref{eq:demand_constraints} and the deadline constraints \eqref{eq:deadline_constraints} leads to
\begin{equation}
\label{eq:eq:ed1}
\int_0^\st v (\e , \st , \x ) d\x = \e , \quad (\e , \st) \in S .
\end{equation}
The objective function \eqref{eq:min_variance_st_relaxed} satisfies
\begin{align}
 \label{eq:thm1_1}
\var(P) &= \int_{(\e,\st)\in S }  \int_0^\st    v(\e,\st,\x)^2 \pinti f (\e,\st) d\x d\e d\st\\
 \label{eq:thm1_11}
&=  \int_{(\e,\st)\in S }  \left\{  \int_0^\st    v(\e,\st,\x)^2  d\x  \right\} \pinti f (\e,\st) d\e d\st \\
 \label{eq:thm1_2}
&\geq \int_{(\e,\st)\in S }  \left\{ \frac{\e^2}{\st}  \right\} \pinti f (\e,\st) d\e d\st .
\end{align}
Here, equality \eqref{eq:thm1_1} is due to Lemma \ref{prop:poisson}. Inequality \eqref{eq:thm1_2} is due to \eqref{eq:eq:ed1} and the Holder's inequality, \ie for any fixed $(\e , \st)$, 
 \begin{align}
 \label{eq:thm1_10}
\left(   \int_0^\st   v(\e,\st,\x)^2  d\x  \right)^{1/2} \left(   \int_0^\st   1  d\x  \right)^{1/2} \geq \int_0^\st    v(\e,\st,\x) d\x = \e , 
\end{align}
where $v(\e,\st,\x) \geq 0$. Alternatively, it can be verified that \eqref{eq:thm1_2} can be attained with equality when $v$ equals \eqref{eq:exact_policy_2}. Therefore, \eqref{eq:exact_policy_2} is the optimal solution of \eqref{eq:min_variance_st_relaxed}.   \hfill \qed

\end{proof}

Lemma \ref{thm:static_problem} considers the optimal scheduler among policies of the form $v( \e ,   \st , \x )$, which takes a more general form than $u(\y , \x )$ with the same objective function. Interestingly, the optimal scheduler in Lemma \ref{thm:static_problem} does not use the additional freedom given by $v(  \e ,   \st , \x   )$ and can be represented by the form $u(\y , \x )$. This indicates that considering accounting for job arrival times by consider a more complex form of scheduler $v( \e ,   \st , \x )$ does not increase the system performance. Now, we can prove Theorem \ref{thm:1_stationary_problem1} using Lemma \ref{thm:static_problem}.

\begin{proof}{Proof of Theorem \ref{thm:1_stationary_problem1}.}
Recall that the optimal solution of \eqref{eq:min_variance_st_relaxed} is the policy \eqref{eq:exact_policy_2}. Under the policy \eqref{eq:exact_policy_2}, the ratio between its remaining demand $\y(t)$ and remaining time $\x(t)$ are constant for any $t \in [\at,\at + \st]$. Therefore, \eqref{eq:exact_policy_2} can be realized using policies of the form \eqref{eq:u_stationary_simple}. Because the optimal value of \eqref{eq:min_variance_st_relaxed} is a lower bound on that of \eqref{eq:min_variance_st}, the optimal solution of \eqref{eq:min_variance_st_relaxed}---Exact Scheduling---is also the optimal solution of \eqref{eq:min_variance_st}.  \hfill \qed
\end{proof}

In fact, the same property also holds for the optimization problems \eqref{eq:min_variance+softdemand_st}, \eqref{eq:min_variance+softdeadline_st}, and \eqref{eq:min_variance+softdemand+softdeadline_st}. This property allows us to derive their closed-form solutions.

\subsection{Soft demand requirements}
\label{sec:soft_demand}

The previous section shows the optimal algorithm under strict service constraints. In this section, we relax the assumption of strict service constraints and characterize the optimal algorithm under soft demand constraints. Specifically, we consider the setting of \eqref{eq:min_variance+softdemand_st}, where the system does not need to satisfy all demands but needs to pay penalty $\C$ for each unit of unsatisfied demands. The resulting optimal algorithm is a variation of Exact Scheduling with an additional rate upper-bound: 
\begin{align}
\label{eq:exact_softdemand_st}	
& u( \y, \x )  = 
\begin{dcases}
\frac{ \y }{ \x } , & \text{if } \frac{ \y }{ \x } \leq \frac{ \C }{ 2 }   \text{ and }   \x > 0 ,\\
\frac{ \C }{ 2 }  , & \text{if } \frac{ \y }{ \x } > \frac{ \C }{ 2 }   \text{ and }   \x > 0,\\
0 , & \text{otherwise.}
\end{dcases}
\end{align}
This policy essentially imposes a threshold (an upper bound) of $\C/2$ on the service rate: jobs whose ratio $\e/\st$ is above threshold $\C/2$ are served at a constant rate $\C/2$ \textit{until its deadline}; jobs whose ratio $\e/\st$ is below this threshold are served according to Exact Scheduling. In other words, a job $k$ receives its full service demand only if $\e_k /\st_k   \leq \C /2$. 

\begin{theorem}
\label{thm:stationary_softdemand}
The policy \eqref{eq:exact_softdemand_st} is the optimal solution of \eqref{eq:min_variance+softdemand_st} and achieves the optimal value 
\begin{align}
\label{eq:min_variance+softdemand_st_cost}
  \var(P) +  \E[ \C \U ]  
= \E\left[   \frac{ \e^2 }{ \st }  \mathbf{1} \left\{  \frac{ \e }{ \st } \leq \frac{ \C }{ 2 }   \right\}  + \C \left( \e - \frac{ \C \st  }{4} \right) \mathbf{1} \left\{  \frac{ \e }{ \st } > \frac{ \C }{ 2 }   \right\} \right]   \pinti  .
\end{align}
\end{theorem}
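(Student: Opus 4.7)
The plan is to closely parallel the argument used for Theorem~\ref{thm:1_stationary_problem1}. First, I would relax the optimization to the broader class of policies of the form $r_k(t) = v(\e_k, \st_k, \x_k(t))$; since this class contains \eqref{eq:u_stationary_simple}, its optimal value lower-bounds that of \eqref{eq:min_variance+softdemand_st}. If the minimizer of the relaxation can be realized as a function of the current state $(\y,\x)$ alone, and in particular in the form \eqref{eq:exact_softdemand_st}, then it is also optimal for the original problem.

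By Lemma~\ref{prop:poisson} for the variance term, together with a Campbell/Brumelle-type identity for the penalty term (yielding $\E[\U] = \pint \E[\C(\e - \hat\e)]$ with $\hat\e = \int_0^{\st} v\,d\x$), the relaxed objective can be written as
\[
\pint \int_{(\e,\st)\in S} \left[ \int_{0}^{\st} v(\e,\st,\x)^2 \, d\x \; + \; \C\e \; - \; \C \int_{0}^{\st} v(\e,\st,\x)\, d\x \right] f(\e,\st)\, d\e\, d\st .
\]
This decouples into an independent minimization over each profile $v(\e,\st,\cdot)\colon [0,\st]\to \Rp$ for fixed $(\e,\st)$, subject to the implicit cap $\int_0^{\st} v\, d\x \leq \e$ (one cannot serve more than the demand).

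I would then parameterize each inner problem by the total service $c := \int_0^{\st} v\, d\x \in [0,\e]$. By Cauchy--Schwarz, exactly as in \eqref{eq:thm1_10}, $\int_0^{\st} v^2\, d\x \geq c^2/\st$ with equality at the constant profile $v \equiv c/\st$. The remaining scalar problem is to minimize $\phi(c) = c^2/\st - \C c + \C\e$ over $c\in [0,\e]$. The unconstrained minimizer $c^\star = \st\C/2$ therefore determines two regimes: if $\e/\st \leq \C/2$, the boundary $c = \e$ is optimal, giving $v \equiv \e/\st$ (Exact Scheduling) with per-job cost $\e^2/\st$; if $\e/\st > \C/2$, the interior $c = \st\C/2$ is optimal, giving $v \equiv \C/2$ with per-job cost $\C(\e - \C\st/4)$. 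Averaging these per-job costs against $\pint f(\e,\st)$ reproduces the right-hand side of \eqref{eq:min_variance+softdemand_st_cost}.

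The main obstacle is the final realizability step: I must verify that this piecewise-optimal $v$ can actually be implemented as a function of the current pair $(\y(t),\x(t))$, and in particular in the form \eqref{eq:exact_softdemand_st}. Under Exact Scheduling the ratio $\y/\x$ is invariant along the trajectory and equal to $\e/\st$, so regime (i) is self-consistent with the rule ``$u = \y/\x$ whenever $\y/\x \leq \C/2$''. Under the constant rate $\C/2$, writing $\y = \e - (\C/2)s$ and $\x = \st - s$ with elapsed time $s$, a direct computation yields $\frac{d}{ds}(\y/\x) = (\e - \C\st/2)/(\st - s)^2$, which is strictly positive precisely in regime (ii), so $\y/\x$ stays strictly above $\C/2$ along the entire trajectory and the rule ``$u = \C/2$ whenever $\y/\x > \C/2$'' is self-consistently triggered. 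Hence \eqref{eq:exact_softdemand_st} realizes the relaxed optimum and closes the gap with the lower bound, completing the proof.
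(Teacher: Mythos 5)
Your proposal is correct and follows essentially the same route as the paper's proof: relax to policies of the form $v(\e,\st,\x)$, decouple the objective over $(\e,\st)$, reduce via the H\"older/Cauchy--Schwarz step to the scalar problem $\min_{c\in[0,\e]} c^2/\st + \C(\e-c)$ with minimizer $\min\{\C\st/2,\e\}$, and then verify realizability in the form $u(\y,\x)$. Your explicit computation of $\tfrac{d}{ds}(\y/\x)$ along the capped trajectory is a slightly more detailed version of the realizability check that the paper leaves as ``it can be verified,'' but the argument is the same.
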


\re{Theorem \ref{thm:stationary_softdemand} shows the performance improvement gained by relaxing the demand requirements. 
Recall from Theorem \ref{thm:1_stationary_problem1} that the average cost per unit job arrival is $ \E\left[ \e^2 /  \st   \right] $ if all demands must be satisfied. 
If the system does not need to satisfy all demand requests, then the average cost for jobs satisfying $ \e / \st  >  \C / 2$ can be reduced from $\E\left[ \e^2 /  \st   \right] $ to $\E\left[ \C \left( \e - ( \C \st / 4 ) \right) \right]$. And the portion of such jobs are given by $\E [ \mathbf{1} \left\{  \e / \st  >  \C / 2    \right\}  ] $.} The optimal policy \eqref{eq:exact_softdemand_st} is also simple and easy to implement. Despite the convenience and wide use of simple thresholding policies in practice, to the best of our knowledge, its optimality results and the optimal choice of thresholding values on rate have not been proposed in the existing literature.

\subsection{Soft deadline requirements}
\label{sec:soft_deadline}

The previous section shows the optimal algorithm under soft demand requirements. In this section, we relax the deadline requirements instead and characterize the optimal distributed algorithm. Specifically, we consider the setting of  \eqref{eq:min_variance+softdeadline_st}, where the system needs to pay penalty $\D$ for each unit of deadline extensions. Although scheduling problems with deadline extension (tardiness) often leads to an NP-hard problem \cite{du1990minimizing,baker1990sequencing}, by taking a probabilistic approach aimed at finding the best remaining job distribution, we obtain the optimal algorithm in closed-form below. The resulting optimal algorithm is a variation of Exact Scheduling with deadline extensions:
\begin{align}
\label{eq:exact_softdeadline_st}
& u( \y , \x )  = \begin{dcases}
 \frac{ \y }{ \x } &  \text{ if }  \frac{\y}{\x} \leq \sqrt{\D}  \text{ and } \x > 0\\
	\sqrt{\D}\;\mathbf{1} \{ \y > 0 \}    &  \text{ otherwise}. 
 \end{dcases} 
\end{align}
Similarly to \eqref{eq:exact_softdemand_st}, this policy essentially sets a threshold (an upper bond) $\sqrt{ \D }$ on the service rate: jobs with the ratio above  threshold $\sqrt{ \D }$ is served according to Equal Service of rate $\sqrt{\D}$ \textit{until it finishes}, jobs with the ratio below threshold $\sqrt{ \D }$ is served according to Exact Scheduling. In other words, the deadline of job $k$ is extended only if $\e_k/  \st_k > \sqrt{ \D } \st_k$.

\begin{theorem}
\label{thm:stationary_softdeadline}
The policy \eqref{eq:exact_softdeadline_st} is the optimal solution of \eqref{eq:min_variance+softdeadline_st} and achieves the optimal value 
\begin{align}
\label{eq:min_variance+softdeadline_st_cost}
  \var(P)+  \E[ \D \Q ]  
= \E\left[  \frac{ \e^2 }{ \st }  \mathbf{1} \left\{ \frac{\e}{\st} \leq \sqrt{\D}     \right\}  + \left( 2  \sqrt{\D}  \e - \D \st \right) \mathbf{1}  \left\{ \frac{\e}{\st} > \sqrt{\D} \right\}  \right]    \pinti  .
\end{align}
\end{theorem}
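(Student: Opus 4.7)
I would follow the same three-step template used for the proof of Theorem \ref{thm:1_stationary_problem1}: relax the policy class, solve the relaxed problem by decoupling the objective across jobs, and then check that the relaxed optimum is realizable within the distributed class \eqref{eq:u_stationary_simple}.

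First, I would enlarge the policy class to rate profiles of the form $r_k(t) = v(\e_k, \st_k, t - \at_k)$ on $t - \at_k \in [0, \hat\st(\e_k,\st_k)]$, where the effective sojourn time $\hat\st(\e,\st) := \max\{s \geq 0 : v(\e,\st,s) > 0\}$ is now permitted to exceed $\st$. By the same Campbell/Brumelle argument underlying Lemma \ref{prop:poisson} applied to the tagged job, the objective decomposes as
\begin{equation}
\var(P) + \E[\D \Q] = \pinti \int_{S} \left( \int_0^{\hat\st(\e,\st)} v(\e,\st,s)^2\, ds + \D(\hat\st(\e,\st) - \st)^+ \right) f(\e,\st)\, d\e\, d\st.
\end{equation}
Any policy of the form \eqref{eq:u_stationary_simple} is representable in this enlarged class, so the optimal value of the relaxed problem lower-bounds that of \eqref{eq:min_variance+softdeadline_st}.

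Second, I would solve the per-$(\e,\st)$ subproblem: minimize $\int_0^{\hat\st} v(s)^2\, ds + \D(\hat\st - \st)^+$ subject to $v \geq 0$ and $\int_0^{\hat\st} v(s)\, ds = \e$. For any fixed $\hat\st$, Hölder's inequality forces the inner minimizer to be the constant profile $v \equiv \e/\hat\st$, reducing the per-job cost to the scalar function $\phi(\hat\st) := \e^2/\hat\st + \D(\hat\st - \st)^+$. This $\phi$ is strictly decreasing on $(0,\st]$, and on $(\st,\infty)$ the first-order condition gives $\hat\st^\star = \e/\sqrt{\D}$. Thus the minimizer is $\hat\st = \st$ with rate $\e/\st$ and cost $\e^2/\st$ whenever $\e/\st \leq \sqrt{\D}$, and $\hat\st = \e/\sqrt{\D}$ with rate $\sqrt{\D}$ and cost $2\sqrt{\D}\,\e - \D\,\st$ whenever $\e/\st > \sqrt{\D}$. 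Integrating against $\pinti f(\e,\st)$ reproduces the right-hand side of \eqref{eq:min_variance+softdeadline_st_cost}.

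Third, I would verify realizability by a state-feedback policy $u(\y,\x)$: each job is served at the constant rate $\min\{\e/\st, \sqrt{\D}\}$ over its effective lifetime. On the Exact-Scheduling branch the ratio $\y/\x$ is invariant along the trajectory, so $u(\y,\x) = \y/\x$ reproduces the relaxed optimum. On the capped branch, a direct computation gives $\y(t)/\x(t) = (\e - \sqrt{\D}(t-\at_k))/(\st - (t-\at_k))$, whose derivative $(\e - \sqrt{\D}\st)/(\st - (t-\at_k))^2$ is strictly positive and which therefore exceeds $\sqrt{\D}$ throughout $\{\x > 0\}$; once $\x \leq 0$ with $\y > 0$ the "otherwise" branch of \eqref{eq:exact_softdeadline_st} again fires at rate $\sqrt{\D}$, completing the job at $\at_k + \e/\sqrt{\D}$ as required. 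Both branches match \eqref{eq:exact_softdeadline_st}, so the relaxed optimum lies in the distributed class and is therefore optimal for \eqref{eq:min_variance+softdeadline_st}. I expect the main obstacle to be the first step: carefully extending the Brumelle-type identity so that the tagged-job integral runs over $[0, \hat\st]$ rather than $[0,\st]$ and the stationary remaining-job process remains well-defined in the enlarged class; once this is in place, Steps 2 and 3 are short and mechanical. A secondary concern is the maximum-rate constraint \eqref{eq:rate_constraints}: the analysis implicitly requires $\sqrt{\D} \leq 1$, and if $\sqrt{\D} > 1$ the capped rate would need to be replaced by $\min\{\sqrt{\D}, 1\}$ with a correspondingly modified cost formula.
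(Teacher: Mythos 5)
Your proposal is correct and follows essentially the same route as the paper: relax to the $v(\e,\st,\x)$ class, decompose per job via the Campbell/Brumelle identity, apply H\"older for fixed effective sojourn time to reduce to the scalar problem $\min_{\hst}\ \e^2/\hst + \D(\hst-\st)^+$ with minimizer $\hst^\star=\max\{\st,\e/\sqrt{\D}\}$, and then verify realizability by checking that $\y(t)/\x(t)\ge \e/\st$ along the capped trajectory so that the distributed policy \eqref{eq:exact_softdeadline_st} reproduces the relaxed optimum. Your secondary concern about $\sqrt{\D}>1$ is moot: since $S\subset\{\st\ge\e\}$ forces $\e/\st\le 1$, the capped branch only fires when $\sqrt{\D}<\e/\st\le 1$, so the rate constraint \eqref{eq:rate_constraints} is never violated.
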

Theorem \ref{thm:stationary_softdeadline} shows the performance improvement by relaxing the deadline requirements. Theorem \ref{thm:1_stationary_problem1} states that, if all deadlines must be satisfied, then the average cost per unit job arrival is $ \E\left[ \e^2 /  \st   \right] $. By allowing deadline extensions, the average cost of jobs satisfying $\e/\st > \sqrt{\D}$ can be reduced from $ \E\left[ \e^2 /  \st   \right] $ to $\E \left[ \left( 2  \sqrt{\D}  \e - \D \st \right) \right]$. And the portion of such jobs are given by $\E \left[ \mathbf{1} \{ \e/\st > \sqrt{\D}  \} \right] $. Moreover, service capacity variance and penalties for deadline extension is optimally balanced when jobs whose deadline extension penalties are smaller than $\e/\st$, \ie $\e/\st > \sqrt{\D}$, are served with deadline extension.

\subsection{Soft demand and deadline requirements}
\label{sec:soft_demand_deadline}

The previous sections show the optimal algorithms under soft demand requirements and soft deadline requirements. In this section, we relax both demand and deadline requirements and characterize the optimal distributed algorithm. Specifically, we consider the setting of \eqref{eq:min_variance+softdemand+softdeadline_st} where the system needs to pay penalty $\C$ for each unit of unsatisfied demands and penalty $\D$ for each unit of deadline extensions. This setting recovers all previous settings as special cases.\footnote{For sufficiently large $\C$, this setting recovers the case of strict demand requirements. For sufficiently large $\D$, this setting recovers the case of strict deadline requirements. For sufficiently large $\C$ and $\D$, this setting recovers the case of strict demand and deadline requirements.} 

Recall from previous sections that, under soft demand requirements, the optimal policy uses a constant service rate and reject partial demand requests only if $\e/\st > \C/2$. Meanwhile, under soft deadline requirements, the optimal policy uses a constant service rate and extends the deadline only if $\e/\st > \sqrt{\D}$. These two special cases motivate us to combine the policies~\eqref{eq:exact_scheduling_st}, \eqref{eq:exact_softdemand_st}, and \eqref{eq:exact_softdeadline_st} as follows:
\begin{align}
\label{eq:exact_softdemand+deadline_st}	
& u( \y , \x )  = \begin{dcases}
\frac{\y }{\x}  	& \text{if } \x > 0 \text{ and } \frac{\y }{\x} \leq \min\left\{  \frac{\C }{2} ,  \sqrt{\D} \right\}   \\
\frac{\C }{2} 	& \text{if } \x > 0 \text{ and } \frac{\y }{\x} > \frac{\C}{2} \text{ and } \frac{\C}{2} \leq \sqrt{\D}  \\ 
\sqrt{\D} \; \mathbf{1}\{\y > 0\}&  \text{otherwise} 
\end{dcases},
\end{align}
The policy uses three strategies depending on different regimes of job states and penalties: high penalties regime, low demand penalty regime, and low deadline penalty regime. These regimes are illustrated in Figure~\ref{fig:optimal policy_softdemand&deadline} as the white, light gray, and dark gray regions, respectively. 
\begin{itemize}
\item \textit{High penalties regime.} When $\min( \C / 2,\sqrt{\D} )  > \e / \st$, it is less costly to satisfy the service requirements than paying penalties for unsatisfied demands or deadlines. So, the best strategy is to satisfy both demands and deadlines optimally using Exact Scheduling \eqref{eq:exact_scheduling_st}.  
\item \textit{Low demand penalty regime.} When $ \C/2 \leq \sqrt{\D}$, the penalties for unsatisfied demands is smaller than that of deadline extensions, so the best strategy is to meet all deadlines optimally with potentially unsatisfied demands using the policy \eqref{eq:exact_softdemand_st}.
\item \textit{Low deadline penalty regime.} When $ \C/2> \sqrt{\D}$, the penalties for deadline extension is smaller than that of unsatisfied demands, so the best strategy is to satisfy demands optimally with potential deadline extensions using the policy \eqref{eq:exact_softdeadline_st}. 
\end{itemize}
From above, the policy \eqref{eq:exact_softdemand+deadline_st} generalizes the optimal algorithms in Section \ref{sec:hard}-\ref{sec:soft_deadline}, and we term it \textit{Generalized Exact Scheduling}.  The following theorem states its optimality condition.  

\begin{theorem}
\label{thm:stationary_softdemand+deadline}
The policy \eqref{eq:exact_softdemand+deadline_st}	 is the optimal solution of \eqref{eq:min_variance+softdemand+softdeadline_st} and achieves the optimal value 
\begin{align}
\label{eq:min_variance+softdemand&deadline_st_cost}
  &\var(P)+  \E[ \C \U ] + \E[ \D \Q ]  =\\
  &\E\left[  \frac{ \e^2 }{ \st }  \mathbf{1} \left\{ \frac{\e}{\st} \leq \min \left \{ \frac{\C }{2} ,  \sqrt{\D} \right\}    \right\}  
+ \C  \left( \e   - \frac{ \C \st }{ 4 } \right) \mathbf{1}  \left\{ \frac{\e}{\st} > \frac{\C }{2} \geq \sqrt{\D}  \right\} 
+ \left( 2  \sqrt{\D}  \e - \D \st \right) \mathbf{1}  \left\{ \frac{\e}{\st} > \sqrt{\D}> \frac{\C }{2} \right\} 
 \right]    \pinti  .
  \nonumber 
\end{align}
\end{theorem}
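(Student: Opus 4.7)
The plan is to extend the relaxation approach used in the proofs of Theorems~\ref{thm:1_stationary_problem1}--\ref{thm:stationary_softdeadline}. First, I would enlarge the admissible class from $u(\y,\x)$ to $v(\e,\st,\xi)$ that may also condition on the arriving job's initial demand and sojourn time (with $\xi$ denoting elapsed time since arrival). Every distributed $u$-policy induces a $v$-policy, so the optimal value under $v$ lower bounds the one under $u$, and it suffices to solve the relaxed problem and then verify that its minimizer is realizable in $u$-form. Under a generic $v$-policy, a Campbell's theorem argument analogous to Lemma~\ref{prop:poisson} decouples all three cost components per job:
\begin{equation*}
\var(P) + \E[\C\U] + \E[\D\Q] \;=\; \pinti\, \E\!\left[\int_0^{\hst} v(\e,\st,\xi)^2\,d\xi + \C\!\left(\e - \int_0^{\hst}\! v(\e,\st,\xi)\,d\xi\right) + \D(\hst - \st)^+\right],
\end{equation*}
where $\hst$ is the actual service duration under $v$ and the outer expectation is over $(\e,\st)\sim f$.

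Fixing $(\e,\st)$, I would then apply the Cauchy--Schwarz step used in Lemma~\ref{thm:static_problem}: writing $\he=\int_0^{\hst} v\,d\xi$, the variance-minimizing shape of $v$ on $[0,\hst]$ is the constant $\he/\hst$, which gives $\int v^2\,d\xi=\he^2/\hst$. The per-job problem thus collapses to the two-variable convex program
\begin{equation*}
\min_{0\le \he\le \e,\ \hst>0}\;\; J(\he,\hst) \;:=\; \frac{\he^2}{\hst} + \C(\e-\he) + \D(\hst-\st)^+.
\end{equation*}
Minimizing in $\hst$ first yields $\hst^\star=\st$ when $\he\le\sqrt{\D}\,\st$ and $\hst^\star=\he/\sqrt{\D}$ otherwise; substituting leaves a piecewise convex function of $\he$ with a kink at $\he=\sqrt{\D}\,\st$. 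First-order conditions on each piece together with the bound $\he\le\e$ produce exactly the three regimes described by \eqref{eq:exact_softdemand+deadline_st}: Exact Scheduling when $\e/\st\le\min(\C/2,\sqrt{\D})$; constant rate $\C/2$ with truncated service when $\C/2\le\sqrt{\D}$ and $\e/\st>\C/2$; constant rate $\sqrt{\D}$ with deadline extension when $\sqrt{\D}<\C/2$ and $\e/\st>\sqrt{\D}$. The per-job optima agree term by term with the integrand of \eqref{eq:min_variance+softdemand&deadline_st_cost}, so the outer expectation delivers the stated value.

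Finally, I would check realizability by the distributed controller: in each regime the optimal rate is constant along the job's trajectory, and the regime is selected at arrival by $\y(\at)/\x(\at)=\e/\st$, exactly as prescribed by \eqref{eq:exact_softdemand+deadline_st}. The hard part will be the case analysis in the two-variable program, and in particular the algebraic comparison between the truncation and extension options. The key identity
\begin{equation*}
J(\he,\st) - J\!\left(\he,\tfrac{\he}{\sqrt{\D}}\right) \;=\; \frac{(\he-\sqrt{\D}\,\st)^2}{\st},\qquad \he\ge\sqrt{\D}\,\st,
\end{equation*}
shows that extension strictly dominates truncation whenever the induced rate exceeds $\sqrt{\D}$, while an analogous quantity proportional to $(\C/2-\sqrt{\D})^2\,\st$ pinpoints the threshold $\C=2\sqrt{\D}$ that separates the two non-trivial regimes; once these comparisons are in hand, the three cases in the theorem fall out directly.
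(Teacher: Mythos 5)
Your proposal is correct and follows essentially the same route as the paper: relax to policies $v(\e,\st,\cdot)$, use the Campbell/H\"older argument to reduce to the per-job program $\min_{\he,\hst}\{\he^2/\hst + \C(\e-\he) + \D(\hst-\st)^{+}\}$, solve it pointwise, and check that the minimizer is realizable in the distributed form \eqref{eq:u_stationary_simple}. The only difference is organizational: the paper first shows by contradiction (splitting on whether $\C^2/4 \leq \D$) that one of the two variables can be fixed at its trivial value and then invokes Lemmas \ref{lem:stationary_softdemand} and \ref{lem:stationary_softdeadline}, whereas your nested minimization over $\hst$ first---with the identity $J(\he,\st)-J(\he,\he/\sqrt{\D}) = (\he-\sqrt{\D}\,\st)^2/\st$---reaches the same three regimes somewhat more directly.
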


\definecolor{green}{gray}{0.8}
\definecolor{blue}{gray}{0.9}
\definecolor{yellow}{gray}{0.6}
\definecolor{violet}{gray}{0.5}
\definecolor{orange}{gray}{0.4}

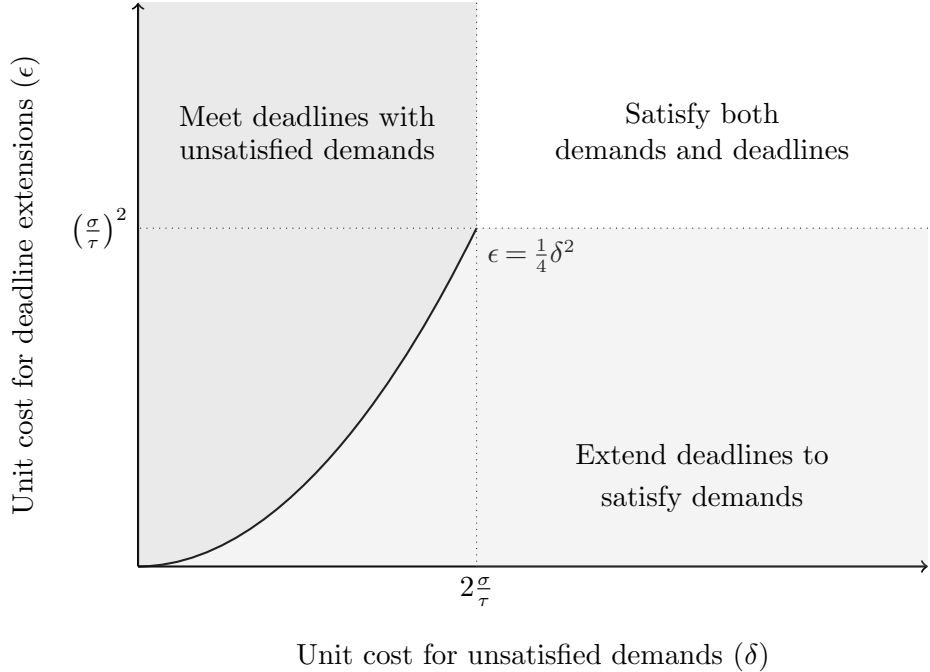
\begin{figure}
	\centering
	
	\begin{tikzpicture}[scale = 1.5]
\draw [thick,->] (0,0) -- (7,0);
\draw [thick,->] (0,0) -- (0,5);
\node [align=center] at (3.5,-.8) {Unit cost for unsatisfied demands ($\C$)};
\node [align=center,rotate=90] at (-1,2.5) {Unit cost for deadline extensions ($\D$)};
\draw [black, thick] (0,0) parabola bend (0,0) (3,3) node[black, below right] {$\D=\frac{1}{4}\C^2$};
\draw [dotted] (3,5)--(3,0) node[below] {$2\frac{\e}{\st}$};
\draw [dotted] (7,3)--(0,3) node[left] {$\left(\frac{\e}{\st}\right)^2$};
\path [fill=green, opacity=0,fill opacity = 0.2] (3,0) rectangle (7,3);
\path [fill=yellow, opacity=0,fill opacity = 0.2] (0,3) rectangle (3,5);
\draw [draw=black, fill=green, opacity=0,fill opacity = 0.2] (0,0) parabola bend (0,0) (3,3) -- (3,0) -- cycle;
\draw [draw=black, fill=yellow, opacity=0,fill opacity = 0.2] (0,0) parabola bend (0,0) (3,3) -- (0,3) -- cycle;
\node at (5,1) {Extend deadlines to};
\node at (5,0.6) {satisfy demands};
\node at (5,4) {Satisfy both};
\node at (5,3.7) {demands and deadlines};
\node at (1.5,4) {Meet deadlines with };
\node at (1.5,3.7) {unsatisfied demands};
\end{tikzpicture}
	\caption{The decision space of the optimal policy for \eqref{eq:min_variance+softdemand+softdeadline_st}. For job profiles with a service demand $\e$, a sojourn time $\st$, and costs $(\C,\D)$, the optimal policy performs either one of the following using constant service rates: satisfy both demands and deadlines (white region), meet deadlines with unsatisfied demand (dark gray region), or satisfy the demand by extending the deadline (light gray region). }
	\label{fig:optimal policy_softdemand&deadline}
\end{figure}

Theorem \ref{thm:stationary_softdemand+deadline} shows when one should extend the deadline to satisfy the demand or let the job depart at its deadline with unsatisfied demands. Moreover, Generalized Exact Scheduling is also optimal for a more general problem \eqref{eq:min_variance+softdemand+softdeadline_st_marked}, when the unit costs for unsatisfied demands and deadline extensions are allowed to be heterogeneous.

\begin{corollary}
\label{lem:stationary_softdemand+deadline}
The optimal solution of \eqref{eq:min_variance+softdemand+softdeadline_st_marked} is 
\begin{align}
\label{eq:exact_softdemand+deadline_st_marked}    
& \uind( \y , \x  ,  \C ,\D)  = \begin{dcases}
\frac{\y }{\x}      & \text{if } \x > 0 \text{ and } \frac{\y }{\x} \leq \min\left\{  \frac{\C }{2} ,  \sqrt{\D} \right\}   \\
\frac{\C }{2}     & \text{if } \x > 0 \text{ and } \frac{\y }{\x} > \frac{\C}{2} \text{ and } \frac{\C}{2} \leq \sqrt{\D}  \\ 
\sqrt{\D} \; \mathbf{1}\{\y > 0\}&  \text{otherwise} 
\end{dcases}.
\end{align}
\end{corollary}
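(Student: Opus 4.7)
The plan is to reduce Corollary \ref{lem:stationary_softdemand+deadline} to a parameterized family of the problem already solved in Theorem \ref{thm:stationary_softdemand+deadline} by exploiting the independent-marking structure $f(\e,\st)g(\C)h(\D)$ of the arrival process. First, following the same strategy used to prove the earlier theorems, I would enlarge the admissible policy class from $\uind(\y,\x,\C,\D)$ to the richer form $v(\e,\st,\x,\C,\D)$ (which observes the full job signature at arrival). Any $\uind$ policy can be realized by some $v$, so the infimum over the larger class lower bounds the infimum over the distributed class; if the optimal $v$ happens to admit a representation as a function of $(\y,\x,\C,\D)$ only, we are done.

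Next, I would invoke a marked-version of Lemma \ref{prop:poisson} (via Campbell's theorem applied to the independently marked Poisson process on $\hT\times S\times C$) to express the three terms of the objective as integrals against the intensity $\pinti f(\e,\st) g(\C) h(\D)$. Concretely,
\begin{align}
\var(P) &= \pinti \int_{(\e,\st)\in S}\int_{(\C,\D)\in C}\!\!\int_0^\st v(\e,\st,\x,\C,\D)^2\,d\x\, g(\C)h(\D)\,f(\e,\st)\,d\e\,d\st\,d\C\,d\D,
\end{align}
and analogous expressions for $\E[\U]$ and $\E[\Q]$, where the demand shortfall and deadline overrun of a job of type $(\e,\st,\C,\D)$ are determined pointwise by $\int_0^\st v\,d\x$ and the support of $v$. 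Since the integrand factors over $(\e,\st,\C,\D)$, the optimization decouples: for each fixed $(\e,\st,\C,\D)$ we independently minimize
\begin{equation}
\int_0^\st v(\x)^2\,d\x \;+\; \C\bigl(\e - \textstyle\int_0^\st v(\x)\,d\x\bigr)_+ \;+\; \D\bigl(\hat\st(v) - \st\bigr)_+
\end{equation}
over nonnegative integrable $v$ with $v(\x)\le 1$.

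The decoupled per-slice problem is exactly the one solved inside the proof of Theorem \ref{thm:stationary_softdemand+deadline} for a homogeneous population with penalties $(\C,\D)$; its minimizer is the constant-rate policy with threshold $\min\{\C/2,\sqrt{\D}\}$ described by \eqref{eq:exact_softdemand+deadline_st}. Thus the per-slice optimum $v^\star(\e,\st,\x,\C,\D)$ is a constant in $\x$ (possibly truncated by a deadline or a completion time), with value depending only on the threshold comparison between $\e/\st$ and the pair $(\C/2,\sqrt{\D})$. Because this constant equals the instantaneous ratio $\y(t)/\x(t)$ along the trajectory it generates (up to the threshold truncations), $v^\star$ can be rewritten as a feedback law in $(\y,\x,\C,\D)$, namely \eqref{eq:exact_softdemand+deadline_st_marked}. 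Since the lower bound provided by the relaxed problem is attained by a policy in the distributed class, optimality follows.

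The only nonroutine step is the marked version of Lemma \ref{prop:poisson}: verifying that when the policy depends on the per-job marks $(\C,\D)$ rather than only on $(\e,\st,\x)$, Campbell's formula and the Poisson-driven second-moment computation carry over without coupling between jobs. This should be straightforward because $(\C_k,\D_k)$ are i.i.d.\ marks independent of $(\at_k,\e_k,\st_k)$, so the jobs remain independent conditionally on their marks and the variance of $P(t)$ still reduces to a single integral over job trajectories weighted by $\pinti f(\e,\st)g(\C)h(\D)$; once that is in hand, the decoupling and the pointwise application of Theorem \ref{thm:stationary_softdemand+deadline} yield the corollary immediately.
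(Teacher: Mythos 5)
Your proposal is correct and is essentially the argument the paper intends: the paper dismisses this corollary as an ``immediate consequence'' of Theorem \ref{thm:stationary_softdemand+deadline}, and your relaxation to $v(\e,\st,\x,\C,\D)$, marked Campbell decomposition, pointwise decoupling over $(\e,\st,\C,\D)$, and re-expression as a feedback law in $(\y,\x,\C,\D)$ is precisely the unpacking of why that consequence is immediate. No gap; you have simply made explicit the steps the paper leaves implicit.
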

Corollary \ref{lem:stationary_softdemand+deadline} is an immediate consequence of Theorem \ref{thm:stationary_softdemand+deadline}. 


\section{Performance degradation inherent to availability in job information}
\label{sec:CentralizedvsDistributed}

{\color{black}

Given our focus on distributed algorithms, we should investigate how much performance degrades in comparison to centralized algorithms. This investigation also includes a practically important question: if there is any middle ground between centralized and distributed algorithms in which scalability and close-to-centralized performance can be achieved simultaneously. Moreover, another practically relevant question is: what can be done if the information on the service requirement (demands and/or deadlines) are missing, and how much does the performance degrade due to the missing information?  In this section, we answer these questions using both experiments and theory. Specifically, we compare the performance of optimal offline algorithms, centralized online optimization, particularly-centralized algorithms, online distributed algorithms using actual electric vehicle charging data from the Caltech Testbed \cite{testbed} and synthetic data drawn from varying arrival distribution (Section \ref{sec:emp}). Then, we derive bounds on the cost of the optimal centralized policy and use these bounds to characterize the performance degradation of the optimal distributed algorithm (Section \ref{sec:lower-bound}). When it comes to deriving performance bounds, there is no standard technique in queueing to derive the performance limits of centralized policies in this setting. Instead, we borrow tools from optimal control and provide an upper bound on the performance. Finally, we present a proof of the upper bound, which is potentially useful for providing performance degradation bounds for policies in other settings as well (Section \ref{sec:lower-bound-proof}).

\subsection{Empirical evaluation}
\label{sec:emp}

To evaluate the performance of the proposed algorithm, we compare its performance and existing scheduling algorithms in an electric vehicle charging testbed and using synthetic data of varying arrival distribution. 

\subsubsection{System and data}
\label{sec:data-explain}

We employed a trace-driven simulation on real data from an electric vehicle charging testbed~\cite{testbed}, and a synthetic data set randomly drawn from a set of arrival distribution with varying parameters. The real data contain the arrival profiles of 92 days in 2016. A charging instance contains service requests from each electric vehicle arriving in one day. A service request of a job is defined by its arrival time, energy demand, and sojourn time. The statistics of the service requests are summarized in Table \ref{table:instance_stat}. The synthetic data are generated from the following set of arrival distribution. The time is discretized into the sampled time $t_1 = 0, t_2, \cdots , t_n$. Given a vector $b(i)$ of i.i.d. Bernoulli random variables with mean $p_B (\ll 1)$, the sampled time $t_i$ is considered to have one arrival if $b(i) = 1$; and zero arrival if $b(i) = 0$. For each arrival, its service demand $\e$ is uniformly distributed in $[\underline \e, \bar \e]$. Its sojourn time is generated from two different cases, defined by two types of arrival distributions (I and II). In distribution I, the sojourn time is given by $\e + \ell$ (additive), where $\ell$ is i.i.d. exponentially distributed with mean $\bar \ell$. In distribution II, the sojourn time is given by $\gamma \e$ (multiplicative), where $\gamma$ is uniformly chosen from the interval $[1, \bar \gamma]$. In both distributions I and II, all jobs are feasible, \ie $\e_k \leq \st_k$ for all $k \in \hV$. The parameters for the arrival distributions are chosen to be $p_B \in [0.1, 0.3], \underline \e= 10, \bar \e = 20, \bar \ell \in [10, 50], \bar \gamma = 3$.

\begin{table}
\begin{subtable}{\textwidth}
\caption{Job profiles}
\begin{tabularx}{\textwidth}{  L{0.2} | C{0.6} | C{0.6} | C{0.6} | }

				& Demand  & Sojourn time     	\\
& $ \e_k $ (kW $\times$ minutes)	& $ \st_k $ (minutes)	 	\\
\hline 
Mean 			&  $ 5.1 \cdot 10^2$  	& $4.5\cdot 10^3$ 	 		\\
Variance 			&  $ 3.1 \cdot 10^5$	& $7.7\cdot 10^6$ 				\\
\end{tabularx}
\end{subtable}

\begin{subtable}{\textwidth}

\caption{Instance profiles }
\begin{tabularx}{\textwidth}{  L{0.2} | C{0.6} | C{0.6} | C{0.6} | }
				&Total demand   & Time horizon  &Number of jobs 		\\
				&$\sum_{k \in \hV} \e_k $ (kW $\times$ minutes) & $T$ (minutes)  & $|\hV|$ 		\\
\hline
Mean 			 			&  	$8.4 \cdot 10^3$    & 	$6.5\cdot 10^2$ & $14.2$ \\
Variance 					& 	$2.0 \cdot 10^7$ &   $1.4 \cdot 10^4$ & $48.2$ 		\\
\end{tabularx}
\end{subtable}

\caption{Statistics of the electric vehicle charging instances at the testbed~\cite{testbed}.}
\label{table:instance_stat}
\end{table}

\subsubsection{Algorithms tested}
\label{sec:testalgorithm}

We compared a few standard schedulers and Generalized Exact Scheduling. The schedulers range from optimal offline policy and online fully-centralized policies to online partially-centralized policies and online distributed policies. The detail of these schedulers is defined below.

\vspace{3mm}
\noindent \emph{Offline optimal algorithms (centralized).}

To understand the best possible performance, we compare with the  optimal offline algorithms. The optimal offline algorithm tells the best performance achievable given the centralized information of \textit{all} jobs arriving in the future. The offline policy takes the form 
\begin{align}
\label{eq:offline}
r_k(t) = \off ( k, t, \{ A_t \}_{t \in [0 , T] } )  , \quad \forall k \in \hV, 
\end{align}
where the service rate at time $t$ is allowed to use the information of all future arrivals. The optimal offline policy can be computed from the following optimization problem: 
\begin{align}
\label{eq:offline-optimization}
\underset{  \eqref{eq:rate_constraints}\eqref{eq:demand_constraints}\eqref{eq:deadline_constraints}}{\text{minimize}}  \;\;\;\;\;\;\;\; \frac{1}{T} \int_{0}^{T} ( P(t) - \bar P )^2 dt, 
\end{align}
where the optimization variable is $\off$ in \eqref{eq:offline}, and $\bar P = (1/T) \int_{0}^T P(t) dt $ be the time average of $P(t)$. We denote the solution of problem \eqref{eq:offline-optimization} as Optimal Offline.

This assumption on Optimal Offline is often too strong in practice: offline algorithms cannot be used when future information is hard to obtain. However, as any distributed or online algorithms can perform no better than the optimal offline algorithm, it is still useful to have its performance as a baseline. 
Specifically, we quantify the relative cost of any online algorithm and Optimal Offline using the ratio of between the cost of the algorithm and that of Offline Optimal (with a slight abuse of notation,\footnote{Competitive-ratio typically refers to the worst-case ratio among all possible instances, but here, we use empirical competitive-ratio to refer to the empirically realized ratio in one instance.\label{fn:competitive-ratio}} we denote this ratio as the \textit{empirical competitive-ratio}). This quantify is used in Figure \ref{fig:ES_strict}, \ref{fig:EShist}, \ref{fig:ES_strict_app} to evaluate the performance of different algorithms under varying arrival distribution. 

\vspace{3mm}
\noindent \emph{Fully-centralized online algorithms.}

We consider centralized (online) scheduling policies of the form 
\begin{equation}
\label{eq:u_cent1}
\rate_k(t) = \uc( k, t , A_t ) , \quad \forall k \in \hV,
\end{equation}
where $A_t = \{ (\at_k , \e_k ,\st_k ,   \y_k(t) , \x_k(t) )  : \at_k \leq t \} $ is the set that contains the information of jobs arriving no later than $t$, and $\uc(k,t. \cdot)$ is a deterministic mapping from $A_t$ to a service rate $\rate_k(t)$. When deciding the service rate at each time, the policy can use the information of jobs that arrived before that time. We list below a few centralized online policies tested in this paper. 
\begin{itemize}
\item Online Optimization MPC. This policy performs Model Predictive Control (MPC) on the objective function \eqref{eq:offline-optimization}. Specifically, at each time $t$, this policy solves the optimization problem \eqref{eq:offline-optimization} with optimization variable \eqref{eq:u_cent1} to find the service rates from $t$ to the $T$, but the scheduler put only $r(t)$ into action. It recompute problem \eqref{eq:offline-optimization} to obtain the service rates from $t+1$ to the $T$ and put only $r(t+1)$ into action. The computed service rates are optimal at $t$ if no jobs arrive in the future but have no global optimality guarantees otherwise. 
\item Earliest Deadline First (EDF). This policy allocates a fixed capacity $p_{\text{EDF}}$ to jobs in ascending order of their deadlines. Under soft demand constraints, jobs are served until their deadline. Under soft deadline constraints, jobs are served until their completion. 
\item Least Laxity First (LLF). Recall from \eqref{fn:laxity} that $\ell_k(t) = \y_k(t) - \x_k(t)$ is the laxity of job $k$ at time $t$. This policy allocates a fixed capacity $p_{\text{LLF}}$ to jobs in ascending order of their laxity. Under soft demand constraints, jobs are served until their deadlines. Under soft deadline constraints, jobs are served until their completion.
\item Fair Sharing (FS). This policy equally distributes a fixed capacity among jobs. Under soft demand constraints, jobs are served until their deadlines according to $r_k(t) = \min\{  p_{\text{FS}} /n(t) , 1 \} \mathbf{1}\{ \x_k(t) > 0 \}$, where $n(t)$ the is number of unfinished jobs at time $t$. Under soft deadline constraints, jobs are served to their completion according to $r_k(t) = \min\{  p_{\text{FS}}' /n(t) , 1 \}  \mathbf{1}\{ \y_k(t) > 0  \text{ and } \x_k(t) > 0  \}$. Intuitively, $p_{\text{FS}}$ or $p_{\text{FS}}'$ is the service capacity the system is willing to provide in their respective settings, and this capacity is shared among all unfinished jobs. Here $p_{\text{FS}}$ and $p_{\text{FS}}'$ are chosen to be the optimal offline values.\footnote{\label{ft:alg_para}Since the offline optimal parameters are unknown in practice, the test results obtained here are optimistic.}
\end{itemize}

\vspace{3mm}
\noindent \emph{Distributed online algorithms.}

Recall from Section \ref{sec:stationary_model} that a distributed online policy takes the form \eqref{eq:u_stationary_simple}. Below lists the centralized online policy tested in our experiments. 
\begin{itemize}
\item Generalized Exact Scheduling \eqref{eq:exact_softdemand+deadline_st}. This policy recovers Exact Scheduling \eqref{eq:exact_scheduling_st} under strict service requirements, the policy \eqref{eq:exact_softdemand_st} under soft demands, and the policy \eqref{eq:exact_softdeadline_st} under soft deadlines.
\item Immediate Scheduling. This policy schedule jobs at its maximum rate upon arrival, \ie $r_k(t) = \mathbf{1}\{\y_k(t) > 0\}$ for any $k \in \hV$. 
\end{itemize}

\vspace{3mm}
\noindent \emph{Partially-centralized algorithms.}

The centralized algorithms listed above require the scheduler to access all service requirement information (demands and deadlines) for all jobs present in the system. In contrast, the above distributed algorithms use no centralized information (\ie the service rate of each job is only determined using its own information). Beyond the two extreme cases of totally centralized versus totally distributed, there is the middle ground of \textit{partially centralized} algorithms where service rates are determined mostly using the local information of each job but are allowed to access some global state variables. The design choices for partially centralized algorithms are vast, yet their potential is under-explored in the existing literature. Although a comprehensive study of such design space is beyond the scope of this paper, we have explored a few such options empirically to facilitate future discussion. As the major goal of our paper is to the design of scalable algorithms, we focus on near-distributed policies that only use a limited number of global variables. Such policies take the form 
\begin{align}
\label{eq:pc_stationary_simple}
&r_k(t) = pc(  \y_k(t)  , \x_k (t) , z(t)  )  \geq 0 , &k  \in \hV
\end{align}
where $z(t)$ is a low-dimensional vector that is shared among the local schedulers for each job. The policy \eqref{eq:pc_stationary_simple} requires much less resource in computation and communication compared with the centralized algorithms listed above. For example, Online Optimization MPC solves at each time step a quadratic program; EDF and LLF require jobs to be sorted. In contrast, \eqref{eq:pc_stationary_simple} only evaluates a closed-form function, and  $z(t)$ through to local schedulers with minimum communication resources. 
 
We tested many algorithms, where $z(t)$ contains the service capacity, total remaining demands, total remaining time, number of jobs, and combinations of these quantities. Some has better performance than others, and we present below two of such algorithms. 
\begin{itemize}
\item Exact Scheduling PC. This policy performs Exact Scheduling plus minor adjustment using partially centralized (PC) variable $P(t)$. It operates as Exact Scheduling when the service capacity is higher than average but adds additional boosts in service rate otherwise, \ie
\begin{align}
\label{eq:pc_ES}
pc(  \y_k(t)  , \x_k (t) , z(t)  ) =  \begin{dcases}
\mu \frac{ \y_k(t) }{ \x_k (t)  }  & P(t-dt) < \bar P \\ 
\frac{  \y_k(t) }{ \x_k (t) }  & \text{otherwise}
\end{dcases} 
\end{align}
where $dt$ capture the latency in communicating $P(t)$, and $\mu \geq 1$ is the factor that boosts service rate at low service capacity regime. Empirically, the values for $\mu$ that work well for the scheduling instances tested range from $1.2$ to $1.6$. 
\item Equal Service. This policy offers a homogeneous service rate to all unfinished jobs. Under strict service requirements, it serves jobs with positive laxity at a homogeneous service rate $c_{\text{ES}} $ and jobs with zero laxity at its maximum rate $1$. Specifically, the laxity of a job at time $t$ is defined as the remaining time before deadline if the job is to be served at its maximum rate from time $t$ until job completion. In this context, it can be computed as the remaining demand minus the remaining time: 
\begin{align}
\label{fn:laxity} 
   \ell_k(t) := \y_k(t) - \x_k(t) . 
\end{align}
The service rate under this policy is given by $r_k(t) = c_{\text{ES}} \mathbf{1}\{ \ell_k(t) > 0 \text{ and } \y_k(t) > 0\} + \mathbf{1}\{ \ell_k(t) \leq 0 \text{ and } \y_k(t) >0 \}$. Under soft demand constraints, it serves jobs at a homogeneous service rate $c_{\text{ES}}'$ before its deadline, and the service rate is given by $r_k(t) = c_{\text{ES}} ' \mathbf{1}\{ \y_k(t) > 0 \text{ and } \x_k(t) \geq 0 \}$. Note that this policy may not fulfill the demand of all jobs but does satisfy all deadlines. 
Under soft deadline constraints, it serves jobs at a homogeneous service rate until its completion, and the service rate is given by $r_k(t) = c_{\text{ES}} '' \mathbf{1}\{\y_k(t) > 0 \}$. Note that this policy satisfies all demands but may extend the deadlines of some jobs. Insights from Section \ref{sec:stationary_systems} suggest that Equal Service may perform well when the values of $c_{\text{ES}} , c_{\text{ES}}', c_{\text{ES}} ''$ are closed to $\E[ v^*(   \e_k ,   \st_k , \x_k (t)   ) ]$, where $v^*$ is the optimal solutions for problem \eqref{eq:min_variance_st_relaxed}, \eqref{eq:cost2}, and \eqref{eq:min_variance+softdeadline_st1}.\footnote{Problem \eqref{eq:cost2} and \eqref{eq:min_variance+softdeadline_st1} are defined in the Appendix.} Indeed, we observed this behavior empirically. Moreover, we noticed that Equal Service has robust behavior and performance to small perturbation in its rate away from these values as well. 
\end{itemize}

The algorithms listed above are compared in the settings of strict service constraints in Figure~\ref{fig:ES_strict}, soft demand constraints in Figure~\ref{fig:ESsoftdemand}, soft deadline constraints in Figure~\ref{fig:ESsoftdeadline}. 

\subsubsection{Fully-centralized vs partially-centralized vs distributed algorithms}

In the setting of strict service constraints, we can only use Exact Scheduling, Immediate Scheduling, and Equal Service, Online Optimization (MPC), and Offline Optimal because other algorithms cannot guarantee to satisfy the service requirement strictly.  For the instances in the testbed, we observed a significant performance degradation from Offline Optimal to Online algorithms: online algorithms experience 1.5 times more cost due to the lack of future information. However, among online algorithms, Exact Scheduling only degrades from Online Optimization (MPC) by an average of $20\%$ in cost. With a minor adjustment in Exact Scheduling using a globally shared variable $P(t)$, Exact Scheduling PC (partially-centralized) in \eqref{eq:pc_ES} can reduce the cost for about $10\%$. This cost reduction leads to less than $10 \%$ of difference in cost between Exact Scheduling PC and the fully-centralized Online Optimization (MPC), which requires much more computational and communication resources (Figure~\ref{fig:ES_strict-1}). For synthetic instances, the performance degradation from Offline Optimal to online algorithms is much less than in the testbed, which may be attributable to the fact that the synthetic data's arrival distribution is closer to our assumptions in the arrival distribution of the optimization problems. On the other hand, the performance degradation from fully-centralized to partially-centralized to fully-distributed remains similar. Exact Scheduling PC (partially centralized) only degrades from fully-centralized Online Optimization (MPC) by about $10\%$ on average, and Exact Scheduling (fully-distributed) only degrades from Exact Scheduling PC by another $10 \%$ on average. This relation holds beyond the specific arrival distribution considered in Figure~\ref{fig:ES_strict-2}-\ref{fig:ES_strict-3} (see Figure \ref{fig:ES_strict_app} in the Appendix for the performance comparison in varying arrival distributions).

The relative performance/cost of Exact Scheduling and others depends on the characteristics of the charging instance. To further investigate this dependency, we grouped instances according to its empirical competitive-ratio of performing Exact Scheduling and computed the average arrival rate and demand and sojourn time ratio for each group. In general, empirical competitive-ratio (the comparative performance) improves as the number of arrivals decrease (Figure~\ref{fig:hist2}) and also as average demand to sojourn time ratio increases in size (Figure~\ref{fig:hist3}). These points can also be seen in Figure~\ref{fig:example-best-worst}, which compares the service rates for the instance in which the Exact Scheduling performed equally well with Offline Optimal (Figure~\ref{fig:ESbest}) and those for the instance in which Exact Scheduling performed much worse (Figure~\ref{fig:ESworst}). Intuitively, sparser arrivals would require less coordination between scheduling different jobs, which in turn reduces the advantages of being able to use centralized information. Moreover, when the deadline is tight, the service requirement does not allow much flexibility in varying the service rate over time, and the offline (centralized) algorithm may not be able to use the future arrival information to its full advantage.

When the demands do not need to be strictly satisfied, the scheduler can exploit this flexibility to reduce the overall cost by balancing the service capacity variance and the penalties for unsatisfied demands. In this setting, the behavior of a few distributed algorithms (Generalized Exact Scheduling, Immediate Scheduling, Equal Service) and centralized algorithms (Earliest Deadline First, Least Laxity First, Fair Sharing) is compared in Figure~\ref{fig:ESsoftdemand}.\footnote{\label{ft:cost-name} In Figure \ref{fig:ESsoftdemand}-\ref{fig:ESsoftdeadline}, we use ``normalized cost'' instead of empirical competitive-ratio. This is because, unlike the case of empirical competitive-ratio, the cost under soft service requirement is compared here with that of offline optimal for hard service requirement.} As the unit penalty for unmet demands $\C$ grows, all algorithms inevitably suffer from increased costs as well. However, the cost of Generalized Exact Scheduling plateau out at relatively small $\C$, which results in a lower cost compared with other centralized and distributed algorithms (Figure~\ref{fig:ESsoftdemand}a). This quick plateau is achieved by a highly adaptive reduction in the total amount of unsatisfied demands. For small unit penalty $\C$, Generalized Exact Scheduling is among the algorithms with the largest amount of unmet demand to exploit the flexibility in being able to miss some demands. For large unit penalty $\C$, it has the smallest amount of unmet demand in order to minimize its high penalty associated with not meeting demands (Figure~\ref{fig:ESsoftdemand}b). This dynamic and optimal adjustment is obtained as the solution of the optimization problem \eqref{eq:min_variance+softdemand_st}, which balances the service capacity variance and unmet demand. Thus, its design process is systematic and does not require tedious manual adjustments.  

When job deadlines do not need to be strictly enforced, the scheduler can exploit this flexibility to reduce the overall cost by balancing the service capacity variance and the penalties for deadline extensions. In this setting, the behavior of a few distributed algorithms and centralized algorithms is compared in Figure~\ref{fig:ESsoftdeadline}.\textsuperscript{\ref{ft:cost-name}} Similar to the setting of soft demand, Generalized Exact Scheduling achieves a lower cost than other distributed algorithms (Figure~\ref{fig:ESsoftdeadline}a). It also has a comparable performance with the centralized algorithms when the unit penalty for unsatisfied deadline $\D$ is large. Such performance is achieved by drastically reducing the total amount of unsatisfied deadline as $\D$ increases to avoid the high penalty associated with deadline extension (Figure~\ref{fig:ESsoftdeadline}b). This adjustment is obtained as the solution of the optimization problem \eqref{eq:min_variance+softdeadline_st}, which systematically balances the service capacity variance and deadline extension. 

Generalized Exact Scheduling is the outcome of systematically optimizing service capacity to find the right balance between service capacity variance and the penalties for unsatisfied demands or deadlines. Its excellent performance compared to other distributed algorithms is not surprising because those algorithms are not optimized for dynamic service capacity nor designed to systematically trade-off service capacity variance and unmet demands or deadlines. Moreover, the results in the testbed also suggest that Generalized Exact Scheduling can perform better than other distributed algorithms (such as in the charging of electric vehicles) beyond the case of Poisson arrivals under which Exact Scheduling is optimal.

\subsubsection{Dealing with demand and deadline uncertainties} 

Most algorithms discussed in this section require knowledge about the demands and deadlines of all jobs. This condition is valid for certain applications~\cite{testbed,gan2013optimal,yao1995scheduling,bansal2007speed}. For example, in the electric vehicle charging testbed~\cite{testbed}, the system receives user input about the energy demand and departure time of each vehicle. On the other hand, there are other situations where the information on service requirements (demands and/or deadlines) can be missing for all or a subset of jobs. 
Recall from Section \ref{sec:stationary_systems} that the optimal distributed policy is to have a flat and low service rate in the service rate trajectory. This intuition motivates us to consider a mixture of Exact Scheduling and Equal Service: serve according to Exact Scheduling if the demands and deadlines are known; otherwise, make the best guess about a good service rate and apply that rate to all jobs with unknown demand and/or deadlines. This policy reduces to Exact Scheduling if the service requirement for all jobs are known, whereas it reduces to Equal Service when the service requirement for none of the jobs are known. With a slight abuse of notation, we denote this extension for case of potentially unknown service requirement as Generalized Exact Scheduling as well. 

Now we look into how much system performance may degrade in $Var(P)$ if the demands and deadlines are unknown.  Let $s_k$ be a binary random variable taking the value of $1$ if the system has access to the demand and deadline of job $k$ and $0$ otherwise. We assume that the probability of $s= i \in \{ 1, 0\}$ is $p( s = i )$, and $s$ is independent with $\at$ and $(\e, \st)$. Following the argument of \eqref{eq:thm1_1}-\eqref{eq:thm1_11}, we have  
\begin{align}
\label{eq:uncertainty-1}
\var(P) =& \E \left[   \int_0^\st    v(\e,\st,\x)^2  d\x \right]  \\ 
\label{eq:uncertainty-3}
=& \; p( s = 1 ) \E \left[  \frac{ \e^2 } { \st }  \Big| s = 1 \right] + p( s = 0 )  \E \left[   \int_{-\infty}^\st    v(\e,\st,\x)^2  d \Big| s = 0 \right] \\
\label{eq:uncertainty-4}
=& \; p( s = 1 ) \E \left[  \frac{ \e^2 } { \st }  \Big| s = 1 \right] +  p( s = 0 ) \E \left[  \frac{ \e^2 } { \st }  \Big| s = 0 \right] \\
\label{eq:uncertainty-5}
& \; + p( s = 0 )  \E \left[   \int_{-\infty}^\st    v(\e,\st,\x)^2  d \Big| s = 0 \right]  - p( s = 0 ) \E \left[  \frac{ \e^2 } { \st }  \Big| s = 0 \right]\\
\label{eq:uncertainty-6}
= & \; \E \left[  \frac{ \e^2 } { \st }  \right] + p( s = 0 ) \left\{    \E \left[   \int_{-\infty}^\st    v(\e,\st,\x)^2  d \Big| s = 0 \right]  - \E \left[  \frac{ \e^2 } { \st }  \Big| s = 0 \right] \right\} \\
\label{eq:uncertainty-7}
= & \;  q_{ES}   + p( s = 0 ) \left\{    \E \left[   \int_{-\infty}^\st    v(\e,\st,\x)^2  d\x   - (\e^2 / \st) \Big| s = 0 \right]     \right\} \\
\end{align} 
where $q_{ES}$ is the optimal cost of Exact Scheduling. Equality \eqref{eq:uncertainty-3} and \eqref{eq:uncertainty-5} use the Law of Total Expectation. From \eqref{eq:uncertainty-7}, the performance degradation due to unknown demands and deadlines is computed to be\footnote{No formula is given for the case of strict demand and deadline because demand and deadline satisfaction cannot be guaranteed without the information of demands and deadlines. }
\begin{align} 
\label{eq:uncertainty-8}
&\begin{dcases}
p( s = 0 )  \E\left[ {c'_{ES}}^2 \min\{ \st, \e/{c'_{ES}} \} -  (\e^2 / \st)  \right] &  \text{in case of soft demand} \\
p( s = 0 ) \E\left[  { c''_{ES} } \e   - (\e^2 / \st)  \right] &  \text{in case of soft deadline} 
\end{dcases}
\end{align}
For the instances in the testbed and the synthetic data (from Section \ref{sec:data-explain}), the performance degradation is estimated to be $15 \sim 40\%$ of the cost of Exact Scheduling multiplied by the ratio of jobs with unknown service requirements (the overall cost is $100 + 15 p( s = 0 ) \sim 100 + 40 p( s = 0 ) \%$ of that of Exact Scheduling. 
This estimation is obtained by realizing that the value in \eqref{eq:uncertainty-8} is upper-bounded by that of Equal Service for strict demand and deadlines requirement. So the performance difference between Exact Scheduling and Equal Service in Figure \ref{fig:ES_strict} can be used to estimate this value.

}

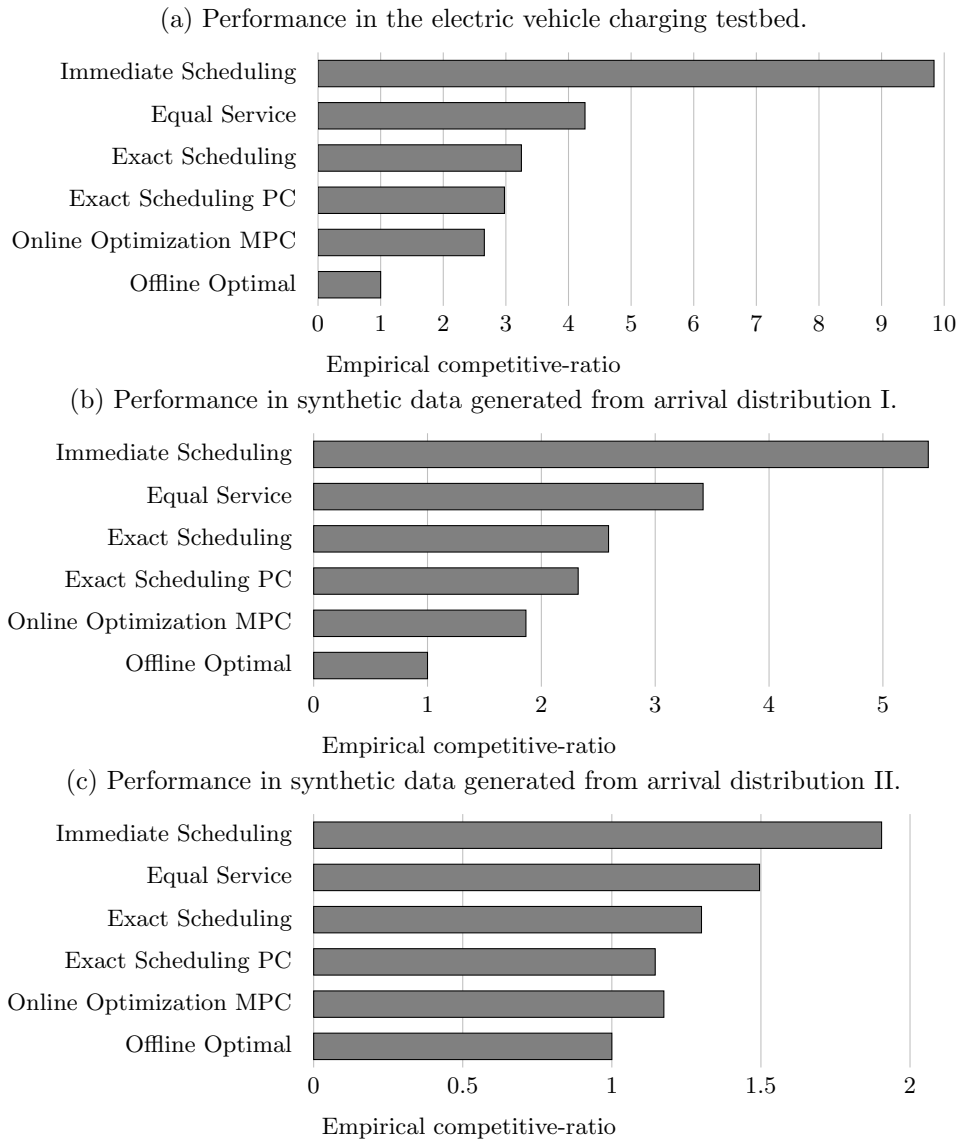
\begin{figure}
    \centering

\begin{subfigure}[b]{\textwidth}
	\centering
	\caption{Performance in the electric vehicle charging testbed.}
	       \begin{tikzpicture}
	
	\begin{axis}[width=0.6\columnwidth,height=0.3\columnwidth,xbar, xmin=0, xmax=10, xlabel={\footnotesize Empirical competitive-ratio}, symbolic y coords={
		{Offline Optimal},
		{Online Optimization MPC},
		{Exact Scheduling PC},
		{Exact Scheduling},	
		{Equal Service},
		{Immediate Scheduling}
		},
	ytick=data,
	 axis line style={draw=none},
	 tick style={draw=none},
	xmajorgrids=true,
	ticklabel style={font=\footnotesize},
	xtick = {0,1,2,3,4,5,6,7,8,9,10},
    xlabel style = {anchor=north east}
	]
	\addplot[fill=gray] coordinates {
		(1,Offline Optimal)
		(2.6559,Online Optimization MPC) 
		(3.2481,Exact Scheduling)
		(2.9772,Exact Scheduling PC) 
		(4.2637,Equal Service) 
		(9.8375,Immediate Scheduling)
	};
	\end{axis}
       \end{tikzpicture}
       
    \label{fig:ES_strict-1}
\end{subfigure}
\begin{subfigure}[b]{\textwidth}
	\centering
	\caption{Performance in synthetic data generated from arrival distribution I.}
	    \begin{tikzpicture}
	
	\begin{axis}[width=0.6\columnwidth,height=0.3\columnwidth,xbar, xmin=0, xmax=5.5, xlabel={\footnotesize Empirical competitive-ratio}, symbolic y coords={%
		{Offline Optimal},
		{Online Optimization MPC},
		{Exact Scheduling PC},
		{Exact Scheduling},	
		{Equal Service},
		{Immediate Scheduling}
		},
	ytick=data,
	 axis line style={draw=none},
	 tick style={draw=none},
	xmajorgrids=true,
	ticklabel style={font=\footnotesize},
	xtick = {0,1,2,3,4,5},
    xlabel style = {anchor=north east}
	]
	\addplot[fill=gray] coordinates {
		(1,{Offline Optimal})
		(1.86459256742504,{Online Optimization MPC}) 
		(2.59018732364249,{Exact Scheduling})
		(2.32346230692476,{Exact Scheduling PC}) 
		(3.42076597449134,{Equal Service}) 
		(5.39872216149675,{Immediate Scheduling})		
	};
	\end{axis}
       \end{tikzpicture}
    \label{fig:ES_strict-2}
\end{subfigure}
    \begin{subfigure}[b]{\textwidth}
	\centering
	\caption{Performance in synthetic data generated from arrival distribution II.}
	       \begin{tikzpicture}
	
	\begin{axis}[width=0.6\columnwidth,height=0.3\columnwidth,xbar, xmin=0, xmax=2.1, xlabel={\footnotesize Empirical competitive-ratio}, symbolic y coords={%
		{Offline Optimal},
		{Online Optimization MPC},
		{Exact Scheduling PC},
		{Exact Scheduling},	
		{Equal Service},
		{Immediate Scheduling}
		},
	ytick=data,
	 axis line style={draw=none},
	 tick style={draw=none},
	xmajorgrids=true,
	ticklabel style={font=\footnotesize},
	xtick = {0,0.5,1,1.5,2},
    xlabel style = {anchor=north east}
	]
	\addplot[fill=gray] coordinates {
		(1,{Offline Optimal})
		(1.1745,{Online Optimization MPC}) 
		(1.3007,{Exact Scheduling})
		(1.1455,{Exact Scheduling PC}) 
		(1.4954,{Equal Service}) 
		(1.9047,{Immediate Scheduling})		
	};
	\end{axis}
       \end{tikzpicture}
    \label{fig:ES_strict-3}
\end{subfigure}
    \caption{Performance comparison of algorithms under strict demand and deadline constraints in the testbed~\cite{testbed}. The ratio of each algorithm's empirical variance to the Offline Optimal is averaged over all scheduling instances. The number of instances averaged are $92$ in plot (a) and $500$ in plot (b) and plot (c). The instances used here are described in Section \ref{sec:data-explain}. In plot (b), the arrival distribution I is set to have parameter $\bar \ell = 15$. In plot (c), the arrival distribution II is set to have parameter $\bar \gamma = 2$. For arrival distribution with different parameters from (b) and (c), the performance is shown in Figure \ref{fig:ES_strict_app} in the Appendix \ref{sec:app-simulation}. 
    }
    \label{fig:ES_strict}
\end{figure}

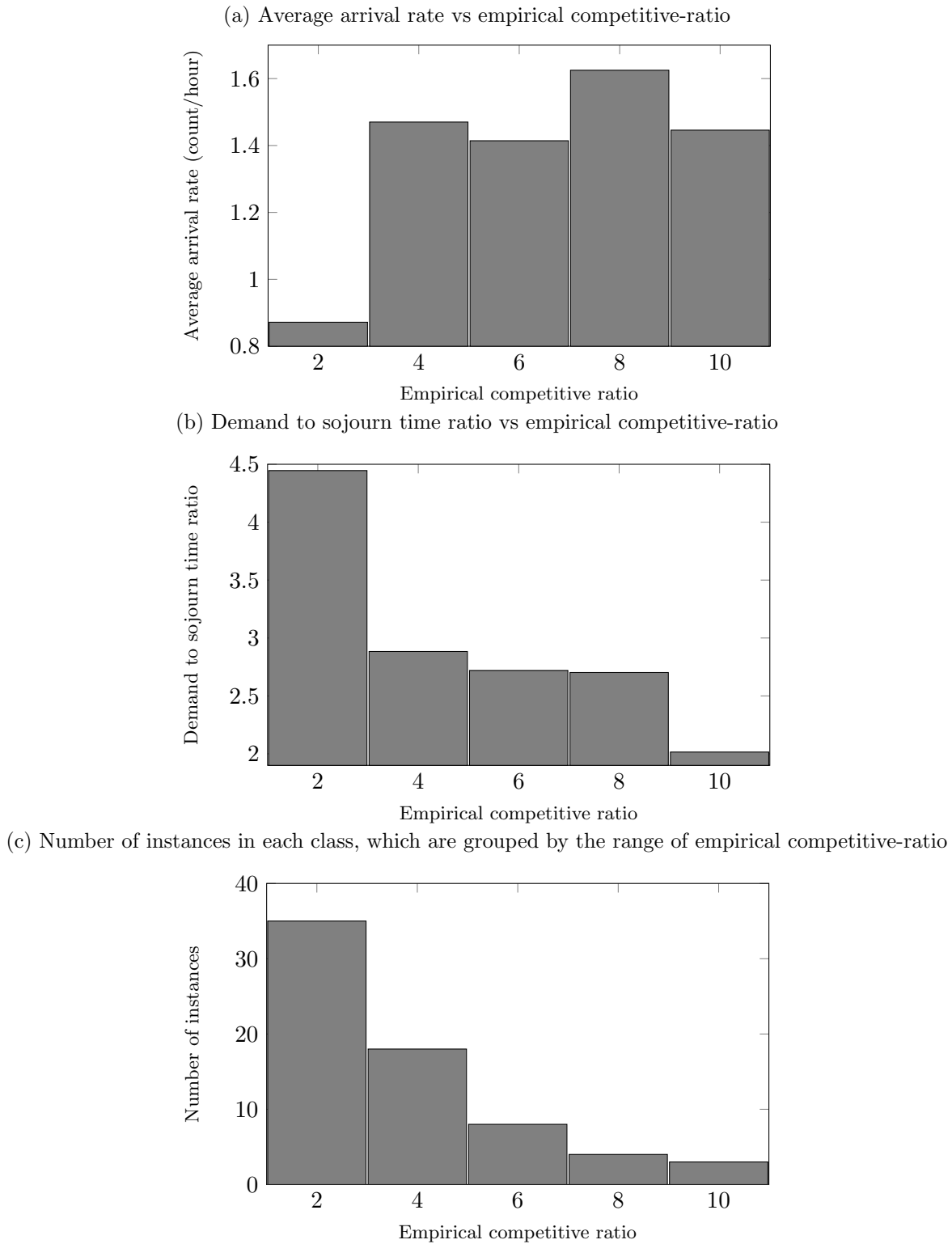
\begin{figure}
\begin{subfigure}[b]{\textwidth}
	\centering
	\caption{Average arrival rate vs empirical competitive-ratio}
	\begin{tikzpicture}

\begin{axis}[%
width=0.5\columnwidth,
height=0.3\columnwidth,
scale only axis,
bar shift auto,
log origin=infty,
xmin=1,
xmax=11,
xtick={ 2,  4,  6,  8, 10},
xlabel style={font=\footnotesize},
xlabel={Empirical competitive ratio},
ymin=0.8,
ymax=1.7,
ylabel style={font=\footnotesize},
ylabel={Average arrival rate (count/hour)},
axis background/.style={fill=white},
legend style={legend cell align=left, align=left, draw=white!15!black}
]
\addplot[ybar, fill=gray,  bar width=46] table[row sep=crcr] {%
2	0.871791170975163\\
4	1.47009135645499\\
6	1.41424798792757\\
8	1.625\\
10	1.44590062111801\\
};
\end{axis}

\end{tikzpicture}%
    \label{fig:hist2}
\end{subfigure}
\begin{subfigure}[b]{\textwidth}
	\centering
	\caption{Demand to sojourn time ratio vs empirical competitive-ratio}
%
%
\definecolor{mycolor1}{rgb}{0.00000,0.44700,0.74100}%
\begin{tikzpicture}

\begin{axis}[%
width=0.5\columnwidth,
height=0.3\columnwidth,
at={(1.011in,0.642in)},
scale only axis,
bar shift auto,
log origin=infty,
xmin=1,
xmax=11,
xtick={ 2,  4,  6,  8, 10},
xlabel style={font=\footnotesize},
xlabel={Empirical competitive ratio},
ymin=1.9,
ymax=4.5,
ylabel style={font=\footnotesize},
ylabel={Demand to sojourn time ratio},
axis background/.style={fill=white},
legend style={legend cell align=left, align=left, draw=white!15!black}
]
\addplot[ybar, fill=gray,  bar width=46] table[row sep=crcr] {%
2	4.44552945110297\\
4	2.88343124468765\\
6	2.72008697715668\\
8	2.70179364013137\\
10	2.01520190443431\\
};

\end{axis}
\end{tikzpicture}%
    \label{fig:hist3}
\end{subfigure}
\begin{subfigure}[b]{\textwidth}
	\centering
	\caption{Number of instances in each class, which are grouped by the range of empirical competitive-ratio}
	\begin{tikzpicture}

\begin{axis}[%
width=0.5\columnwidth,
height=0.3\columnwidth,
at={(1.011in,0.642in)},
scale only axis,
bar shift auto,
xmin=1,
xmax=11,
xtick={ 2,  4,  6,  8, 10},
xlabel style={font=\footnotesize},
xlabel={Empirical competitive ratio},
ymin=0,
ymax=40,
ylabel style={font=\footnotesize},
ylabel={Number of instances},
axis background/.style={fill=white},
legend style={legend cell align=left, align=left, draw=white!15!black},
]
\addplot[ybar, fill=gray,  bar width=46] table[row sep=crcr] {%
2	35\\
4	18\\
6	8\\
8	4\\
10	3\\
};
\addplot[forget plot, color=white!15!black] table[row sep=crcr] {%
1	0\\
11	0\\
};

\end{axis}
\end{tikzpicture}%
    \label{fig:hist1}
\end{subfigure}
\caption{Instance characteristics that allow Exact Scheduling to have  comparable performance with the Offline Optimal. For each instance, the ratio between the cost of Exact Scheduling and that of Offline Optimal (denoted as the empirical competitive-ratio with a slight abuse of notation\textsuperscript{\ref{fn:competitive-ratio}}) is computed. Based on this ratio, instances are grouped into 5 classes, each containing instances for which the empirical competitive-ratio ranges between $[1,3), [3, 5), [5, 7), [7, 9), [9, 11]$. For each group, the average arrival rate and average ratio of demand to sojourn time $\e/\st$ for jobs in each class are shown in (a) and (b), and the number of instances (days) in each class are shown in (c).}
\label{fig:EShist}
\end{figure}

\begin{figure}

\begin{subfigure}[b]{\textwidth}
	\centering
	\caption{Normalized cost for varying unit penalty of unmet demand.}
    \input{fig6a.tex}
	\label{fig:ESsoftdemand1} 
\end{subfigure}
       
\begin{subfigure}[b]{\textwidth}
	\centering
	\caption{Average amount of unsatisfied demands per instance for varying unit penalty of unmet demand.}
	\input{fig6b.tex}
	\label{fig:ESsoftdemand2} 
\end{subfigure}       

\caption{Generalized Exact Scheduling compared to existing algorithms in the case of soft demand constraints. For varying unit penalty $\C$, the empirical costs in all instances are shown. The top plot (a) compares the average empirical costs of all instances for varying values unit penalty $\C$. The cost is normalized by the cost of Offline Optimal (for strict service requirements).\textsuperscript{\ref{ft:cost-name}} The bottom plot (b) shows the average amount of unmet demands in one instance for varying $\C$. The parameters $c_{\text{ES}}', p_{\text{EDF}}, p_{\text{LLF}}$, and $p_{\text{FS}}$ used in Equal Service, Earliest Deadline First, Least Laxity First, and Fair Sharing are set to be the optimal offline values that minimize the average empirical costs. Note that such optimal offline values require the information of all future instances to be computed, so the estimated performance of these algorithms is optimistic.}
\label{fig:ESsoftdemand} 
\end{figure}

%
%

\begin{figure}
        
\begin{subfigure}[b]{\textwidth}
    \centering
    \caption{Normalized cost for varying unit penalty of deadline extension.}
    \input{fig7a.tex}

    \label{fig:ESsoftdeadline1} 
\end{subfigure}

\begin{subfigure}[b]{\textwidth}
\caption{Average amount of deadline extensions per instance for varying unit penalty of deadline extension.}
    \centering
    \input{fig7b.tex}
    \label{fig:ESsoftdeadline2}
\end{subfigure}       
        
\caption{Generalized Exact Scheduling compared to existing algorithms in the case of soft deadline constraints. For varying unit penalty of unmet demand $\D$, the empirical costs in all instances are shown. The top plot (a) compares the average empirical costs of all instances for varying values unit penalty $\D$. The cost is normalized by the cost of Offline Optimal (for strict service requirement).\textsuperscript{\ref{ft:cost-name}} The bottom plot (b) shows the average amount of deadline extension in one instance for varying $\D$. 
The parameters $p_{\text{EDF}}, p_{\text{LLF}}$, and $p_{\text{FS}}$ used in Earliest Deadline First, Least Laxity First, and Fair Sharing are set to be the optimal offline values that minimize the average empirical costs. Note that such optimal offline values require the information of all future instances to be computed, so the estimated performance of these algorithms is optimistic. }
\label{fig:ESsoftdeadline} 
\end{figure}

\subsection{Theoretical analysis}
\label{sec:lower-bound}

{\color{black}
In this section, we compare the performance of online distributed policies and that of online centralized policies. The design problem of centralized scheduler is typically formulated as a Markov decision process, whose optimal solution can only be approximated or computed numerically. To obtain analytic bounds, we formulate it as a constrained functional optimization problem instead. Recall from Section \ref{sec:testalgorithm} that a centralized scheduling policies has the form $\uc$ in \eqref{eq:u_cent1}. It can use all available information of jobs arriving prior to time $t$ in decision making.
The minimum-variance centralized policy can then be obtained as the solution of the following constrained functional optimization problem
\begin{align}\label{eq:centralized_min_var}
& \underset{u : \eqref{eq:rate_constraints} \eqref{eq:demand_constraints}\eqref{eq:deadline_constraints}\eqref{eq:u_cent1}}{\text{minimize}} \;\;\;\;\;  \var(P).
\end{align}
where the optimization variable is the scheduling policy of the form \eqref{eq:u_cent1}.

In order to bound the performance degradation from centralized to distributed algorithms, we first obtain the performance limits for centralized algorithms. Let $X(t)$ be the total remaining demands of jobs arriving before $t$. Let $D$ be a value that satisfies
\begin{align}
\label{eq:variance_x_constraint}
\var( X ) \leq D 
\end{align}
where $\var( X )$ is the stationary variance of $X(t)$.


\begin{lemma}\label{thm:lower_bound}
Under any centralized policy of the form \eqref{eq:u_cent1}, the stationary variance of $P^\dagger(t)$ is lower-bounded by 
\begin{align}
\label{eq:performance_lowerbound}
\var(P) \geq  \frac{1}{4 D}  \pinti ^2 \E[ \e^2  ]^2 .
\end{align}
\end{lemma}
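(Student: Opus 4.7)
The plan is to derive the bound via a single covariance identity that links $P(t)$ to $X(t)$ through the system dynamics, and then apply Cauchy--Schwarz. The starting observation is that under any feasible centralized policy satisfying the strict demand constraint \eqref{eq:demand_constraints}, every unit of arriving work must eventually be served, so in steady state the time-averaged service rate matches the time-averaged arrival rate of work, i.e.\ $\E[P]=\pinti\E[\e]$. Second, the total remaining demand evolves as
\begin{equation*}
X(t) = X(0) + A(t) - \int_0^t P(s)\,ds,
\end{equation*}
where $A(t)=\sum_{k:\at_k\le t}\e_k$ is the compound Poisson work-arrival process. This is the dynamical backbone for the argument.

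The heart of the proof will be to compute $\mathrm{Cov}(X,P)$ in closed form. First I would apply the jump-process version of It\^o's formula to $X^2$, obtaining
\begin{equation*}
X(t)^2 - X(0)^2
= -2\int_0^t X(s)\,P(s)\,ds
+ \sum_{k:\at_k\le t}\bigl[2\,\e_k\,X(\at_k^-) + \e_k^2\bigr].
\end{equation*}
Taking expectations, I would then use Campbell's theorem (or, equivalently, PASTA together with independence of the fresh mark $\e_k$ from $X(\at_k^-)$) to evaluate the sum on the right, so that $\E[\sum 2\e_k X(\at_k^-)]=2\pinti\E[\e]\E[X]\,t$ and $\E[\sum\e_k^2]=\pinti\E[\e^2]\,t$. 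Invoking stationarity of $X^2$ forces the left-hand side to vanish, and after dividing by $t$ and rearranging I obtain the clean identity
\begin{equation*}
\mathrm{Cov}(X,P)=\E[XP]-\E[X]\E[P]=\tfrac{1}{2}\,\pinti\,\E[\e^2].
\end{equation*}

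Given this identity, the bound follows immediately from Cauchy--Schwarz: $\mathrm{Cov}(X,P)^2 \le \var(X)\var(P)\le D\,\var(P)$, whence $\var(P)\ge \pinti^2\E[\e^2]^2/(4D)$, matching \eqref{eq:performance_lowerbound}.

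I expect the main obstacle to be technical rather than conceptual: justifying the interchange of expectation and summation in the Campbell/PASTA step and verifying that the stationary distribution of $(X,P)$ exists and has the requisite second moments under a general centralized policy of the form \eqref{eq:u_cent1}. These hypotheses are implicit in the statement (since $\var(X)$ and $\var(P)$ must be finite), so the argument should go through under the standing stability assumption. A subtle point worth checking is that the identity $\E[P]=\pinti\E[\e]$ truly holds for \emph{every} feasible centralized policy, which follows from integrating the demand constraint \eqref{eq:demand_constraints} across jobs and exchanging sum and time integral, a step that must be done carefully but presents no real difficulty.
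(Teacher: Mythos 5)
Your proposal is correct, and it reaches \eqref{eq:performance_lowerbound} by a genuinely different route than the paper. The paper treats the problem as a constrained functional optimization: it Lagrangian-relaxes the constraint $\var(X)\leq D$, discretizes time, solves the resulting LQR-type subproblem via a Riccati recursion (Lemma \ref{thm:lower_bound-subproblem1}) to show the relaxed infimum is $\sqrt{\gamma}\,\pinti\E[\e^2]-\gamma D$, and then optimizes over the multiplier $\gamma$. Your argument is the ``primal'' counterpart: you establish the policy-independent identity $\mathrm{Cov}(X,P)=\tfrac12\pinti\E[\e^2]$ directly from the rate-conservation law for $X^2$ (It\^o for jump processes plus Campbell/PASTA and stationarity), and then Cauchy--Schwarz together with $\var(X)\leq D$ gives the bound in one line. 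The two are tight at the same configuration --- Cauchy--Schwarz holds with equality exactly when $P-\bar P$ is a scalar multiple of $X-\bar X$, which is precisely the affine structure \eqref{eq:lower-bound-opt-solution2} that the paper's dynamic program identifies --- so they yield identical constants. What your route buys is brevity and transparency (no discretization, no limit interchanges, no Riccati fixed point, and the source of the factor $\tfrac14$ is visible); what the paper's route buys is the explicit form of the extremal policy and a template that extends to the Pareto objectives in Section \ref{sec:nonstationary}. The technical caveats you flag are the right ones: finiteness of $\E[XP]$, vanishing of the compensated-jump martingale in expectation, and the validity of $\E[P]=\pinti\E[\e]$ for every feasible centralized policy (which is the strict-demand specialization of Proposition \ref{prop:poisson_P} / Brumelle's formula, already invoked in the paper's own proof as $\bar A_{\hh}=\hh\bar P$). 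None of these presents an obstacle under the standing assumption that the stationary second moments exist.
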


\begin{corollary}
 \label{thm:lower_bound2}
Let $\var(P)$ be the stationary variance of $P(t)$ attained by Exact Scheduling \eqref{eq:exact_scheduling_st}. Let $ \var (  P^\dagger ) $ be the optimal performance among the centralized scheduling policies of the form \eqref{eq:u_cent1} that satisfy the rate, demand, and deadline constraints \eqref{eq:rate_constraints}--\eqref{eq:deadline_constraints}. Then, the following condition holds: 
\begin{align}
\label{eq:solution-Q-final}
\var(P)\leq 
\frac{  4  \E \left[  \e^2 /\st \right]  (  \E[  \st \e^2  ] +  \pinti  \E\left[ \st  \e  \right] ^2 ) }{  \E[\e^2 ]^2    } \var( P^\dagger ) .
\end{align}
\end{corollary}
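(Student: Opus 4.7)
The plan is to combine the explicit variance of Exact Scheduling from Theorem \ref{thm:1_stationary_problem1} with the centralized lower bound from Lemma \ref{thm:lower_bound} by exhibiting a universal upper bound $D$ on $\var(X^{\dagger})$ that holds simultaneously for every feasible centralized policy. Since Theorem \ref{thm:1_stationary_problem1} gives $\var(P)=\pinti\,\E[\e^{2}/\st]$ under Exact Scheduling, and Lemma \ref{thm:lower_bound} yields $\var(P^{\dagger}) \geq \pinti^{2}\E[\e^{2}]^{2}/(4D)$ whenever $\var(X^{\dagger})\leq D$, the ratio in \eqref{eq:solution-Q-final} will follow by direct division once the right $D$ is identified.

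The key step is to dominate $X^{\dagger}(t)$ pointwise by the ``unserved'' shot-noise process $X_{NS}(t) := \sum_{k} \e_{k}\,\mathbf{1}\{\at_{k}\leq t < \at_{k}+\st_{k}\}$. For any centralized policy meeting \eqref{eq:rate_constraints}--\eqref{eq:deadline_constraints}, each remaining demand $\y_{k}(t)$ is nonnegative, bounded above by $\e_{k}$ on $[\at_{k},\at_{k}+\st_{k})$ because $r_{k}\geq 0$ forces $\y_{k}$ to be monotone nonincreasing, and identically zero outside that interval because the deadline constraint \eqref{eq:deadline_constraints} forbids service after $\at_{k}+\st_{k}$. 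Summing over $k$ gives $0\leq X^{\dagger}(t)\leq X_{NS}(t)$, so $\var(X^{\dagger}) \leq \E[X^{\dagger 2}] \leq \E[X_{NS}^{2}]$. The Campbell formula for the marked Poisson arrival process then yields the closed form $\E[X_{NS}^{2}] = \pinti\,\E[\e^{2}\st] + \pinti^{2}\,\E[\e\st]^{2}$, and I will take $D$ to be this quantity.

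Substituting this $D$ into Lemma \ref{thm:lower_bound} produces $\var(P^{\dagger}) \geq \pinti\,\E[\e^{2}]^{2}/\bigl(4(\E[\e^{2}\st] + \pinti\,\E[\e\st]^{2})\bigr)$, after which dividing $\var(P)=\pinti\,\E[\e^{2}/\st]$ by this lower bound and cancelling the common factor of $\pinti$ recovers \eqref{eq:solution-Q-final}. The main obstacle is the pointwise domination argument: one has to verify carefully that feasibility forbids any policy from carrying remaining demand past the deadline or inflating it above $\e_{k}$, so that the comparison with $X_{NS}$ is legitimate for \emph{every} centralized policy and not just for the distributed ones already analyzed. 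Once this is in place, the remaining work is an application of Campbell's formula and algebraic simplification.
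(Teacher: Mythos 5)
Your proposal is correct and follows essentially the same route as the paper's proof in Appendix \ref{app:corollary2}: the paper likewise bounds the remaining demand under any feasible centralized policy by the total demand of jobs that have arrived but not yet departed (which is exactly your shot-noise process $X_{NS}$), evaluates $\E[X_{NS}^2]=\pinti\,\E[\e^2\st]+\pinti^2\E[\e\st]^2$ via the compound-Poisson second moment, and substitutes this $D$ into Lemma \ref{thm:lower_bound}. The pointwise domination step you flag as the main obstacle is handled in the paper by the observation that, under the strict demand and deadline constraints, every departed job has received its full demand, which is equivalent to your argument that $\y_k(t)\leq\e_k\,\mathbf{1}\{\at_k\leq t<\at_k+\st_k\}$.
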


Corollary \ref{thm:lower_bound2} bounds the ratio of stationary variance achievable by the optimal distributed algorithm to that achievable by any centralized algorithms. Here, both the optimal distributed algorithm and the optimal centralized algorithm are subject to strict constraints on demands \eqref{eq:demand_constraints} and deadlines \eqref{eq:deadline_constraints}. To evaluate this bound, consider a special case where both $\e$ and $\st$ are deterministic and $\e = a \st$ for some scalar $a>1$. Then bound \eqref{eq:solution-Q-final} reduces to $\var(P)\leq 
4 (1 +  \pint \st ) \var( P^\dagger ) $. This formula suggests that Exact Scheduling becomes more competitive to Centralized Optimal algorithms when arrival rate is small. This observation is consistent with the observation that large performance difference between Exact Scheduling and Offline Optimal mostly happens at instances with large arrival rate in the testbed (Figure \ref{fig:hist2}). As $ \pint \st \rightarrow 0$, the bound suggests that the cost of Exact Scheduling remains within approximately $4$ times of the cost of optimal centralized algorithm. Recall from Figure \ref{fig:ES_strict-1} that the cost of Exact Scheduling performs approximately $3$ times of that of Offline Optimal, and 1.2 times of that of Online Optimization MPC. This data suggests Exact Scheduling may perform much better than this performance lower-bound suggests. The pessimistic estimate of bound \eqref{eq:solution-Q-final} may be due to the fact that the proof of Corollary \ref{thm:lower_bound2} uses a loose bound for \eqref{eq:variance_x_constraint} (see \eqref{eq:lower-bound-0}--\eqref{eq:lower-bound-2} in the Appendix \ref{app:corollary2}). Alternatively, a tighter bound can be obtained as 
\begin{align}
\var(P)\leq 
\frac{  4 D \E \left[  \e^2 /\st \right]   }{  \E[\e^2 ]^2    } \var( P^\dagger ) .
\end{align}
where an estimate of $D$ can be computed numerically given the arrival distribution.


\subsubsection{Proof of Lemma \ref{thm:lower_bound}}
\label{sec:lower-bound-proof}

In this section, we present the proof of Lemma \ref{thm:lower_bound}. Let the stationary variance of $X(t)$ be bounded as in \eqref{eq:variance_x_constraint}. We consider the following problem:
\begin{equation*}
\label{eq:problem-offline}
\mathcal Q_{\text{on}} = \underset{\uc: \eqref{eq:u_cent1}\eqref{eq:variance_x_constraint}}{\text{minimize}} \;
 \lim_{T \rightarrow \infty} \frac{1}{T} \int_{0}^{T} \var (  \p(t) )  dt,
\end{equation*}
where the optimization is taken over all centralized policies of the form \eqref{eq:u_cent1} satisfying \eqref{eq:variance_x_constraint}.
The Lagrangian of $\mathcal Q_{\text{on}} $ is 
\begin{align}
L (\uc ; \gamma ) = \lim_{T \rightarrow \infty} \frac{1}{T} \int_{0}^{T} \var (  \p(t) )  + \gamma  ( \var(X(t)) -  D ) dt  , 
\end{align}
where $\gamma \geq 0$ is the Lagrangian multiplier associated with the constraint \eqref{eq:variance_x_constraint}. Observe that 
\begin{align}
\label{eq:lower-bound-formula}
\inf_{\uc: \eqref{eq:u_cent1}} L (\uc; \gamma) \leq \mathcal Q_{\text{on}} \leq  \var(P),
\end{align}
where $\var(P)$ is the stationary service capacity variance of any policy. 
Then, we can derive a lower bound of $\var(P)$ via solving $\inf_{\uc: \eqref{eq:u_cent1}} L (\uc; \gamma)$ as follows.
\begin{lemma}
\label{thm:lower_bound-subproblem1}
Let $\bar X $ and $\bar P$ be defined as the stationary mean of $X(t)$ and $P(t)$, respectively.  The infimum in $\inf_{\uc: \eqref{eq:u_cent1}} L (\uc ; r)$ is attained when $P(t)$ is set to be 
\begin{align}
\label{eq:lower-bound-opt-solution2}
&P(t)  = \sqrt{\gamma} ( X(t) - \bar X ) + \bar P
\end{align}
at all time $t$, and the infimum value is given by 
\begin{align}
 \label{eq:lowerbound_opt_cost3}
 \inf_{\uc: \eqref{eq:u_cent1}} L (\uc ; \gamma)  &=\sqrt{ \gamma }\pinti  \E[ \e^2  ] - \gamma D.
\end{align}
\end{lemma}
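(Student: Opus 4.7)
The plan is to recognize $\inf_{\uc} L(\uc;\gamma)$ as a linear-quadratic stochastic control problem and to solve it by completing the square, anchored on an Ito-type identity in stationarity. The key observation is that, regardless of the centralized policy chosen, $X(t)$ and $P(t)$ must satisfy a rigid covariance constraint in stationarity that comes purely from the demand balance.

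First, I would write down the demand dynamics. Let $N(t)=\sum_{k:\at_k\le t}\e_k$ be the cumulative demand arrived by time $t$, a compound Poisson process with mean rate $\pinti\E[\e]$ and variance rate $\pinti\E[\e^2]$. Demand conservation gives $X(t)=X(0)+N(t)-\int_0^t P(s)\,ds$, so in stationarity $\bar P=\pinti\E[\e]$. Define the centered processes $\tilde X(t)=X(t)-\bar X$ and $\tilde P(t)=P(t)-\bar P$.

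Second, I would derive a universal covariance identity. Applying Ito's formula for jump processes to $X(t)^2$, taking expectations, and using $\tfrac{d}{dt}\E[X(t)^2]=0$ in stationarity yields, after the $\bar X \bar P$ terms cancel via $\bar P=\pinti\E[\e]$,
\begin{equation}
\E[\tilde X(t)\,\tilde P(t)]=\tfrac{1}{2}\pinti\E[\e^2]
\end{equation}
for every stationary centralized policy of the form \eqref{eq:u_cent1}. Then, a completing-the-square step gives
\begin{equation}
\E[\tilde P^2]+\gamma\,\E[\tilde X^2]=\E\bigl[(\tilde P-\sqrt{\gamma}\,\tilde X)^2\bigr]+2\sqrt{\gamma}\,\E[\tilde P\tilde X]\ \ge\ \sqrt{\gamma}\,\pinti\E[\e^2],
\end{equation}
with equality iff $\tilde P(t)=\sqrt{\gamma}\,\tilde X(t)$, which is precisely the feedback \eqref{eq:lower-bound-opt-solution2}. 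Subtracting the constant $\gamma D$ yields \eqref{eq:lowerbound_opt_cost3}.

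The main obstacle I anticipate is the rigorous justification of the stationary Ito identity: existence of a stationary distribution of $X$ with finite second moment under the candidate feedback, and vanishing of the compensated-martingale expectations (a uniform-integrability argument). A secondary subtlety is that the affine feedback $P=\bar P+\sqrt{\gamma}\,\tilde X$ need not be nonnegative nor decomposable into per-job rates satisfying the rate, demand, and deadline constraints; since the Lagrangian relaxation in \eqref{eq:lower-bound-formula} drops those constraints, this does not affect the lower bound, but it does mean the minimizer is only attained within the relaxed class of centralized controls.
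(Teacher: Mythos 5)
Your proposal is correct, but it reaches the result by a genuinely different route than the paper. The paper discretizes time with step $\hh$, sets up a finite-horizon cost-to-go $J_n(X(t_n))$, proves by induction that it is quadratic with coefficients obeying a Riccati difference equation, identifies the fixed point of that recursion, and then passes to the limits $N\to\infty$ and $\hh\to 0$ to obtain the feedback $P(t)-\bar P=\sqrt{\gamma}\,(X(t)-\bar X)$ and the value $\sqrt{\gamma}\,\pinti\E[\e^2]-\gamma D$. You instead isolate the single structural fact that does all the work: for \emph{every} stationary policy with finite second moments, demand conservation $dX=dN-P\,dt$ together with the martingale property of the compensated compound Poisson input forces $\E[\tilde X\tilde P]=\tfrac12\pinti\E[\e^2]$, after which completing the square in $\E[\tilde P^2]+\gamma\E[\tilde X^2]$ gives the bound and the equality condition in one line. (This covariance identity is implicit in the paper only at the optimum, via its equations $\var(P)+\gamma\var(X)=2\gamma\var(X)$ and $\var(X)=\tfrac{1}{2\sqrt{\gamma}}\pinti\E[\e^2]$; you establish it universally, which is what makes the completion of squares a valid lower bound over all policies.) Your approach buys a shorter, discretization-free argument that makes the source of the trade-off transparent; the paper's DP/Riccati route buys a constructive derivation of the feedback gain without needing to guess the completed square, and its discrete-time independence step $\E[(A(t_n,\hh)-\bar A_{\hh})X(t_n)]=0$ plays exactly the role of your martingale argument. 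The two technical caveats you flag are the right ones and neither is fatal: if $\var(X)$ or $\var(P)$ is infinite the Lagrangian is $+\infty$ and the bound holds trivially, so the Ito/rate-conservation identity is only needed on the finite-second-moment class; and the attainability of the infimum only within the relaxed centralized class (ignoring nonnegativity and per-job feasibility) is precisely how the paper uses the lemma, since only the lower bound \eqref{eq:lower-bound-formula} is carried forward into Lemma \ref{thm:lower_bound}.
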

Lemma \ref{thm:lower_bound-subproblem1} is proven in Appendix \ref{sec:lemma4}. From \eqref{eq:lower-bound-opt-solution2}, the optimal solution of $\inf_{\uc: \eqref{eq:u_cent1}} L (\uc; \gamma)$ satisfies
\begin{align}
\label{eq:lowerbound_opt_cost1}
 \var (P) + \gamma  \var(X) =  2 \gamma \var(X) .
\end{align}
Combining \eqref{eq:lowerbound_opt_cost3} and \eqref{eq:lowerbound_opt_cost1} leads to \begin{align}
\label{eq:lower-bound-8}
&\var( X) = \frac{1}{2 \sqrt{ \gamma} }\pinti  \E[ \e^2  ]  .
\end{align}
Since $X(t)$ also satisfies the constraint \eqref{eq:variance_x_constraint}, the Lagrangian multiplier $\gamma$ is lower-bounded by 
\begin{align}
\label{eq:lower-bound-f} 
&\frac{1}{2  D} \pinti \E[ \e^2  ] \leq \sqrt{ \gamma} . 
\end{align}
Therefore, we obtain 
\begin{align}
\label{eq:lowerbound-final}
\var ( P )  &\geq \inf_{\uc: \eqref{eq:u_cent1}} L (\uc ; \gamma) \\
\label{eq:lowerbound-final2}
&\geq \frac{\sqrt{ \gamma} }{2}  \pinti \E[ \e^2  ] \\
\label{eq:lowerbound-final3}
&\geq  \frac{ 1}{4 D} \pinti ^2 \E[ \e^2  ]^2  .
\end{align}
where \eqref{eq:lowerbound-final} is due to \eqref{eq:lower-bound-formula}; \eqref{eq:lowerbound-final2} is due to \eqref{eq:lowerbound_opt_cost3} and \eqref{eq:lower-bound-8}; and \eqref{eq:lowerbound-final3} is due to \eqref{eq:lower-bound-f}.

}

\section{Balancing predictability and stability under non-stationary job arrivals}
\label{sec:nonstationary} 

{\color{black}
Building upon the results of stationary job arrivals, we consider a more general setting of non-stationary job arrivals in this section. The non-stationary setting is particularly appealing for practical applications since dynamic capacity management is most crucial when the workload is not stationary. 

In contrast to the stationary setting, there exists a tradeoff between maximizing the stability and predictability of the service capacity in the non-stationary setting. We characterize this tradeoff and introduce a Pareto-optimal distributed algorithm that balances stability and predictability. Below, we first formally define the notion of Pareto-optimality, which recovers maximum predictability and maximum stability as two special cases (Section \ref{sec:varying-rate-problem}). Then, at one extreme case of maximizing predictability, we show that Generalized Exact Scheduling is the optimal algorithm (Section \ref{sec:varying-rate-problem1}). In the other extreme case of maximizing stability, we characterize the optimal algorithm and notice an interesting connection to the well-known YDS algorithm \cite{yao1995scheduling}, which is optimal in a related, deterministic worst-case setting (Section \ref{sec:varying-rate-problem2}). Generalizing the two extreme cases, we describe the Pareto-optimal algorithm that balances predictability and stability (Section \ref{sec:varying-rate-problem3}). 

\subsection{Problem formulation}
\label{sec:varying-rate-problem} 

In this section, we relax our previous stationary assumptions on the arrival process. We assume that the arrival distribution is a non-stationary independently marked Poisson process with the intensity function $\tilde \pint (\at)$ and a mark joint density measure $f_\at(\e, \st) g_\at( \C ) h_\at (\D)$ (see Section \ref{sec:optimization_problems}).
We consider the following three types of policies:
\begin{align}
\label{eq:u_nonstationary_simple}
r_k(t) &= u(\at_k , \y_k(t) ,  \x_k(t) )   \geq 0  \quad k \in \hV\\
\label{eq:u_nonstationary_simple_marked}
r_k(t) &= \bar u (\at_k , \y_k(t) ,  \x_k(t) , \C_k , \D_k )   \geq 0  \quad k \in \hV\\
\label{eq:u_stat4}
r_k(t) &= v(\at_k , \e_k , \st_k,  \x_k(t) )   \geq 0  \quad k \in \hV,
\end{align}
In these forms, the scheduling policies to change over time, which allows us account for the changing arrival rate over time. They are also online and distributed in the sense that the service rate of each job is determined using only the information of the same job but not other jobs. 

We seek to design policies that balance three important performance criteria: the quality of service, the service capacity variance associated with the predictability, and its mean square associated with the stability. In the most basic settings involving the first two criteria, we consider the optimization problem
\begin{align}
\label{eq:min_variance+nonst}
& \underset{u : \eqref{eq:rate_constraints}\eqref{eq:demand_constraints}\eqref{eq:deadline_constraints}\eqref{eq:u_nonstationary_simple} }{\text{minimize}} \;\;\; \lim_{T \rightarrow \infty} \frac{1}{T} \int_{0}^{T} \var (P(t) ) dt 
\end{align}
for the case of strict service requirement and the optimization problem  
\begin{align}
\label{eq:min_variance+softdemand+softdeadline_nonst_marked}
 &\underset{\bar u:\eqref{eq:rate_constraints}\eqref{eq:u_nonstationary_simple_marked} }{\text{minimize}} \;\;\; \lim_{T \rightarrow \infty} \frac{1}{T} \int_{0}^{T}\Big( \var (P(t) ) + \E[ \U(t) ] + \E[ \Q(t) ] \Big)  dt 
\end{align}
for the case of soft service requirement. 
In a more advanced settings involving all three criteria, we consider
\begin{align}
\label{eq:cost12}
\underset{ v:\eqref{eq:rate_constraints} \eqref{eq:demand_constraints}\eqref{eq:deadline_constraints}\eqref{eq:u_stat4} }{\text{minimize}} \;\;\;&\lim_{T \rightarrow \infty} \frac{1}{T} \int_{0}^{T} \alpha \E [ P(t) ]^2 +\beta \var (P(t) )   dt
\end{align}
for the case of strict service requirement. This case also includes 
\begin{align}
\label{eq:cost11}
 \underset{v :\eqref{eq:rate_constraints} \eqref{eq:demand_constraints}\eqref{eq:deadline_constraints}\eqref{eq:u_stat4} }{ \text{minimize} } \;\;\;&\lim_{T \rightarrow \infty}\frac{1}{T} \int_{0}^{T} \E [ P(t) ]^2 dt
\end{align}
as an important special case of maximizing stability. More optimization problems can be formulated by combining the terms in~\eqref{eq:min_variance+nonst}--\eqref{eq:cost11}. Although such optimization problems are beyond the scope of this paper, our techniques can be used to analyze such problems as well. 

\subsection{Maximizing predictability}
\label{sec:varying-rate-problem1}

In this section, we consider the optimization problem of maximizing the predictability in service capacity (\ie minimizing the service capacity variance) in the settings of strict service requirement and soft service requirement. The problem allows us to systematically balance the service quality and service capacity variance and also admits insightful closed-form solutions. Recall from Section \ref{sec:hard} that $\var(P(t))$ is minimized at a flat service rate because having peaks and fluctuations in the service rate within a job's sojourn time amplifies the uncertainties of the future arrivals to cause large $\var(P(t))$. In fact, this intuition holds beyond stationary arrivals, and so does for the optimal algorithm for non-stationary arrival distribution.

\begin{theorem}
\label{thm:nonstationary_maxpred_hard}
The optimal solution of \eqref{eq:min_variance+nonst} is Exact Scheduling, defined by 
\begin{align}
\label{eq:exact_timevarying}
&u( \at, \y ,  \x  )  = 
\begin{dcases}
\frac{\y }{  \x  }  & \x > 0 \\ 
0 & \text{otherwise}
\end{dcases}.
\end{align}
\end{theorem}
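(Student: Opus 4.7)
{Proof sketch (proposal).}
The plan is to mirror the strategy used for Theorem \ref{thm:1_stationary_problem1}, carrying the argument through pointwise in the arrival time coordinate so that non-stationarity of $\tilde \pint(\at) f_\at(\e,\st)$ does not interfere.

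First, I would relax the class of admissible policies from \eqref{eq:u_nonstationary_simple} to the strictly larger class \eqref{eq:u_stat4}, i.e. policies $v(\at,\e,\st,\x)$ that may also depend on the arrival time of the job and on its full service request. As in the passage from \eqref{eq:min_variance_st} to \eqref{eq:min_variance_st_relaxed}, the optimal value of the relaxed problem lower-bounds the optimal value of \eqref{eq:min_variance+nonst}, so exhibiting a policy feasible for \eqref{eq:min_variance+nonst} that attains the relaxed optimum finishes the proof. Under \eqref{eq:u_stat4}, the rates depend only on the marks of each job; hence by the Campbell formula for (non-stationary) independently marked Poisson processes (the obvious generalization of Lemma \ref{prop:poisson}), one has, for each $t$,
\begin{align}
\var(P(t)) = \int_{\hT \times S} \tilde\pint(\at) f_\at(\e,\st)\, v(\at,\e,\st,t-\at)^2 \,\mathbf{1}\{\at \leq t < \at+\st\}\, d\at\, d\e\, d\st.
\end{align}

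Next, I would integrate in $t$, swap the order of integration by Fubini, and substitute $\x = t-\at$ so that the time-averaged objective becomes
\begin{align}
\frac{1}{T}\int_0^T \var(P(t))\, dt = \frac{1}{T}\int_{\hT\times S} \tilde\pint(\at) f_\at(\e,\st) \left\{ \int_0^{\st} v(\at,\e,\st,\x)^2\, d\x \right\} d\at\, d\e\, d\st + o(1),
\end{align}
where the $o(1)$ term accounts for the (at most $O(1/T)$) boundary effects from jobs straddling $0$ or $T$. The demand and deadline constraints \eqref{eq:demand_constraints}, \eqref{eq:deadline_constraints} force $\int_0^{\st} v(\at,\e,\st,\x)\, d\x = \e$ for each $(\at,\e,\st)$, so by Hölder's inequality (exactly as in \eqref{eq:thm1_10}), the inner integral is bounded below by $\e^2/\st$, with equality iff $v(\at,\e,\st,\x) = \e/\st$ on $(0,\st]$. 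This gives a pointwise-in-$(\at,\e,\st)$ lower bound on the objective, which is a lower bound on \eqref{eq:min_variance+nonst}.

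Finally, I would check that the relaxed minimizer $v^\star(\at,\e,\st,\x)=\e/\st$ can be realized by a policy of the form \eqref{eq:u_nonstationary_simple}. Under $v^\star$, a job started at $\at$ has remaining demand $\y(t) = \e(1-(t-\at)/\st)$ and remaining time $\x(t) = \st - (t-\at)$, so $\y(t)/\x(t) = \e/\st$; thus Exact Scheduling \eqref{eq:exact_timevarying} produces the same rate trajectory and is feasible for \eqref{eq:min_variance+nonst}. Since it attains the relaxed lower bound, it solves \eqref{eq:min_variance+nonst}.

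The main obstacle will not be the Hölder step, which is essentially identical to the stationary case, but the bookkeeping in extending Campbell's formula to non-stationary arrivals and in handling the $T\to\infty$ time-average cleanly. In particular, one must verify that the boundary jobs (those with $\at<0$ or $\at+\st>T$) contribute a vanishing share of the time-averaged variance, which uses the assumed boundedness of $S$ and the local integrability of $\tilde\pint$. Once these measure-theoretic details are in place, the pointwise Hölder argument delivers the theorem immediately. \hfill \qed
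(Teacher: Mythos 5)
Your proposal is correct, and its skeleton---relax the policy class from \eqref{eq:u_nonstationary_simple} to \eqref{eq:u_stat4}, lower-bound the relaxed problem, then observe that the minimizer is realizable as a policy of the original form---is exactly the paper's route to Theorem \ref{thm:nonstationary_maxpred_hard} via Lemma \ref{thm:dynamic_problem2}. Where you genuinely differ is in how the relaxed problem is solved after the change of variables. The paper does \emph{not} reuse the H\"older step from the stationary case; it instead forms the Lagrangian of the per-$(\e,\st)$ subproblem with multipliers for the demand and box constraints, derives the first-order stationarity condition, and rules out non-constant $v^*$ by a contradiction argument. You apply H\"older's inequality pointwise in $(\at,\e,\st)$, exactly as in \eqref{eq:thm1_10}, which is more elementary, delivers the lower bound $\e^2/\st$ together with the equality condition (constancy of $v$ in $\x$) in one stroke, and sidesteps the regularity caveats implicit in a calculus-of-variations argument; the price is that it only works because the variance objective decomposes additively over job profiles, whereas the paper's Lagrangian machinery is the version of the argument that survives when the objective couples jobs, as with the $\E[P(t)]^2$ term in Theorem \ref{thm:nonstationary_balance_hard}. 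Two small points to tidy up: the last argument of your $v$ drifts between elapsed time $t-\at$ and remaining time $\x=\at+\st-t$ (the paper's convention is the latter), which is harmless here but should be made consistent; and your $o(1)$ boundary bookkeeping is handled in the paper simply by working with a finite horizon and assuming $\pint(\at,\e,\st)=0$ for $\at\notin[0,T-\st]$, which you may as well adopt rather than arguing the boundary contribution vanishes in the limit.
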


Furthermore, Generalized Exact Scheduling is also optimal under soft demand and deadline constraints, despite the non-stationary arrival distribution. 
\begin{corollary}
\label{cor:nonstationary_maxpred_soft}
The optimal solution of \eqref{eq:min_variance+softdemand+softdeadline_nonst_marked} is 
\begin{align}
\label{eq:exact_softdemand+deadline_nonst}    
& \bar u (\at,  \y , \x , \C,\D )  = \begin{dcases}
\frac{\y }{\x}      & \text{if } \x > 0 \text{ and } \frac{\y }{\x} \leq \min\left\{  \frac{\C }{2} ,  \sqrt{\D} \right\}   \\
\frac{\C }{2}     & \text{if } \x > 0 \text{ and } \frac{\y }{\x} > \frac{\C}{2} \text{ and } \frac{\C}{2} \leq \sqrt{\D}  \\ 
\sqrt{\D} \; 1\{\y > 0\}&  \text{otherwise} 
\end{dcases},
\end{align}
\end{corollary}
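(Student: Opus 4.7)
The plan is to reduce Corollary~\ref{cor:nonstationary_maxpred_soft} to the heterogeneous stationary result, Corollary~\ref{lem:stationary_softdemand+deadline}, by a pointwise-in-arrival-time decomposition of the objective in \eqref{eq:min_variance+softdemand+softdeadline_nonst_marked}. As a first step, mirroring the relaxation used in the proof of Theorem~\ref{thm:1_stationary_problem1}, I would enlarge the policy class from $\bar u(\at,\y,\x,\C,\D)$ to the richer class $v(\at,\e,\st,\x,\C,\D)$ that is allowed to depend on the original demand and sojourn time rather than only on the current residuals. Any policy of the form \eqref{eq:u_nonstationary_simple_marked} can be represented in this richer class, so the relaxed optimum lower-bounds the original; it will suffice to exhibit a relaxed optimizer that is in fact realizable in the simpler form.

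Next, I would establish a non-stationary analogue of Lemma~\ref{prop:poisson} via Campbell's theorem applied to the independently-marked Poisson arrival process with intensity $\tilde\pint(\at) f_\at(\e,\st) g_\at(\C) h_\at(\D)$. Each of the three terms in the integrand of \eqref{eq:min_variance+softdemand+softdeadline_nonst_marked} decomposes into an integral against this intensity; writing $t=\at+(\st-\x)$ and switching the order of integration over $t$ and $\at$, the time-averaged cost takes the form
\begin{equation*}
\lim_{T\to\infty}\frac{1}{T}\int_0^T\!\!\int_{S\times C} \Phi(\at,\e,\st,\C,\D;v(\at,\cdot))\,\tilde\pint(\at)f_\at(\e,\st)g_\at(\C)h_\at(\D)\,d\e\,d\st\,d\C\,d\D\,d\at,
\end{equation*}
where for each fixed $\at$ the inner functional $\Phi$ has exactly the structural form analyzed in Theorem~\ref{thm:stationary_softdemand+deadline} and Corollary~\ref{lem:stationary_softdemand+deadline}. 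This decoupling across $\at$ is what makes a pointwise minimization legal.

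Once the objective is written this way, I would minimize it pointwise in $\at$: for every fixed arrival time the inner problem is formally identical to the stationary heterogeneous problem \eqref{eq:min_variance+softdemand+softdeadline_st_marked}, so Corollary~\ref{lem:stationary_softdemand+deadline} immediately delivers the minimizer in the form \eqref{eq:exact_softdemand+deadline_st_marked}. The key final step, exactly parallel to the conclusion of the proof of Theorem~\ref{thm:1_stationary_problem1} and of Theorem~\ref{thm:nonstationary_maxpred_hard}, is to check that this pointwise optimum is realizable in the original distributed class \eqref{eq:u_nonstationary_simple_marked}: because the optimal service rate is constant along each job's trajectory, the ratio $\y_k(t)/\x_k(t)$ remains equal to its initial value $\e_k/\st_k$ throughout the service interval, so the optimal rate can equivalently be written as a function of $(\at_k,\y_k(t),\x_k(t),\C_k,\D_k)$. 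Substitution reproduces \eqref{eq:exact_softdemand+deadline_nonst} verbatim.

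The main obstacle I anticipate is the rigorous derivation of the Campbell/Brumelle-type decomposition in the non-stationary marked setting, since the remaining-job point process is no longer stationary and the continuity equation \eqref{eq:conservation_hete_penality} does not apply directly. I would handle this either by invoking Campbell's theorem for general Poisson point processes on $\hT\times S\times C$ (which only requires independent marks, not stationarity) or by appealing to the time-varying conservation law used in the proof of Theorem~\ref{thm:nonstationary_maxpred_hard}; the rest of the argument is then a direct transcription of the stationary case.
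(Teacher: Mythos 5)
Your proposal is correct and follows essentially the same route as the paper: relax to the richer class $v(\at,\e,\st,\x,\C,\D)$, use the non-stationary Campbell-type formula (Lemma~\ref{prop:poisson-nonstationary}) together with the change of variables $\at=t+\x-\st$ to decouple the time-averaged cost into a pointwise-in-$(\at,\e,\st,\C,\D)$ functional, minimize that inner functional exactly as in the stationary soft-constraint case, and then verify realizability in the form \eqref{eq:u_nonstationary_simple_marked} via the constancy of $\y_k(t)/\x_k(t)$ along each trajectory. The paper states this argument only in outline (``can also be solved in a similar manner''), so your write-up is in fact a more explicit version of the same proof.
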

Analogously to its stationary-case counterpart in Section \ref{sec:soft_demand_deadline}, unit costs for unmet demands and deadlines $(\C_k, \D_k)$ determines the tradeoffs between reducing service capacity variance versus allowing unmet demands or deadline extension as Figure~\ref{fig:optimal policy_softdemand&deadline}. The policy operates in three regimes: 
\begin{itemize}
\item \textit{High penalties regime.} Both demands and deadlines are satisfied. The service rates are identical to the ones produced by Exact Scheduling \eqref{eq:exact_scheduling_st}.  
\item \textit{Low demand penalty regime.} All deadlines are strictly enforced with potentially unsatisfied demands. The service rates are identical to the ones produced by policy \eqref{eq:exact_softdemand_st}.
\item \textit{Low deadline penalty regime.} Demands are strictly satisfied with potential deadline extensions. The service rates are identical to the ones produced by policy \eqref{eq:exact_softdeadline_st}. 
\end{itemize}
Thus, Corollary~\ref{cor:nonstationary_maxpred_soft} also recovers the results from previous sections as the special cases: it becomes Corollary \ref{lem:stationary_softdemand+deadline} when the job arrival distribution is stationary; it becomes Theorem~\ref{thm:nonstationary_maxpred_hard} as the unit costs for unmet demands and deadlines $(\C_k, \D_k)$ approach infinity; it becomes Theorem~\ref{thm:nonstationary_maxpred_hard}.

To prove Theorem~\ref{thm:nonstationary_maxpred_hard}, we take analogous steps to Theorem \ref{thm:1_stationary_problem1}. We consider the optimization problem that relaxes the form of the scheduling policy from \eqref{eq:u_nonstationary_simple} to \eqref{eq:u_stat4}: 
\begin{align}
\label{eq:cost10_1}
 \underset{v: \eqref{eq:rate_constraints}\eqref{eq:demand_constraints}\eqref{eq:deadline_constraints}\eqref{eq:u_stat4} }{\text{minimize}} \;\;\;&  \int_{0}^{T} \var (P(t) ) dt,
\end{align}
where the time horizon $T$ is assumed to be finite. Compared with the stationary setting, obtaining a closed-form solution of \eqref{eq:cost10_1} requires additional treatment to account for the non-stationarity in arrival distribution.

\begin{lemma}
\label{thm:dynamic_problem2}
Exact Scheduling $v( \at, \e , \st,  \x )  = (\e/ \st)1\{ \x > 0 \}$ is the optimal solution of \eqref{eq:cost10_1}.
\end{lemma}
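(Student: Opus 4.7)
The plan is to mirror the proof of Lemma \ref{thm:static_problem}, but with the additional dimension of the arrival time $\at$ integrated appropriately, exploiting the fact that under an independently marked non-stationary Poisson process, disjoint jobs' service rates contribute independently to $\var(P(t))$.

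First I would apply Campbell's theorem for independently marked non-stationary Poisson processes (as in Lemma \ref{prop:poisson}) to obtain, for each fixed $t$,
\begin{equation*}
\var(P(t)) = \int_{(\e,\st) \in S} \int_{\at \,:\, \at \le t \le \at + \st} v(\at, \e, \st, \st - (t-\at))^2 \, \tilde\pint(\at) f_\at(\e,\st) \, d\at \, d\e \, d\st.
\end{equation*}
Since a job's service rate at time $t$ depends only on its own profile, the second moment of $P(t)$ decomposes as an integral of $r_k(t)^2$ weighted by the intensity, and the $\E[P(t)]^2$ term is subtracted out in exactly the same way as in Lemma \ref{prop:poisson}, leaving the displayed integral.

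Next I would use Fubini to exchange the outer $\int_0^T dt$ with the integration over $(\at, \e, \st)$, and then perform the change of variables $x = \at + \st - t$ inside the time integral, which converts $\int_{\at}^{\at+\st} v(\at,\e,\st,\st-(t-\at))^2 \, dt$ into $\int_0^\st v(\at,\e,\st,\x)^2 \, d\x$. Modulo boundary effects from jobs straddling $t = 0$ or $t = T$ (which only remove mass and hence can only reduce the objective, so they do not affect the argument that Exact Scheduling is a lower bound-achieving policy), this yields
\begin{equation*}
\int_0^T \var(P(t)) \, dt = \int_{(\e,\st) \in S} \int_{\at} \left\{ \int_0^\st v(\at,\e,\st,\x)^2 \, d\x \right\} \tilde\pint(\at) f_\at(\e,\st) \, d\at \, d\e \, d\st.
\end{equation*}

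Then I would apply Hölder's inequality pointwise in $(\at,\e,\st)$, exactly as in \eqref{eq:thm1_10}, using the demand constraint $\int_0^\st v(\at,\e,\st,\x) \, d\x = \e$ from \eqref{eq:demand_constraints} together with the deadline constraint \eqref{eq:deadline_constraints}: this gives $\int_0^\st v(\at,\e,\st,\x)^2 \, d\x \ge \e^2/\st$, with equality precisely when $v(\at,\e,\st,\x) = \e/\st$ for all $\x \in (0,\st]$. Since the constant-in-$\x$ policy $v(\at,\e,\st,\x) = (\e/\st)\mathbf{1}\{\x>0\}$ attains the lower bound in every $(\at,\e,\st)$ slice simultaneously and satisfies \eqref{eq:rate_constraints} whenever $(\e,\st) \in S$ (recall $S \subset \{\e \le \st\}$), it is optimal for \eqref{eq:cost10_1}.

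The main obstacle I anticipate is the careful handling of the boundary effects at $t=0$ and $t=T$ (jobs whose active interval only partially overlaps $[0,T]$) and the rigorous justification that the Fubini exchange and the non-stationary Campbell decomposition go through; however, because the policy $v$ is admissible in \eqref{eq:u_stat4} only through the local profile $(\at,\e,\st,\x)$, the pointwise Hölder bound dominates the objective slice-by-slice, so the standard Exact Scheduling choice remains optimal regardless of these boundary contributions.
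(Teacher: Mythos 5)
Your proposal is correct, and its skeleton --- Campbell's formula for $\var(P(t))$, Fubini plus the substitution $\at = t+\x-\st$ to decouple the objective into independent slices indexed by $(\at,\e,\st)$, and then a per-slice minimization of $\int_0^\st v(\at,\e,\st,\x)^2\,d\x$ subject to $\int_0^\st v\,d\x=\e$ --- matches the paper's proof exactly up to that last step. Where you diverge is in how the per-slice problem is solved: you invoke H\"older (Cauchy--Schwarz) pointwise in $(\at,\e,\st)$, exactly as the paper does in the \emph{stationary} case (Lemma \ref{thm:static_problem}, display \eqref{eq:thm1_10}), whereas the paper's proof of Lemma \ref{thm:dynamic_problem2} instead sets up a Lagrangian with multipliers $\mu_{\e,\st}(\at)$ for the demand constraint and $\underline\nu,\bar\nu$ for the rate bounds, derives a stationarity condition, and rules out non-constant optima by contradiction. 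Your route is shorter and arguably cleaner: since the weight $\pint(\at,\e,\st)$ does not depend on $\x$, it factors out of the inner integral, H\"older gives the bound $\e^2/\st$ per slice with equality iff $v$ is constant in $\x$, and feasibility of $v=\e/\st$ follows from $S\subset\{\e\le\st\}$; you also get uniqueness of the minimizer for free from the equality case. What the paper's variational machinery buys is reusability: the same Lagrangian template carries over to Sections \ref{sec:varying-rate-problem2}--\ref{sec:varying-rate-problem3}, where the $\E[P(t)]^2$ term couples the slices and a direct H\"older argument no longer applies. Your treatment of the $t=0,T$ boundary is looser than the paper's (which simply assumes $\pint(\at,\e,\st)=0$ for $\at\notin[0,T-\st]$ so that every job's full active interval lies in $[0,T]$), but under that assumption nothing in your argument breaks.
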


The proof of Lemma \ref{thm:dynamic_problem2} uses the following lemma. 

\begin{lemma}
\label{prop:poisson-nonstationary}
The mean and variance of $P(t)$ under the policy \eqref{eq:u_stat4} is given by 
\begin{align}
\label{eq:P-mean-v-2}
&\E[ P(t) ] = \int_{(\e,\st)\in S }  \int_{0}^\st    v(t+\x-\st,\e,\st,\x) \pinti (t+\x-\st, \e,\st) d\x d\e  d\st  \\
\label{eq:P-variance-v-2}
&\var( P(t) ) = \int_{(\e,\st)\in S }  \int_{0}^\st  v(t+\x-\st,\e,\st,\x)^2 \pinti (t+\x-\st, \e,\st) d\x d\e d\st   ,
\end{align}
\end{lemma}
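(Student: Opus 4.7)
The plan is to invoke Campbell's theorem (the Campbell–Hardy formula) for Poisson point processes, which extends to the non-stationary setting verbatim, and then perform a change of variables. This is the same strategy that proves the stationary counterpart Lemma~\ref{prop:poisson} in Appendix~\ref{sec:lem2}; only the intensity measure and the domain of integration change.

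First, I would rewrite $P(t)$ as a functional of the marked point process. Under the policy \eqref{eq:u_stat4}, a job with parameters $(\at_k,\e_k,\st_k)$ contributes to $P(t)$ the value
\begin{equation*}
g_t(\at,\e,\st) \;:=\; v(\at,\e,\st,\at+\st-t)\,\mathbf{1}\{\at \le t < \at+\st\},
\end{equation*}
so that $P(t)=\sum_{k\in\hV} g_t(\at_k,\e_k,\st_k)$. Because $\{(\at_k;\e_k,\st_k)\}$ is a (non-stationary) Poisson point process on $\hT\times S$ with intensity $\pinti(\at,\e,\st)$, Campbell's theorem immediately gives
\begin{align*}
\E[P(t)]   &= \int g_t(\at,\e,\st)\,\pinti(\at,\e,\st)\,d\at\,d\e\,d\st,\\
\var(P(t)) &= \int g_t(\at,\e,\st)^2\,\pinti(\at,\e,\st)\,d\at\,d\e\,d\st.
\end{align*}
The second identity is the content of the second-moment version of Campbell's formula, which in turn relies on the defining independence of the Poisson process over disjoint Borel sets; I would quote it rather than re-derive it.

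Next, at fixed $(\e,\st)$, I would apply the substitution $\x = \at+\st-t$. This is bijective with unit Jacobian, converts the indicator $\mathbf{1}\{\at\le t<\at+\st\}$ into $\mathbf{1}\{0\le\x<\st\}$, and maps the arrival time into $\at=t+\x-\st$. Plugging in rewrites $v(\at,\e,\st,\at+\st-t)$ as $v(t+\x-\st,\e,\st,\x)$ and $\pinti(\at,\e,\st)$ as $\pinti(t+\x-\st,\e,\st)$, yielding the stated expressions \eqref{eq:P-mean-v-2}–\eqref{eq:P-variance-v-2} after restricting the $\x$-integral to $[0,\st]$.

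The most delicate step is justifying the variance identity in the non-stationary regime, since intuitively one might expect cross terms between jobs arriving at different times. These cross terms vanish precisely because of the independence property of a Poisson point process (not because of stationarity), so the argument goes through unchanged. Integrability and finiteness of the integrals are guaranteed by the bounded rate constraint \eqref{eq:rate_constraints}, the boundedness of $S$, and the local integrability of $\tilde \pint$; no extra hypothesis on $\tilde\pint$ is required beyond what is already assumed.
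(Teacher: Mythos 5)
Your proposal is correct and follows essentially the same route as the paper: the paper's proof simply defers to the argument for Lemma \ref{prop:poisson} in Appendix \ref{sec:lem2}, which is exactly the Campbell's-theorem computation you give, with the non-stationary intensity $\pinti(\at,\e,\st)$ in place of $\pinti f(\e,\st)$ and the same unit-Jacobian substitution $\x=\at+\st-t$. Your explicit remarks on the vanishing of cross terms and on integrability are consistent with the paper's verification of condition \eqref{eq:Campbell-assumption}.
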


Lemma \ref{prop:poisson-nonstationary} can be proved following similar steps to the prove of Lemma \ref{prop:poisson} (see Appendix \ref{sec:lem2} for more detail). Now we are ready to prove Lemma \ref{thm:dynamic_problem2}.

\begin{proof}{Proof (Lemma \ref{thm:dynamic_problem2}).}

From Lemma \ref{prop:poisson-nonstationary}, the objective function of \eqref{eq:cost10_1} satisfies 
\begin{align}
\int_{0}^{T} \var ( P(t) ) dt  &= \int_{t=0}^{T}  \int_{(\e,\st)\in S }  \int_{\x=0}^\st  v(t+\x-\st, \e,\st,\x)^2 \pint(t+\x-\st,\e,\st) d\x d\e d\st dt \\
&= \int_{(\e,\st)\in S }  \left\{  \int_{\x=0}^\st \int_{t=0}^{T}  v(t+\x-\st, \e,\st,\x)^2 \pint(t+\x-\st,\e,\st) dt d\x  \right\} d\e d\st  .
\end{align}
Moreover, the constraints of \eqref{eq:cost10_1} can be rewritten into 
\begin{align}
\label{eq:sec5p-x}
&\int_{\x = 0}^{\st} v(\at, \e,\st,\x)  d\x = \e & \at \in \hT , (\e,\st) \in S\\
\label{eq:sec5p-y}
&  0 \leq v(\at, \e,\st,\x)  \leq 1 & \at \in \hT , (\e,\st) \in S, \x \in [ 0, \st ] 
\end{align}
For any $(\e,\st) \in S$, the optimal solution of \eqref{eq:cost10_1} is attained at the minimum of the following optimization problem:
\begin{align}
\label{eq:thm3_obj}
\underset{v : \eqref{eq:sec5p-x} \eqref{eq:sec5p-y} }{\text{minimize}} 	\;\;	& \int_{\x=0}^\st \int_{t=0}^{T}  v(t+\x-\st, \e,\st,\x)^2 \pint(t+\x-\st,\e,\st)  dt d\x 
\end{align}
From integration by substitution, the objection function of \eqref{eq:thm3_obj} satisfies  
\begin{align}
 \int_{\x=0}^\st \int_{t=0}^{T}  v(t+\x-\st, \e,\st,\x)^2 \pint(t+\x-\st,\e,\st)  dt d\x   &=  \int_{\x = 0}^\st  \int_{\at = \x-\st}^{T+\x-\st}  v(\at, \e,\st,\x)^2 \pint(\at,\e,\st)  d\at d\x \\
 &= \int_{\x = 0}^\st  \int_{\at = 0}^{T}  v(\at, \e,\st,\x)^2 \pint(\at,\e,\st)  d\at d\x, 
\end{align}
where the last equality is due to the assumption that $\pint(\at,\e,\st) = 0$ if $\at \notin [0,T-\st]$.
The Lagrangian of \eqref{eq:thm3_obj} is 
 \begin{align}
 \nonumber
L(v; \mu, \nu)   =& \int_{\x = 0}^\st \int_{\at = \x-\st}^{T+\x-\st} v(\at, \e,\st,\x)^2 \pint(\at,\e,\st) d\at d\x - \int_{\at = 0}^{T} \mu_{\e,\st}(\at)  \int_{\x = 0}^{\st}  v(\at, \e,\st,\x)  d\x  d\at
\\
& + \int_{\at = 0}^{T}  \int_{\x = 0}^{\st}  ( \bar \nu_{\e,\st}(\at,\x) -   \underline \nu_{\e,\st}(\at,\x) ) v(\at, \e,\st,\x)   d\x d\at ,
\end{align}
where $\mu_{\e,\st}(\at) $ is the Lagrange multiplier associated with constraint \eqref{eq:sec5p-x}; $\underline \nu_{\e,\st}(\at,\x)\geq 0$ is the Lagrange multiplier associated with the constraint $v(\at, \e,\st,\x)\geq 0$, and $\underline \nu_{\e,\st}(\at,\x)$ is the Lagrange multiplier associated with the constraint $\bar v(\at, \e,\st,\x)\leq 1$. A necessary condition for $v^*$ to be the optimal scheduling policy is that $L(v; \mu, \nu)$ is stationary at $v = v^*$. 
After some tedious manipulation, the stationary condition can be computed as follows: 
\begin{align}
v^*(\at ,  \e,\st,\x) = \frac{ \mu_{\e,\st}(\at) + \underline \nu_{\e,\st}(\at,\x) - \bar \nu_{\e,\st}(\at,\x) }{\pint(\at,\e,\st)    }. 
\end{align}
We observe that $\underline \nu_{\e,\st}(\at,\x) = 0$ when $v^*(\at, \e,\st,\x) > 0$. Combining this condition with \eqref{eq:sec5p-x} and \eqref{eq:sec5p-y} leads to
\begin{align}
\frac{ \mu_{\e,\st}(\at) - \bar \nu_{\e,\st}(\at,\x) }{\pint(\at,\e,\st)    } > 0. 
\end{align}
We first suppose $v^*(\at, \e,\st,\x) = 0$ at some $\x \in [0, \st)$. Then for that $\x$, we have
\begin{align}
v^*(\at, \e,\st,\x) = 0 &= \frac{ \mu_{\e,\st}(\at) - \bar \nu_{\e,\st}(\at,\x) }{\pint(\at,\e,\st)    } + \frac{ \underline \nu_{\e,\st}(\at,\x)  }{\pint(\at,\e,\st)    }  \\
&> \frac{ \mu_{\e,\st}(\at) - \bar \nu_{\e,\st}(\at,\x) }{\pint(\at,\e,\st)    } .
\end{align}
The last inequality cannot hold because the left hand size equals zero while the right hand side is strictly positive. So there is a contradiction, Therefore, $v^*(\at, \e,\st,\x)$ must take non-zero values at all $\x \in [0, \st)$. We then suppose that $v^*(\at, \e,\st,\x_1)  <  v^*(\at, \e,\st,\x_2)  = 1 $ for some $\x_1, \x_2 \in [0, \st)$. Then 
\begin{align}
v^*(\at, \e,\st,\x_1) = \frac{ \mu_{\e,\st}(\at)  }{\pint(\at,\e,\st)    }  > \frac{ \mu_{\e,\st}(\at) - \bar \nu_{\e,\st}(\at,\x) }{\pint(\at,\e,\st)    } = v^*(\at, \e,\st,\x_2) . 
\end{align}
This is also a contradiction, so $v^*(\at, \e,\st,\x_1)$ takes a constant value at all $\x \in [0, \st)$. Therefore, the optimal solution of \eqref{eq:cost10_1} is $v( \at, \e , \st,  \x )  = (\e/ \st)1\{ \x > 0 \}$, which is Exact Scheduling. \hfill \qed
\end{proof}

It shall be noted that the optimal value of Exact Scheduling can be represented by control policy of the form \eqref{eq:u_nonstationary_simple}, and the optimal value of \eqref{eq:min_variance+nonst} is lower bounded by that of \eqref{eq:cost10_1}. Therefore, Exact Scheduling is also optimal for \eqref{eq:min_variance+nonst}, yielding Theorem~\ref{thm:nonstationary_maxpred_hard}. 

The optimization problem \eqref{eq:min_variance+softdemand+softdeadline_st} can also be solved in a similar manner. The optimal solution of \eqref{eq:min_variance+nonst} is also the point-wise minimum of 
 \begin{align}
\int_{\at = 0}^{T} \int_{\R_+}   \int_{\R_+} \left\{ \int_{\x = 0}^\st  v(\at, \e,\st,\x)^2 +  \C v(\at, \e,\st,\x)   d\x + \D ( \hst(\at, \e, \st) - \st )  \right\}   \pint(\at,\e,\st) f(\C) f(\D)  d\C d\D d\at  .
\end{align}
From this observation, we can get Corollary~\ref{cor:nonstationary_maxpred_soft} by computing 
\begin{align}
v* = \arg \min_{v} \left\{ \int_{\x = 0}^\st  v(\at, \e,\st,\x)^2 +  \C v(\at, \e,\st,\x)   d\x + \D ( \hst(\at, \e, \st) - \st )  \right\}  
\end{align}
and converting $v^*$ to take the form \eqref{eq:u_stat4}.

\subsection{Maximizing stability}
\label{sec:varying-rate-problem2}

In this section, we consider the optimization problem of maximizing the stability in service capacity (\ie minimizing the service capacity mean square) in the settings of strict service requirements. This problem yields a scheduler that has a striking analogy to the YDS algorithm~\cite{yao1995scheduling}, an optimal scheduler in a deterministic problem. 

Recall from Section \ref{sec:stationary_systems} that achieving stability and predictability are not mutually conflicting goals when it comes to the design of distributed algorithms given a stationary arrival distribution. However, when the arrival process is non-stationary, there is a tradeoff between these two goals, and the minimum-service-capacity-variance algorithm (Exact Scheduling) does not minimize the service capacity mean square. This fact can be easily seen in the example instance of Figure~\ref{fig:ES_ex}. In this instance, the arrival rate increases over time, and Exact Scheduling is likely to incur a substantial cost at a later time (Figure~\ref{fig:ES_ex1}). Meanwhile, an ideal algorithm should account for the increment in future arrivals by serving previous jobs more aggressively than Exact Scheduling (Figure~\ref{fig:ES_ex}).

\begin{figure}
    \begin{subfigure}[b]{\textwidth}
              \centering
        \includegraphics[width=80mm]{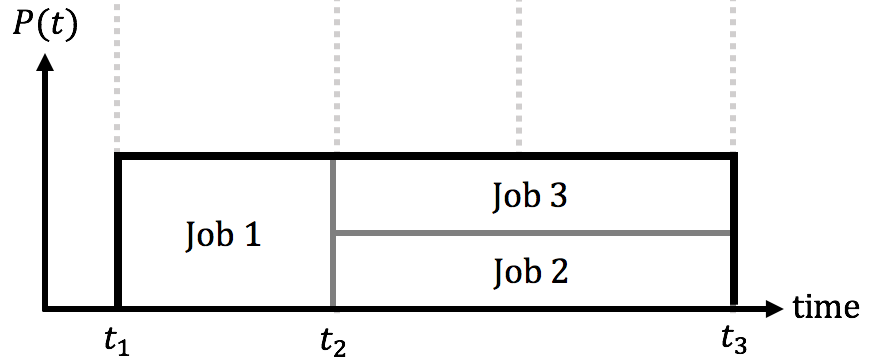} 
        \caption{The behavior of Exact Scheduling. Exact Scheduling is likely to incur a substantial cost at a later time. }
        \label{fig:ES_ex1}
    \end{subfigure}
    \\
    \begin{subfigure}[b]{\textwidth}
              \centering
        \includegraphics[width=80mm]{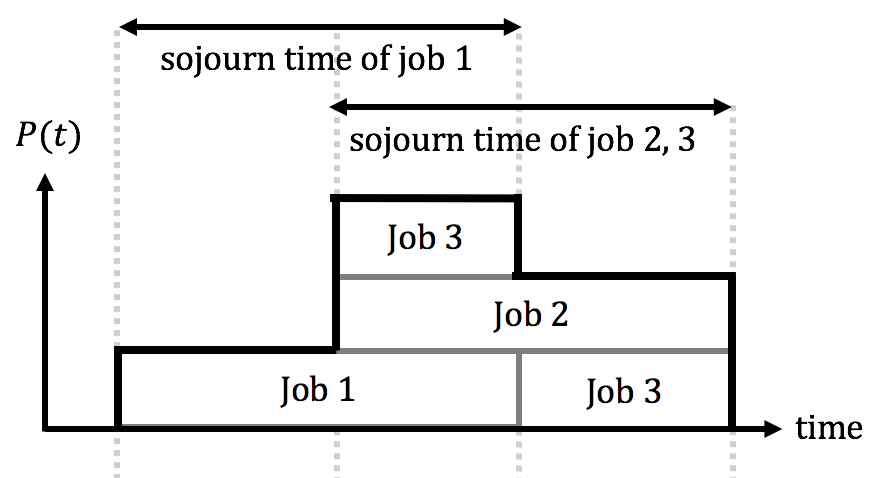}
        \caption{Ideal behavior. The service rate of job 1 is increased to account for potentially large arrivals in the future. }
        \label{fig:ES_ex2}
    \end{subfigure}
    \caption{This example demonstrates why Exact Scheduling does not maximize stability. This instance has a small arrival rate initially but higher arrival rate at a later time. }    \label{fig:ES_ex}
\end{figure}

Formally, the optimal algorithm must satisfy the condition stated below. 
\begin{corollary}
\label{lem:nonstationary_maxstab_hard}
The optimal solution of \eqref{eq:cost11} has the following properties: for each job profiles $(\at , \e,\st) \in ( \hT , S)$, 
\begin{itemize}
\item[(i)] $\E[ P(h)]$ takes a constant value for any time $h$ at which $v(\at ,\e, \st ,\at+\st-h) > 0$.
\item[(ii)] If $v(\at ,\e, \st ,\at+\st-h) > 0$ for some $h$ and $v(\at ,\e, \st ,\at+\st-h') = 0$ for some $h'$, then $\E[ P(h') ] \geq \E[ P(h)]$.
\end{itemize}
\end{corollary}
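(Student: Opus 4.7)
The optimization \eqref{eq:cost11} is convex in $v$ (quadratic objective, linear demand equality, and box constraints), and the natural attack is a first-order / KKT argument applied \emph{one job profile at a time}. Fixing a profile $(a,\sigma,\tau)$ and all other $v(a',\sigma',\tau',\cdot)$, the curve $\E[P(\cdot)]$ is determined, and the inner problem in $v(a,\sigma,\tau,\cdot)$ becomes a linear program with box constraint $v\in[0,1]$ and the mass constraint $\int_0^\tau v\,dx=\sigma$. This is precisely a water-filling problem, and conditions (i) and (ii) are its first-order / exchange conditions.

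\textbf{Gradient and reduced problem.} I would first use Lemma~\ref{prop:poisson-nonstationary} to write
\begin{align}
\E[P(t)] = \int_{(\sigma,\tau)\in S}\int_0^\tau v(t+x-\tau,\sigma,\tau,x)\,\Lambda(t+x-\tau,\sigma,\tau)\,dx\,d\sigma\,d\tau,
\end{align}
so that $\E[P(t)]$ is linear in $v$ and each unit of $v(a,\sigma,\tau,x)$ contributes to $\E[P(\cdot)]$ only at the single absolute time $h=a+\tau-x$. Differentiating $\tfrac{1}{T}\int_0^T\E[P(t)]^2\,dt$ with respect to $v(a,\sigma,\tau,x)$ then produces a first variation proportional to $\Lambda(a,\sigma,\tau)\,\E[P(a+\tau-x)]$. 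Substituting $h=a+\tau-x$, the restricted problem for profile $(a,\sigma,\tau)$ reduces to
\begin{align}
\min_{w:[a,a+\tau]\to[0,1]}\ \int_a^{a+\tau}\E[P(h)]\,w(h)\,dh \quad\text{s.t.}\quad \int_a^{a+\tau}w(h)\,dh=\sigma,
\end{align}
whose optimum puts $w=1$ on the times with the smallest values of $\E[P(h)]$ and $w=0$ on those with the largest, with at most one water-level time at which $w$ is fractional.

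\textbf{Exchange argument and main obstacle.} Conditions (i) and (ii) then follow by the standard needle-variation / interchange argument. If $v^*>0$ at two times $h,h'$ (both away from the upper bound), decreasing $v$ by $\epsilon$ on a small Lebesgue-point neighborhood of $h$ and increasing it by the same amount on one of $h'$ preserves the demand equality and stays in $[0,1]$ for both signs of $\epsilon$; the first-order change in the objective equals $\tfrac{2\epsilon}{T}\Lambda(a,\sigma,\tau)\bigl(\E[P(h')]-\E[P(h)]\bigr)$ and must vanish, giving $\E[P(h)]=\E[P(h')]$, i.e.\ (i). For (ii), the case $v^*=0$ at $h'$ allows only the one-sided perturbation that moves mass from $h$ into $h'$, so non-improvability forces $\E[P(h')]\ge\E[P(h)]$. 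The step I expect to be the main obstacle is the rigorous treatment of the upper bound $v\le1$: at a time where $v^*$ saturates at $1$ the two-sided exchange underlying (i) is blocked, so the clean water-level statement really needs that $0<v^*<1$ on a set of positive measure wherever the profile is served. Under the standing assumption $S\subset\{\sigma\le\tau\}$ there is slack $\tau-\sigma\ge0$ to spread the service, so this is generically the case; making the reduction to Lebesgue points and verifying that the saturated case does not spoil (i) is the one technical nontriviality beyond the otherwise standard water-filling argument.
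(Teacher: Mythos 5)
Your proposal is correct and is essentially the paper's own argument: the paper obtains this corollary as the $\beta=0$ specialization of Theorem~\ref{thm:nonstationary_balance_hard}, whose proof is exactly your first variation — $\E[P(t)]$ is linear in $v$, each $v(\at,\e,\st,\x)$ enters $\E[P(\cdot)]$ only at the time $h=\at+\st-\x$ with weight $\pint(\at,\e,\st)$, and stationarity of the Lagrangian together with complementary slackness of the multiplier for $v\ge 0$ yield (i) from the two-sided perturbation and (ii) from the one-sided one, which is precisely your per-profile exchange argument written with multipliers. The caveat you flag about saturation at the upper bound $v=1$ blocking the two-sided exchange for (i) is a genuine subtlety, but it is not resolved in the paper either: the Lagrangian in the proof of Theorem~\ref{thm:nonstationary_balance_hard} includes only the demand equality and the constraint $v\ge 0$ and silently omits $v\le 1$, so your treatment is, if anything, more explicit about this gap than the published proof.
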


Corollary \ref{lem:nonstationary_maxstab_hard} states that a job receives non-zero service rate only during the period at which the service capacity is expected to be low. Specifically, if the service capacity is expected to be lower at $h$ than $h'$, no service should be provided at $h'$ without first providing service at $h$ (without exploiting the expected low service capacity level at $h'$). This desired property is formalized into conditions (i) and (ii). Condition (i) flattens out the service capacity for times at which service rate is non-zero because fluctuations of $\E[ P(h)]$ during these intervals compromise stability. Condition (ii) picks the period of lowest expected service capacity to serve. In the example of Figure~\ref{fig:ES_ex}, condition (i) forces $\E [P(t)]$ to be constant during the time intervals $[t_1, t_2]$ and $[t_2, t_3]$. Condition (ii) constrains $\E[ P(h') ] \geq \E[ P(h)]$ for any $h \in [t_1, t_2),  h' \in [t_2, t_3)$ so that, during interval $[t_1, t_2]$, job $1$ is not served beyond an extent that makes $\E[ P(h) ]$ in $[t_1, t_2]$ higher than $\E[ P(h') ]$ in $[t_2, t_3]$. Consequently, the resulting service capacity has less fluctuations (Figure~\ref{fig:ES_ex2} for an intuitive illustration). 

The above property is commonly observed in a well-known class of algorithm: `Valley Filling' or `Water Filling'. This class of algorithms was proposed for many budget allocation problems such as CPU scheduling~\cite{yao1995scheduling}, temperature and energy control~\cite{bansal2007speed}, electric vehicle charging in deterministic settings~\cite{gan2013optimal,gan2013real}, Parallel Gaussian Channels~\cite[Chapter 9]{cover2012elements}, optimal packet scheduling~\cite{6253062}. Furthermore, the optimal policy in Corollary \ref{lem:nonstationary_maxstab_hard} also has an interesting similarity to the YDS algorithm~\cite{yao1995scheduling,bansal2007speed}. Specifically, the YDS algorithm is the solution of 
\begin{equation}
\label{eq:YDS_optimization_problem}
\begin{aligned}
\; \underset{r \geq 0: \eqref{eq:demand_constraints}\eqref{eq:deadline_constraints}}{ \text{minimize} } 
& &\frac{1}{T} \sum_{t = 0}^T P(t)^\alpha  
\end{aligned}
\end{equation}
where $\alpha > 1$ is some constant. The optimal solution of $\YDS$ satisfies the following conditions: for any job $k \in \hV$, 
\begin{itemize}
\item[(iii)] $P(h)$ takes a constant value at any time $h$ at which $r_k(h) > 0$.
\item[(iv)] If $r_k(h) > 0$ for some $h$ and $r_k(h') = 0$ for some $h'$, then $P(h') \geq P(h)$.
\end{itemize}
When we replace $\E[P(t)]$ with $P(t)$ and $u$ with $r$, condition (i)--(ii) in Corollary \ref{lem:nonstationary_maxstab_hard} become condition (iii)--(iv) above. This relationship allows us to adapt the computational tool of the YDS algorithm to find the optimal distributed policy in our setting. 

Algorithm \ref{alg:max_stability} finds the optimal distributed policy that maximizes stability.
Let $\hV(t_1,t_2) = \{ (\at,\e,\st) :  \at \geq t_1 , \at+\st \leq t_2 , (\e,\st) \in S \}$ be the set of job profiles that have an arrive time after $t_1$ and a deadline before $t_2$. When $(\at,\e,\st) \in \hV( t_1, t_2)$, we say that jobs with $(\at,\e,\st)$ present in the interval $[t_1,t_2]$. Let $w(t_1, t_2)$ denote the expected cumulative demand of jobs present in the interval $[t_1,t_2]$, \ie 
\begin{align}
w(t_1, t_2) = \int_{\at \geq t_1} \int_{\at+\st \leq t_2, (\e,\st) \in S}\e \pint(\at,\e,\st) d\e d\st d\at . 
\end{align}
Intuitively, $w(t_1, t_2) $ is the minimum expected demand that must be supplied during a time interval $[t_1, t_2]$ to satisfy the demand requirements. We further define the intensity of an interval $[t_1, t_2]$ as 
\begin{align}
I (t_1, t_2) = \frac{ w(t_1, t_2) } { t_2 - t_1} .
\end{align}
The algorithm finds the service rate $v^*(\at,\e,\st,\x)$ in descending order of the intensity $I (t_1, t_2) $ in which a job present.
Specifically, it iterates the following procedures. At each step, it finds a maximum intensity interval 
\begin{align}
\label{alg:step1}
[t_1, t_2] = \arg \max_{[t_1, t_2]} I (t_1, t_2)
\end{align}
is computed (line 3). Jobs present in the maximum intensity interval are served subject to 
\begin{align}
\label{eq:e-const}
&\E[ P(h) ] = \frac{ I (t_1, t_2) }{  t_2 - t_1 } \;\;\;\; h \in [t_1, t_2)\\
\label{eq:e-const2}
&v^*(\at,\e,\st,\x) \leq \bar v (\at,\e,\st,\x).
\end{align}
in ascending order of their deadlines (line 4). As jobs not present in $\hV(t_1,t_2)$ are assigned zero service capacity during $[t_1, t_2]$, the maximum service rates of jobs not present in $\hV(t_1,t_2)$ are set to be zero during $[t_1,t_2]$ (line 6). Because jobs in $\hV(t_1,t_2)$ are already scheduled, before the next iteration, they are removed from the arrival statistics $\pint(\at,\e,\st)$ as if the arrival probability of jobs present in $\hV(t_1,t_2)$ is zero (line 7). Using the modified arrival statistics, the algorithm repeats the same process of finding a new maximum intensity interval, computing the service rates of jobs present during this interval, and modifying job statistics. 

\begin{algorithm}
\SetKwInOut{Input}{input}\SetKwInOut{Output}{output}
\Input{$\pint(\at,\e,\st)$ } 
\Output{$v^*(\at,\e,\st,\x)$ } 
initialize $ \bar v (\at,\e,\st,\x) \leftarrow \infty$\;
\While{ $\pint(\at,\e,\st)  > 0$ for some $(\at,\e,\st)$}{
identify the maximum intensity interval $[t_1, t_2]$ by solving \eqref{alg:step1} \; 
compute $v^*(\at,\e,\st,\x)$ for job profiles in $\hV(t_1,t_2)$ s.t. \eqref{eq:e-const} and \eqref{eq:e-const2}  \;
\For{ $(\at,\e,\st) \notin  \hV(t_1,t_2)$ }{ 
set $\bar v(\at,\e,\st,\x)  \leftarrow 0$ for any $\at + \st - \x \in [t_1,t_2]$ \;
set $\pint(\at,\e,\st)  \leftarrow 0$\;
}}
\caption{Computing the optimal distributed policy that maximizes stability}
\label{alg:max_stability}
\end{algorithm}

We can observe that Algorithm 1 also minimizes $\max_{t \in \hT} \E[ P(t) ]$. This property is derived from the fact that the value of $\max_{t \in \hT} \E[ P(t) ]$ cannot be smaller than what is required to schedule jobs present in $\hV(t_1,t_2)$ in the first iteration because the interval $[t_1,t_2]$ found by the first iteration is the most intensive interval of an instance. Moreover, Corollary \ref{lem:nonstationary_maxstab_hard} and the optimal algorithm can be generalized to account for service capacity variance. We discuss this generalization in the next section.

\subsection{Balancing stability and predictability}
\label{sec:varying-rate-problem3}

In the previous two sections, we show the optimal policies that maximizes predictability and stability separately. Beyond the two special cases, however, balancing stability and predictability is a much more complex problem, and it is too ambitious to seek a purely analytic solution. Instead, we characterize the Pareto-optimality condition for the distributed algorithm that balances predictability and stability in this section.  

Recall that, with regard to maximizing predictability, it is favorable to have a fixed service rate over time (see Section \ref{sec:varying-rate-problem1}). Meanwhile, with regard to maximizing stability, it is desirable to have a fixed service capacity over time (see Section \ref{sec:varying-rate-problem2}). These special cases provide us with the intuition that the evenness of $r_k(t)$ and $\E[P(t)]$ may be used to balance predictability and stability. We formalize this intuition in the following theorem, which generalizes the result in Theorem~\ref{thm:nonstationary_maxpred_hard} and Corollary \ref{lem:nonstationary_maxstab_hard}.

\begin{theorem}
\label{thm:nonstationary_balance_hard}
The optimal solution of \eqref{eq:cost12} has the following properties: for each job profiles $(\at, \e,\st) \in (\hT, S)$, 
\begin{itemize}
\item[(i)] $\alpha \E[ P(h)] + \beta v(\at ,\e, \st ,\at+\st-h)$ takes a constant value for any time $h$ at which $v(\at ,\e, \st ,\at+\st-h) > 0$,
\item[(ii)] $\alpha \E[ P(h') ] \geq \alpha \E[ P(h)] + \beta v(\at ,\e, \st ,\at+\st-h) $ for any time $h' \in [\at , \at+\st]$ at which $v(\at ,\e, \st ,\at+\st-h') = 0$.
\end{itemize}
\end{theorem}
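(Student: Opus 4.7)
The proof will adapt the Lagrangian argument used for Lemma~\ref{thm:dynamic_problem2}, generalizing it to handle the additional $\alpha\,\E[P(t)]^2$ term. First, I would apply Lemma~\ref{prop:poisson-nonstationary} and the change of variables $\at = t+\x-\st$ to rewrite the objective purely as a functional of $v$:
\begin{align}
J[v] = \beta\!\!\int\!\!\!\int_{(\e,\st)\in S}\!\!\int_0^\st\!\!\int_\at v(\at,\e,\st,\x)^2 \pint(\at,\e,\st)\,d\at\,d\x\,d\e\,d\st + \alpha\int_0^T \E[P(t)]^2\,dt,
\end{align}
together with the pointwise constraints $\int_0^\st v(\at,\e,\st,\x)\,d\x = \e$ (demand) and $0\leq v(\at,\e,\st,\x)\leq 1$ (rate).

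Second, I would form the Lagrangian with multipliers $\mu_{\e,\st}(\at)$ for the demand constraint and $\underline{\nu}_{\e,\st}(\at,\x),\bar{\nu}_{\e,\st}(\at,\x)\geq 0$ for the rate bounds, and compute the functional derivative with respect to $v(\at,\e,\st,\x)$. The $\beta$-term contributes $2\beta\,v(\at,\e,\st,\x)\,\pint(\at,\e,\st)$ as in the proof of Lemma~\ref{thm:dynamic_problem2}. The key new step is the derivative of $\alpha\int_0^T \E[P(t)]^2\,dt$: since $v(\at,\e,\st,\x)$ enters $\E[P(t)]$ only at the single time $t = \at+\st-\x$ with coefficient $\pint(\at,\e,\st)$, this contributes $2\alpha\,\E[P(\at+\st-\x)]\,\pint(\at,\e,\st)$. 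Substituting $h := \at+\st-\x$, stationarity yields
\begin{align}
2\pint(\at,\e,\st)\bigl[\alpha\,\E[P(h)] + \beta\,v(\at,\e,\st,\at+\st-h)\bigr]
 = \mu_{\e,\st}(\at) + \underline{\nu}_{\e,\st}(\at,\at+\st-h) - \bar{\nu}_{\e,\st}(\at,\at+\st-h).
\end{align}

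Third, I would invoke complementary slackness to read off (i) and (ii). Fix $(\at,\e,\st)$ and assume the rate upper bound is not binding (the relevant regime here since the argument is local and the rate cap plays no role in characterizing the active set). Whenever $v(\at,\e,\st,\at+\st-h)>0$, both $\underline{\nu}$ and $\bar{\nu}$ vanish, so the left-hand side equals $\mu_{\e,\st}(\at)$, which is independent of $h$; this is exactly (i). Whenever $v(\at,\e,\st,\at+\st-h')=0$ for some $h'\in[\at,\at+\st]$, we have $\underline{\nu}\geq 0$ and $\bar\nu=0$, so $2\pint(\at,\e,\st)\,\alpha\,\E[P(h')]\geq \mu_{\e,\st}(\at)$, which, combined with the identity from (i), gives (ii).

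The main obstacle will be the non-local functional derivative of $\alpha\int_0^T \E[P(t)]^2\,dt$: because $\E[P(t)]$ aggregates contributions from infinitely many $(\at,\e,\st,\x)$-tuples while the stationarity condition must be localized at a single tuple, care is needed in tracking how the change of variables $\at=t+\x-\st$ converts the outer integral over $t$ into a pointwise evaluation at $h=\at+\st-\x$. Once this is done cleanly, the rest mirrors the Lagrangian/complementary-slackness reasoning of Lemma~\ref{thm:dynamic_problem2} and Corollary~\ref{lem:nonstationary_maxstab_hard}, which correspond to the limits $\alpha=0$ and $\beta=0$, respectively.
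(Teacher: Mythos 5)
Your proposal is correct and follows essentially the same route as the paper: rewrite the objective via the Campbell-type formula of Lemma \ref{prop:poisson-nonstationary}, change variables so that $\E[P(t)]$ is evaluated at $h=\at+\st-\x$, form the Lagrangian, and read off (i) and (ii) from stationarity plus complementary slackness; the paper phrases the stationarity step as an explicit first-order perturbation $v'=v+\epsilon\hp$, which is the same functional-derivative computation you describe, and it simply omits the upper rate bound multiplier that you carry along and then discard as non-binding.
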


When $\alpha = 0$, Theorem \ref{thm:nonstationary_balance_hard} essentially states that Exact Scheduling maximizes predictability. This is because condition (ii) cannot happen when $\alpha = 0$, so the optimality condition reduces to the case when $v(\at ,\e, \st ,\x)$ is constant at all $\x \in [ 0 , \st]$. When $\beta = 0$, the conditions for $\alpha \E[ P(h)] + \beta v(\at ,\e, \st ,\at+\st-h)$ reduces to the conditions stated in Corollary \ref{lem:nonstationary_maxstab_hard}.

\begin{proof}{Proof (Theorem \ref{thm:nonstationary_balance_hard})}
From Lemma \ref{prop:poisson}, the objective function of \eqref{eq:cost11} is equivalent to 
\begin{align}
& \int_{0}^{T} \alpha \E [ P(t) ]^2 + \beta \var ( P(t) ) dt \\
&=  \alpha   \int_{0}^{T}  \left \{ \int_{(\e,\st) \in S} \int_{\x = 0}^{\st} v( t + \x - \st ,\e,\st, \x ) \pint(t + \x - \st,\e,\st)  d\x d\e d\st \right \}^2 dt  \\
&\;\;\;\;\;+ \beta  \int_{0}^{T}  \int_{(\e,\st) \in S} \int_{\x = 0}^{\st} v( t + \x - \st ,\e,\st, \x )^2 \pint(t + \x - \st,\e,\st)  d\x d\e d\st dt  
\end{align}
Moreover, the constraints of \eqref{eq:cost12} are equivalent to 
\begin{align}
\label{eq:sec5p3-x}
&\int_{\x = 0}^{\st}  v ( \at ,\e,\st, \x ) d\x = \e , & (\e, \st) \in S , \;\; \at  \in \hT \\
\label{eq:sec5p3-u}
&v ( \at ,\e,\st, \x ) \geq 0 , &(\e, \st) \in S , \;\; \at \in \hT , \;\; \x \in [ 0 , \st ] .
\end{align}
The Lagrangian associated with problem \eqref{eq:cost11} is 
\begin{align*}
L( v ; \mu, \nu) 
=& \alpha \int_{0}^{T}  \left \{ \int_{(\e,\st) \in S} \int_{\x = 0}^{\st} v( t + \x - \st ,\e,\st, \x ) \pint(t + \x - \st,\e,\st)  d\x d\e d\st \right \}^2 dt  \\
 & + \beta \int_{(\e,\st) \in S} \int_{\x = 0}^{\st}  \int_{\at = 0}^{T} v( \at ,\e,\st, \x )^2 \pint(\at,\e,\st)  d \at d\x d\e d\st   \\
&	- \int_{\at = 0}^T \int_{(\e,\st) \in S}  \mu(\e,\st,\at)   \int_{\x = 0}^{\st}  v ( \x,\e,\st,\at )  d\x  d\e d\st d\at \\
&	 -  \int_{\at = 0}^T \int_{(\e,\st) \in S} \int_{\x = 0}^{\st}  \nu(\at, \e,\st,\x) v(\x,\e,\st,\at) d\x   d\e d\st  d\at  , 
\end{align*}
where $\mu(\e,\st,a)$ is the Lagrange multiplier associated with \eqref{eq:sec5p3-x}, and $\nu(\at, \e,\st,\x)  \geq 0$ is the Lagrange multiplier associated with \eqref{eq:sec5p3-u}. We can alternatively consider $L: U \rightarrow \real$ as a functional defined on the function space $U$ of policies. Let $U_f \subset U$ be the space of feasible scheduling policies, \ie 
\begin{align}
U_f = \{ v  :  v   \text{ satisfies }  \eqref{eq:sec5p3-x} \&\eqref{eq:sec5p3-u} \}.
\end{align}
Now we impose an infinitesimal perturbation to $v$ such that
\begin{align}
v' = v + \epsilon \hp  \in U_f.
\end{align}

Let $G: (\hT, U) \rightarrow \R$ be the following functional:
\begin{align}
G(t; v) = \E[ P(t) ]  = \int_{(\e,\st) \in S} \int_{\at = t-\st}^{t} v(\e,\st,\at,\x) \pint(\at,\e,\st) d\at d\e d\st  .
\end{align}
The difference in Lagrangian can be written as 
\begin{align}
&L( v' ; \mu, \nu) - L( v  ; \mu, \nu) \\
=&\alpha  \int_{0}^{T} 2 \;G(t; u) \left \{ \int_{(\e,\st) \in S} \int_{\at = t-\st}^{t} \epsilon \hp(\at, \e, \st, \x) \pint(\at,\e,\st)  d\at d\e d\st \right \} dt  \\ \nonumber 
&+\beta \int_{(\e,\st) \in S} \int_{\x = 0}^{\st}  \int_{\at = 0}^{T} 2 \epsilon \hp(\at, \e, \st, \x)  v( \at ,\e,\st, \x ) \pint(\at,\e,\st)  d \at d\x d\e d\st  \\		\nonumber 
&- \int_{(\e,\st) \in S} \int_{\at = 0}^T  \mu(\e,\st,\at)    \int_{\x = 0}^{\st}  \epsilon \hp(\at, \e, \st, \x)  d\x d\at d\e d\st \\	\nonumber 
&-   \int_{\x = 0}^{\st}  \nu(\e,\st,\at,\x)  \epsilon \hp (\e,\st,\at,\x)  d\x  d\at d\e d\st + O(  \epsilon^2 ) \\	
\label{eq:dym2_lem1}
=& \alpha  \int_{(\e,\st) \in S} \int_{\x = 0}^{\st} \int_{\at = 0}^{T} 2  \epsilon G(t; u) \hp(\at, \e, \st, \x)  \pint(\at,e,\st)  d\at   d\x  d\e d\st \\	\nonumber 
&+ \beta \int_{(\e,\st) \in S} \int_{\x = 0}^{\st}  \int_{\at = 0}^{T} 2 \epsilon \hp(\at, \e, \st, \x)  v( \at ,\e,\st, \x ) \pint(\at,\e,\st)  d \at d\x d\e d\st  \\	\nonumber 
&- \int_{(\e,\st) \in S} \int_{\at = 0}^T  \int_{\x = 0}^{\st}  \epsilon ( \mu(\e,\st,\at) \hp(\at, \e, \st, \x)  +  \nu(\e,\st,\at,\x)  \hp (\e,\st,\at,\x) ) d\x  d\at d\e d\st + O(  \epsilon^2 ) \\
\nonumber
=& \epsilon \int_{(\e,\st) \in S} \int_{\x = 0}^{\st} \int_{\at = 0}^{T}  \big\{ 2 ( \alpha G(t; u) +  \beta v( \at ,\e,\st, \x ) ) \pint(\at,\e,\st) -  \mu(\e,\st,\at) -  \nu(\e,\st,\at,\x) \big\} \hp(\at, \e, \st, \x)  d\at   d\x  d\e d\st \\
\label{eq:dym2_lem2} 
&+ O(  \epsilon^2 ),
\end{align}
where \eqref{eq:dym2_lem1} is obtained using integration by substitution.
For a functional $L$ to be stationary at some $v \in U_f$, the first term should be zero for any $\hp(\at, \e, \st, \x) $ satisfying the constraint $v' \in U_f$. From \eqref{eq:dym2_lem2}, the stationary point of $L$ satisfies 
\begin{align}
\label{eq:dym2_lem3} 
\alpha \E[ P(t) ] + \beta v (\at, \e, \st, \at+\st-h_1) &= \alpha G(t; u) + \beta v (\at, \e, \st, \at+\st-h_1) \\
&=  \frac{ \mu(\e,\st,\at) +  \nu(\e,\st,\at,\x) } { 2 \pint(\at,\e,\st) }  
\end{align}
for any $ \x \in [ 0 ,\st]$, $(\e, \st) \in S$, $\at\in \hT$.
Since the optimal solution of \eqref{eq:cost11} is a stationary point of $L$, \eqref{eq:dym2_lem3} is the necessary condition for optimality: 
\begin{itemize}
\item[(i)] For any job profiles $(\at, \e, \st)$, if the service rate is strictly positive at $h_1, h_2$ such that $h_1 \neq h_1$ and $h_1 , h_1 \in [\at, \at + \st]$, then
\begin{align}
\label{eq:dym2_lem4} 
v(\at-h_1+\st,\e,\st,\at) = v (\at-h_2+\st,\e,\st,\at) = 0.
\end{align}
Combining \eqref{eq:dym2_lem3} and \eqref{eq:dym2_lem4} leads to
\begin{align}
\alpha \E[ P(h_1) ]  + \beta v (\at, \e, \st, \at+\st-h_1) =  \frac{ \mu(\e,\st,\at) }{ 2 \pint(\at,\e,\st) } = \alpha \E[ P(h_2) ] + \beta v (\at, \e, \st, \at+\st-h_2) .
\end{align}
\item[(ii)] For any job profiles $(\at, \e, \st)$, if its service rate is strictly positive at $h \in [\at, \at+\st]$ and zero at $h' \in [\at, \at+\st]$, then 
 \begin{align}
\alpha \E[ P(h) ]  +\beta  v (\at, \e, \st, \at+\st-h_1) = \frac{ \mu(\e,\st,\at)  } { 2 \pint(\at,\e,\st)}  \leq  \frac{ \mu(\e,\st,\at) + \nu(\e,\st,\at,\x) } { 2 \pint(\at,\e,\st) }  = \alpha \E[ P(h') ].
\end{align}
\end{itemize}
\hfill \qed
\end{proof}
}

\section{Conclusion}

{\color{black}
As it becomes more common for service systems to have a dynamic capacity that instantaneously adapts to demand, the goal of providing a high quality of service (\eg meeting deadlines) while minimizing the variance of service capacity has received increasing attention.  Though there exists an extensive literature analyzing existing algorithms, few analytic results characterizing optimal policies were known in such settings. 

In this paper, we characterize the optimal policies in many common scenarios, stationary and non-stationary arrivals, strict or soft demands, with or without deadline extensions, and a variety of objective functions. The results highlight that novel generalizations of Exact Scheduling maximize the predictability (\ie minimizes the service capacity variance) under both stationary and non-stationary Poisson arrival processes. 

When the goal is to balance the stability and predictability, more complex policies turn out to be optimal. Such optimal policies include a novel variation of the YDS algorithm, which is the optimal algorithm for a related worst-case framework. This connection and the proof of optimality suggest a new bridge between the stochastic and worst-case scheduling communities, and this connection will be interesting to be explore in future work. 

In addition to characterizing optimal distributed policies, we also bound the gap between the performance of distributed policies and centralized policies using both theory and experiment. To derive theoretical bounds, we adapt optimal control techniques, which can also be extended to derive performance bonds for related problems. In an experiment conducted in the electrical vehicle charging testbed, we observe that the optimal distributed policies could nearly match the performance of centralized policies.  

Going forward, we note two interesting future directions of this paper. 

\textit{Non-asynmtotic optimization in related problems} Typically, the analysis of scheduling policies for non-stationary settings with deadlines has been done using asymptotic regimes, \eg heavy-traffic regimes. However, the techniques we develop in this paper do not require asymptotic approximations.  Thus, in addition to the results we have proven, our techniques are also an important contribution. We hope these techniques will inspire the discovery of other optimality results in the context of deadline 

\textit{The design space of partially-centralized schedulers} In Section \ref{sec:emp}, we made an initial attempt to explore the middle ground between fully-centralized algorithm and fully-distributed algorithms. We demonstrated numerically that Exact Scheduling PC can have a competitive performance to fully-centralized algorithms while preserving the scalability of Exact Scheduling, which is fully-distributed. Beyond this algorithms, the vast design space of partially-centralized algorithms may have great potential in balancing performance versus scalability but is scarcely explored. Therefore, we note this as an important future direction.
}

\bibliographystyle{unsrt}
\bibliography{ref1}

\newpage

\begin{APPENDICES}

\section{Proof of Lemma \ref{prop:poisson}}
\label{sec:lem2}

In this section, we present results that are useful for proving our main theorems. First, we restate one part of the Campbell's theorem, which is relevant to our proofs.
\begin{theorem}[Campbell formula for marked Poisson processes \cite{baccelliBremaud}]
\label{thm:Campbell}
Consider an independently marked Poisson point process $\{x_k\}\subset \real^d$ with intensity measure $\pinti: \real^d \rightarrow \real_+$ and marks in $\real^p$ with distribution $F(dz)$. Let $g: \real^d\times\real^p \rightarrow \real$ be a measurable function satisfying 
\begin{align}
 \label{eq:Campbell-assumption}
 \int_{ \real^d}\int_{\real^p}g(x,z)^2 \pinti (dx) F(dz)< \infty. 
\end{align}
Then, the random sum 
\begin{align}
G = \sum_{k\in\mathbb{Z}} g(x_k,z_k) 
\end{align}
is absolutely convergent with probability one and satisfies 
\begin{align}
&\E[  G ] =  \int_{\real^d}\int_{\real^p} g(x,z) \pinti (dx)F(dz) \\
&\var( G ) =  \int_{\real^d}\int_{\real^p} g(x,z)^2 \pinti (dx)F(dz).
\end{align}
\end{theorem}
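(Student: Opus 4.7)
The plan is to prove Campbell's formula by the standard approximation argument, first reducing to an unmarked Poisson process on a product space and then extending from indicators to general integrands. By the independent marking theorem of point process theory, the marked sequence $\{(x_k,z_k)\}$ is itself a Poisson point process on $\real^d\times\real^p$ with intensity measure $\mu := \pinti \otimes F$. Under this reformulation the statement reduces to showing that, for any measurable $g : \real^d\times\real^p \to \real$ with $\int g^2\,d\mu < \infty$, the random sum $G=\sum_k g(x_k,z_k)$ converges absolutely a.s.\ and satisfies $\E[G] = \int g\,d\mu$ and $\var(G) = \int g^2\,d\mu$.

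I would proceed in three stages. \emph{Indicators:} for any Borel $B \subset \real^d\times\real^p$ with $\mu(B)<\infty$, the count $N(B) := \sum_k \mathbf{1}_B(x_k,z_k)$ is Poisson with parameter $\mu(B)$ by the very definition of a Poisson point process, so $\E[N(B)]=\var(N(B))=\mu(B)$, which matches $\int \mathbf{1}_B\,d\mu = \int \mathbf{1}_B^2\,d\mu = \mu(B)$. \emph{Nonnegative simple functions:} if $g=\sum_{i=1}^m a_i \mathbf{1}_{B_i}$ for disjoint $B_i$ of finite $\mu$-measure, then the counts $N(B_1),\ldots,N(B_m)$ are mutually independent Poisson variables (another defining property of a Poisson process), so by linearity of expectation and additivity of variance across independent summands, $\E[G]=\sum_i a_i \mu(B_i)=\int g\,d\mu$ and $\var(G)=\sum_i a_i^2\mu(B_i)=\int g^2\,d\mu$, in particular giving $\E[G^2] = (\int g\,d\mu)^2 + \int g^2\,d\mu$.

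\emph{General $g$:} decompose $g = g^+ - g^-$, approximate each part from below by a monotone sequence of nonnegative simple functions $g_n \uparrow g^{\pm}$, and pass to the limit. Monotone convergence on the counting measure side and on $\mu$ propagates the mean formula. For the variance I would apply the Stage-2 identity to $g_n$ and pass to the limit via monotone/dominated convergence, using the $L^2$ hypothesis to control cross-terms. Absolute convergence of $G$ then follows from the fact that $\sum_k g^\pm(x_k,z_k)$ have finite second moments and are therefore a.s.\ finite.

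The main obstacle is that the bare hypothesis $\int g^2\,d\mu < \infty$ does not, when $\mu$ is infinite, imply $\int |g|\,d\mu < \infty$, so one cannot directly form $\E[G]$ as a Lebesgue integral without first establishing integrability. To get around this I would localize via an exhaustion $K_1 \subset K_2 \subset \cdots$ of $\real^d\times\real^p$ by sets of finite $\mu$-measure, apply Stages 1--2 to the restriction of the Poisson process to each annulus $K_n\setminus K_{n-1}$, and then combine the pieces: since Poisson integrals over disjoint sets are independent, the variances add countably to $\int g^2\,d\mu$, and a Chebyshev tail bound driven by this $L^2$ control yields a.s.\ convergence of the partial sums, after which the mean identity is recovered by taking expectations.
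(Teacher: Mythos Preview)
The paper does not prove this theorem; it is quoted without proof from Baccelli--Br\'emaud as a tool for deriving Lemma~\ref{prop:poisson}. Your three-stage approximation (indicators, then simple functions via independence of counts on disjoint sets, then monotone limits) is the standard textbook route and is correct in outline, so there is nothing in the paper to compare against.

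That said, the issue you flag in your final paragraph is real, and your proposed fix does not close it. If $g\ge 0$ with $\int g\,d\mu=\infty$ but $\int g^2\,d\mu<\infty$ (which is possible when $\mu$ is infinite), then on your exhaustion $K_n$ the centered sums $G_n-\E[G_n]$ form an $L^2$-Cauchy sequence because the variances telescope to $\int g^2\,d\mu$, but $\E[G_n]\to\infty$, so $G_n\to\infty$ almost surely and $G$ is not absolutely convergent. A Chebyshev bound controls only the deviation from the mean, not the mean itself, so it cannot deliver ``a.s.\ convergence of the partial sums'' here. In other words, the theorem as the paper states it is missing the hypothesis $\int |g|\,d\mu<\infty$; once that is added (which is how Campbell's theorem is stated in the cited reference), your Stages~1--3 go through directly and the localization detour is unnecessary. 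This does not affect the paper's application: in the argument leading to \eqref{eq:Campbell-assumption2} the rate constraint $0\le v\le 1$ and the demand bound give $\int v\,d\mu\le \pinti\,\E[\e]<\infty$, so both the $L^1$ and $L^2$ hypotheses hold.
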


Throughout, we consider a scheduling policy \eqref{eq:u_stationary_complex}, which is defined by a function $v: S  \times \real \rightarrow \real_+$ as follows:
\begin{align}
&\rate_k(t) = v(   \e_k ,   \st_k , \x_k (t)   )  &\; k\in \hV.
\end{align}
The function $v$ satisfies 
\begin{align}
\int_{\at_k}^{\infty} v(\e_k, \st_k,   \at_k  +   \st_k - t   )^2  dt 
= \int_{\at_k}^{\infty} r_k (t)^2 dt 
\leq \e_k 
\end{align}
where the maximum value of the integral is attained by Immediate Scheduling $r_k(t) = 1 \{ t \in [\at_k, \at_k + \e_k)  \}$ subject to constraint \eqref{eq:rate_constraints}. Therefore, we have
\begin{align}
 \int_{(\e,\st)\in S } \int_{\real} v(\e,\st, \x) ^2   d\x \pinti f(\e,\st)  d\e  d\st 
\label{eq:Campbell-assumption2}
\leq \pinti \int_{(\e,\st)\in S } \e    f(\e,\st)  d\e  d\st = \E[ \e ]  \pinti < \infty
\end{align}
Combining \eqref{eq:Campbell-assumption2} and Theorem \ref{thm:Campbell}, we obtain \eqref{eq:P-mean-v-2} and \eqref{eq:P-variance-v-2}:
\begin{align*}
&\E[ P(t) ] = \int_{(\e,\st)\in S }  \int_{0}^\st    v(\e,\st,\x) \pinti f(\e,\st) d\x d\e  d\st  \\
&\var( P(t) ) = \int_{(\e,\st)\in S }  \int_{0}^\st    v(\e,\st,\x)^2 \pinti f(\e,\st) d\x d\e d\st   ,
\end{align*}
which yields Lemma \ref{prop:poisson}.

\section{Proof of Proposition \ref{prop:poisson_P}} 
\label{sec:poisson_P_proof}

We observe that 
\begin{align}
\label{eq:st_meanP_fact1}
\int_{-\infty}^{\infty}\int_{0}^{\infty}  \frac{\partial}{d\x} \fint (\y,\x) \y d\y d\x 
&=\int_{0}^{\infty} \y  \left\{ \int_{-\infty}^{\infty} \frac{\partial}{d\x} \fint (\y,\x)   d\x \right\} d\y  \\
\label{eq:st_meanP_fact2}
&= - \int_{0}^{\infty} \y \lim_{L\rightarrow \infty} \fint (\y,L)  d\y  \\
\label{eq:st_meanP_fact3}
&= -  \pint  \E[  \e -  \he(\e,\st) ]
\end{align}
where \eqref{eq:st_meanP_fact1} holds because bounded $S$ implies that $\fint (\y,\infty) = 0$. 
Therefore, the stationary mean of the service capacity satisfies  
\begin{align}
\E[ P(t) ] &= \E\left[ \sum_{k \in \hV} u  (\y_k(t) , \x_k(t)) \right] \\ 
&=\int_{-\infty}^{\infty}  \int_{0}^{\infty}  \fint (\y,\x) u( \y,\x)    d\y d\x \\ 
\label{eq:st_meanP_3}
&= - \int_{-\infty}^{\infty}  \int_{0}^{\infty}\frac{d} {d \y} \left( \fint (\y,\x) u( \y,\x)  \right) \y  d\y d\x \\ 
\label{eq:st_meanP_4}
&=\int_{-\infty}^{\infty}\int_{0}^{\infty} \left( \frac{\partial }{d\x}  \fint (\y,\x) + \pinti f(\y,\x) \right) \y  d\y d\x \\
\label{eq:st_meanP_5}
&=  -  \pint \E[  \e -  \he(\e,\st) ] +  \pint  \E[ \e ] \\
&= \pint \E[  \he(\e,\st) ]
\end{align}
Here, \eqref{eq:st_meanP_3} is due to Integration by Parts, \eqref{eq:st_meanP_4} is due to \eqref{eq:conservation}, \eqref{eq:st_meanP_5} is due to \eqref{eq:st_meanP_fact1}--\eqref{eq:st_meanP_fact3}.

\section{Proof of Theorem \ref{thm:stationary_softdemand}} 

Since the constraints of \eqref{eq:min_variance+softdemand_st} is hard to solve, we first consider providing a lower bound on its optimal solution. Again, we consider the class of control policies representable by \eqref{eq:u_stationary_complex} and the optimization problem 
\begin{align}
\label{eq:cost2}
\underset{v: \eqref{eq:rate_constraints} \eqref{eq:deadline_constraints}\eqref{eq:u_stationary_complex}}{\text{minimize}}& \;\;\;\var (P ) + \E[ \U ]  .
\end{align}
Because the constraint set of \eqref{eq:cost2} contains that of \eqref{eq:min_variance+softdemand_st}, the optimal value of \eqref{eq:cost2} lower-bounds the optimal value of \eqref{eq:min_variance+softdemand_st}.
Therefore, to prove Theorem \ref{thm:stationary_softdemand}, it suffices to solve \eqref{eq:cost2} (in the next lemma) and observe that its optimal solution is representable by a control policy of the form \eqref{eq:u_stationary_simple}.

\begin{lemma}
\label{lem:stationary_softdemand}
The optimal solution of \eqref{eq:cost2} is 
\begin{align}
\label{eq:exact_policy_2_soft}
v(\e,\st,\x)  =  \min\left\{ \;\frac{ \C }{ 2 } , \;\frac{ \e }{ \st } \;\right\} 1 \left \{   \x > 0 \right \} , \end{align}
and it achieves the optimal value \eqref{eq:min_variance+softdemand_st_cost}.

\end{lemma}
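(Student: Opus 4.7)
The plan is to reduce the functional optimization to a pointwise problem in $(\e, \st)$ and then to a scalar problem in the total service delivered per job, closely mirroring the argument used in the proof of Lemma \ref{thm:static_problem}. First, I would apply Lemma \ref{prop:poisson} and the Campbell formula to the penalty term to rewrite
\begin{align*}
\var(P) + \E[\U] = \pint \int_{S} \left\{ \int_0^\st \big[ v(\e,\st,\x)^2 - \C v(\e,\st,\x) \big] d\x + \C \e \right\} f(\e,\st)\, d\e\, d\st.
\end{align*}
The inner cost decouples across $(\e,\st)$, so it suffices to minimize, for each fixed $(\e,\st) \in S$, the functional $J_{\e,\st}(v) := \int_0^\st [v(\x)^2 - \C v(\x)]\, d\x$ over $v: [0,\st] \to [0,1]$. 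Note that any $v$ with $\int_0^\st v\, d\x > \e$ can be strictly improved by truncation (it only inflates $\int v^2$ without producing a matching decrease in $-\C \int v$ once the demand has been met), so one may restrict attention to $v$ with $\int_0^\st v\, d\x \leq \e$ without loss.

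Next, I would parametrize by $e := \int_0^\st v(\x)\, d\x \in [0, \e]$ and apply the Cauchy--Schwarz (equivalently, Jensen) inequality as in \eqref{eq:thm1_10} to get
\begin{align*}
\int_0^\st v(\x)^2\, d\x \;\geq\; \frac{e^2}{\st},
\end{align*}
with equality iff $v \equiv e/\st$. This reduces the pointwise problem to the scalar optimization $\min_{e \in [0,\e]} ( e^2/\st - \C e )$. The unconstrained minimizer of this parabola is $e = \C\st/2$, so the constrained minimizer is $e^{*} = \min(\C\st/2,\, \e)$, attained at the constant rate $v^*(\x) = e^*/\st = \min\{\C/2,\, \e/\st\}\mathbf{1}\{\x>0\}$, which is exactly \eqref{eq:exact_policy_2_soft}.

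Finally, substituting $v^*$ back into the decomposed objective and splitting the integration over $S$ according to whether $\e/\st \leq \C/2$ or $\e/\st > \C/2$ gives per-job contributions $\e^2/\st$ and $\C e^* - (e^*)^2/\st = \C\e - \C^2\st/4$ respectively; reassembling against the weight $\pint f(\e,\st)$ yields the closed-form value \eqref{eq:min_variance+softdemand_st_cost}. The main subtlety I expect is formalizing the ``no over-provisioning'' reduction cleanly, since the stated problem has no explicit upper bound $\int v \leq \e$; the cleanest route is to argue by truncation that any candidate minimizer violates this inequality only on a null set. Once that is handled, the remainder is a direct application of Cauchy--Schwarz plus one-variable calculus, and the realizability of $v^*$ by the distributed form \eqref{eq:exact_softdemand_st} (used to lift Lemma \ref{lem:stationary_softdemand} to Theorem \ref{thm:stationary_softdemand}) follows because the ratio $\y_k(t)/\x_k(t)$ remains on the same side of the threshold $\C/2$ throughout a job's sojourn under $v^*$.
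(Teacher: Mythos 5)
Your argument is correct and is essentially the paper's proof: both reduce to a per-$(\e,\st)$ problem via Campbell's formula, fix the total delivered service $\he(\e,\st)$, invoke the Cauchy--Schwarz step from Lemma \ref{thm:static_problem} to show a constant rate $\he/\st$ is optimal for that total, and then minimize the parabola $\he^2/\st + \C(\e-\he)$ over $\he\in[0,\e]$ to obtain $\he^{*}=\min\{\C\st/2,\e\}$ (the paper simply imposes $\he\le\e$ as the physical constraint \eqref{eq:ehat_constraint} rather than deriving it by your truncation remark). The only blemish is the intermediate expression $\C e^{*}-(e^{*})^2/\st$, which should read $(e^{*})^2/\st+\C(\e-e^{*})$; the stated value $\C\e-\C^{2}\st/4$ is nonetheless correct and matches \eqref{eq:min_variance+softdemand_st_cost}.
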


\begin{proof}{Proof.}
First, we derive an analytical formula for $\E[ \U ]$ as a function of the scheduling policy $v$. Let 
\begin{align}
\label{eq:he_def}
\he(\e,\st) = \int^\st_{-\infty} v(\e,\st,\x)  d\x, 
\end{align}
be the actual amount of service received by a job with demand $\e$ and sojourn time $\st$. The amount of unsatisfied demand for this job is $\e - \he(\e,\st)$. Additionally, $\he(\e,\st)$ satisfies 
\begin{align}
\label{eq:ehat_constraint}
&0 \leq\he(\e,\st) \leq \e, &\forall (\e,\st) \in S.
\end{align}
Consequently, the stationary mean of $U$ satisfies 
\begin{align}
\E[ \U ] &= \lim_{t \rightarrow \infty} \E \left[ \sum_{k \in \hV: \at_k + \st_k = t} ( \e_k - \he(\e_k,\st_k) ) \right] \\
\label{eq:U_value}
&=   \int_{(\e,\st) \in S}( \e- \he(\e,\st) ) \pinti f(\e,\st) d\e d\st 
\end{align}
Then, we use \eqref{eq:U_value} to rewrite \eqref{eq:cost2} as follows
\begin{align}
\label{eq:thm2_cost1}
&  \inf_{v: \eqref{eq:rate_constraints} \eqref{eq:deadline_constraints}\eqref{eq:u_stationary_complex}} \;\; \var(P) + \; \E[ \U ]  \\
\label{eq:thm2_cost2}
&= \inf_{\substack{\he: \eqref{eq:ehat_constraint} }} \left[\inf_{v: \eqref{eq:rate_constraints} \eqref{eq:deadline_constraints}\eqref{eq:u_stationary_complex}\eqref{eq:he_def}} \var(P) + \C\int_{(\e,\st) \in S}( \e- \he(\e,\st) )  \pinti f(\e,\st) d\e d\st \right]\\
\label{eq:thm2_cost3}
&= \inf_{\substack{\he:\eqref{eq:ehat_constraint} }} \left[ \left\{ \inf_{v: \eqref{eq:rate_constraints} \eqref{eq:deadline_constraints}\eqref{eq:u_stationary_complex}\eqref{eq:he_def}}\var(P) \right\}+ \C\int_{(\e,\st) \in S}( \e- \he(\e,\st) )  \pinti f(\e,\st)  d\e d\st \right].
\end{align}
Equality \eqref{eq:thm2_cost3} holds because, constrained on $\he(\e,\st) = \int^\st_0 v(  \e ,\x, \st ) d\x$ for some fixed $\he$, the second term of \eqref{eq:thm2_cost2} is not a function of $v$. From Lemma \ref{thm:static_problem}, the first term of \eqref{eq:thm2_cost3} admits the closed-form expression 
\begin{align}
\label{eq:thm2_cost4}
\inf_{v : \eqref{eq:rate_constraints} \eqref{eq:deadline_constraints}\eqref{eq:u_stationary_complex}} \var(P)  = \int_{(\e,s) \in S} \frac{ \he(\e,\st)^2 }{\st} \pinti f(\e,\st) d\e d\st ,
\end{align}
which is attained by 
\begin{align}
\label{eq:exact_policy_s}
& v(\e,\st,\x)  = \frac{ \he(\e,\st) }{ \st } .  
\end{align}
Substitute \eqref{eq:thm2_cost4} into \eqref{eq:thm2_cost3} yields
\begin{align}
\label{eq:thm2_cost5}
 \inf_{\substack{\he:\eqref{eq:ehat_constraint} }} \int_{(\e,\st) \in S} \left\{ \frac{ \he(\e,\st)^2 }{\st} + \C(\e- \he(\e,\st)) \right\}\pinti f(\e,\st)  d\e d\st ,
\end{align}
where the optimization variable is now $\he$ instead of $v$. To derive a closed-form solution of \eqref{eq:cost2}, we can minimize the integrand of \eqref{eq:thm2_cost5} point-wisely. By doing so, we observe that, for each $(\e,\st) \in S$, a necessary and sufficient condition for optimality is 
\begin{align}
\label{eq:exact_policy_s1} 
 \he(\e,\st) = \text{arg} \inf_{ \he:\eqref{eq:ehat_constraint} } \frac{ \he(\e,\st)^2 }{\st} + \C(\e- \he(\e,\st)) = \min \left\{\frac{ \C\st}{ 2} , \e \right\} . 
\end{align}
Combining \eqref{eq:exact_policy_s} and \eqref{eq:exact_policy_s1}, we obtain that \eqref{eq:exact_policy_2_soft} is the optimal solution of \eqref{eq:cost2}. Substitute \eqref{eq:exact_policy_2_soft} into \eqref{eq:thm2_cost5}, we obtain its optimal value \eqref{eq:min_variance+softdemand_st_cost}. \hfill \qed

\end{proof}

Given Lemma \ref{lem:stationary_softdemand}, Theorem \ref{thm:stationary_softdemand} can be derived as follows. It can be verified that scheduler \eqref{eq:exact_policy_2_soft} can be realized as \eqref{eq:exact_softdemand_st} using a scheduling policy of the form \eqref{eq:exact_softdemand_st}. This implies that the optimal solution of problem \eqref{eq:cost2} also lies within the constraint set of problem \eqref{eq:min_variance+softdemand_st}. Because the cost attained by scheduler \eqref{eq:exact_policy_2_soft} is a lower bound on the optimal value of problem \eqref{eq:min_variance+softdemand_st}, the optimal solution of problem \eqref{eq:min_variance+softdemand_st} is scheduler \eqref{eq:exact_softdemand_st}.

\section{Proof of Theorem \ref{thm:stationary_softdeadline}}

Since the constraints of \eqref{eq:min_variance+softdeadline_st} is hard to solve, we first consider providing a lower bound on its optimal solution. Again, we consider the class of control policies representable by \eqref{eq:u_stationary_complex} and the optimization problem 
\begin{align}
\label{eq:min_variance+softdeadline_st1}
\underset{v: \eqref{eq:rate_constraints} \eqref{eq:demand_constraints}\eqref{eq:u_stationary_complex} }{\text{minimize}}& \;\;\;\var (P) + \E[ \Q ]  .
\end{align}
Because the optimal value of \eqref{eq:min_variance+softdeadline_st1} lower-bounds that of \eqref{eq:min_variance+softdemand_st}, to prove Theorem \ref{thm:stationary_softdeadline}, we can solve \eqref{eq:min_variance+softdeadline_st1} (in the next lemma) and observe that its optimal solution is representable by a control policy of the form \eqref{eq:u_stationary_simple}. 
\begin{lemma}
\label{lem:stationary_softdeadline}
The optimal solution of \eqref{eq:min_variance+softdeadline_st1} is 
\begin{align}
\label{eq:exact_softdeadline_st1}
& v( \e ,\st, \x )  =  \begin{dcases}
\frac{ \e }{ \st } \;1\{ \x > 0 \} & \text{ if } \frac{\e}{\st} \leq  \sqrt{\D}   \\
\sqrt{\D} \;1 \left\{ \x >  \st - \frac{ \e }{ \sqrt{\D} } \right\}  & \text{ otherwise }
 \end{dcases}.
\end{align}
and it achieves the optimal value \eqref{eq:min_variance+softdeadline_st_cost}.
\end{lemma}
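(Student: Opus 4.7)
The plan is to mirror the proof strategy of Theorem \ref{thm:stationary_softdemand}, but with the roles of the demand constraint and deadline constraint interchanged. Introduce the actual sojourn time $\hst(\e,\st)$ as an auxiliary variable that determines how far beyond $\st$ the support of $v(\e,\st,\cdot)$ extends; by definition $\hst(\e,\st) \geq \st$. Since the demand constraint \eqref{eq:demand_constraints} is still in force, $v(\e,\st,\cdot)$ must be a nonnegative function on $[0,\hst(\e,\st)]$ integrating to $\e$. Using Lemma \ref{prop:poisson} (extended in the obvious way to allow integration up to $\hst$), we can write
\begin{align}
\var(P) + \E[\D \Q]
= \pinti \int_{(\e,\st)\in S}\!\left\{\int_{0}^{\hst(\e,\st)}\! v(\e,\st,\x)^2 \, d\x + \D\,(\hst(\e,\st)-\st)\right\} f(\e,\st)\, d\e\, d\st.
\end{align}

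Next, I would split the minimization into an inner problem over $v$ with $\hst$ fixed, and an outer problem over $\hst \geq \st$. The inner problem, for each $(\e,\st)$, is
\begin{align}
\inf_{v \geq 0,\; \int_0^{\hst} v\, d\x = \e} \int_0^{\hst} v(\e,\st,\x)^2\, d\x,
\end{align}
which by the same Hölder-inequality argument as in Lemma \ref{thm:static_problem} equals $\e^2/\hst$, and is attained at the constant rate $v(\e,\st,\x) = (\e/\hst)\mathbf{1}\{\x\in(0,\hst]\}$. The outer problem then reduces to a pointwise scalar optimization
\begin{align}
\inf_{\hst \geq \st} \; \frac{\e^2}{\hst} + \D\,(\hst - \st).
\end{align}

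Third, I would solve this scalar problem in closed form. The unconstrained stationary point of the convex objective is $\hst^{\star} = \e/\sqrt{\D}$, with value $2\sqrt{\D}\,\e - \D\,\st$. This solution is feasible precisely when $\e/\sqrt{\D} \geq \st$, i.e.\ $\e/\st > \sqrt{\D}$; otherwise the objective is strictly increasing on $[\st,\infty)$ and the constrained minimum is attained at the boundary $\hst = \st$ with value $\e^2/\st$. Plugging these two regimes back into the constant-rate form gives exactly \eqref{eq:exact_softdeadline_st1}: rate $\e/\st$ on $[0,\st]$ when $\e/\st \leq \sqrt{\D}$, and rate $\sqrt{\D}$ on $[\,\st - \e/\sqrt{\D},\,\st\,]$ otherwise. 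Integrating the pointwise optimal cost against $\pinti f(\e,\st)$ produces the expression \eqref{eq:min_variance+softdeadline_st_cost}.

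Finally, to upgrade Lemma \ref{lem:stationary_softdeadline} to Theorem \ref{thm:stationary_softdeadline}, I would observe that in both regimes the service rate is precisely $\min(\y/\x,\sqrt{\D})$ as long as remaining demand is positive: under Exact Scheduling the ratio $\y/\x$ is invariant along the trajectory and equals $\e/\st$, whereas in the saturated regime the rate is capped at $\sqrt{\D}$ until the remaining demand hits zero. Thus the optimal $v$ lies in the more restricted class \eqref{eq:u_stationary_simple}, specifically \eqref{eq:exact_softdeadline_st}, and since the optimal value of \eqref{eq:min_variance+softdeadline_st1} lower bounds that of \eqref{eq:min_variance+softdeadline_st}, the latter is attained by \eqref{eq:exact_softdeadline_st}. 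The main obstacle I expect is justifying the inner/outer decomposition rigorously, in particular verifying that defining $\hst(\e,\st)$ as the support boundary of $v$ is the right reparametrization so that the deadline penalty $\D(\hst-\st)$ is invariant to how the service mass is distributed within $[0,\hst]$; this follows from the definition $\hst_k = \max\{t-\at_k : r_k(t) > 0\}$ combined with the Campbell-formula averaging over the stationary Poisson profile.
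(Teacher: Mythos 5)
Your proposal is correct and follows essentially the same route as the paper's own proof: it introduces the actual sojourn time $\hst(\e,\st)$ as the support boundary of $v$, decomposes the problem into an inner minimization over $v$ (solved by the H\"older/constant-rate argument of Lemma \ref{thm:static_problem}, yielding $\e^2/\hst$) and an outer pointwise scalar minimization over $\hst \geq \st$, and then verifies realizability within the class \eqref{eq:u_stationary_simple} by checking that $\y/\x$ stays above $\sqrt{\D}$ along the trajectory. The only cosmetic difference is your phrasing of the optimal rate as $\min(\y/\x,\sqrt{\D})$ while $\y>0$, which must be read with the caveat that $\x$ can become negative in the extended regime, exactly as the paper's case analysis handles it.
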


\begin{proof}{Proof.}
With a slight abuse of notation, let 
\begin{align}
\label{eq:def-st-hat}
\hst ( \e,\st ) = 
\begin{cases}
\st & \text{if }\;\; v (  \e ,   \st , \x  ) = 0, \;  \forall \x < 0 \\
\st - \min \{ \bar \x : v (  \e ,   \st , \x  ) = 0 , \forall \x \leq \bar \x \}  & \text{otherwise} \\
\end{cases}
\end{align}
denote the actual sojourn time for jobs having a service demand $\e$ and a sojourn time $\st$. Then, the stationary mean of $\Q$ satisfies 
\begin{align}
\E[ \Q ] &=  \D \int_{(\e,\st) \in S}  ( \hst ( \e,\st  ) - \st ) \pinti f(\e,\st) d\e d\st .
\end{align}
The optimization problem \eqref{eq:min_variance+softdeadline_st1} can then be written into
\begin{align}
\label{eq:thm3_cost3}
&  \inf_{v:   \eqref{eq:rate_constraints} \eqref{eq:demand_constraints} \eqref{eq:u_stationary_complex}} \;\; \var(P) +  \E[\D \Q ]   \\
\label{eq:thm3_cost4} 
&= \inf_{\substack{\hst \geq  \st}} \left[ \left\{ \inf_{v:  \eqref{eq:rate_constraints} \eqref{eq:demand_constraints} \eqref{eq:u_stationary_complex} } \var(P) \right\} + \D  \int_{(\e,\st) \in S} ( \hst ( \e,\st  ) - \st ) \pinti f(\e,\st) d\e d\st \right]  \\
\label{eq:thm3_cost5} 
&= \inf_{\substack{\hst \geq  \st}} \int_{(\e,\st) \in S} \left\{ \frac{ \e^2 }{\hst } + \D ( \hst ( \e,\st  ) - \st ) \right\}\pinti f(\e,\st)  d\e d\st ,
\end{align}
where $\inf_{v: \eqref{eq:rate_constraints} \eqref{eq:demand_constraints} \eqref{eq:u_stationary_complex} } \var(P)$ in \eqref{eq:thm3_cost4} is attained by 
\begin{align}
\label{eq:thm3_solution1}
v( \e ,\st, \x ) = \frac{ \e }{ \hst ( \e,\st  ) } .
\end{align}
The optimal choice of deadline extensions $\hst^\star ( \e,\st  )$ is the point-wise maximum of the integrand of \eqref{eq:thm3_cost5}, \ie 
\begin{align}
\label{eq:thm3_solution2}
 \hst^\star(\e,\st) = \text{arg} \inf_{ \he:\eqref{eq:ehat_constraint} } \frac{ \e^2 }{\hst } + \D ( \hst ( \e,\st  ) - \st )= \left\{  \frac{\e}{\sqrt{\D}}, \st \right\} . 
\end{align}
Combining \eqref{eq:thm3_solution1} and \eqref{eq:thm3_solution2}, we obtain \eqref{eq:exact_softdeadline_st1} as the closed-form solution of \eqref{eq:min_variance+softdeadline_st1}.  \hfill \qed

\end{proof}

Given Lemma \ref{lem:stationary_softdeadline}, we are now ready to prove Theorem \ref{thm:stationary_softdeadline}.

\begin{proof}{Proof (Theorem \ref{thm:stationary_softdeadline})}

Recall that the optimal value of problem~\eqref{eq:min_variance+softdeadline_st1} lower-bounds the optimal value of problem~\eqref{eq:min_variance+softdeadline_st}. Therefore, if there is a policy of the form \eqref{eq:u_stationary_simple} that produces identical service rates to \eqref{eq:exact_softdeadline_st1}, it is also optimal for problem~\eqref{eq:min_variance+softdeadline_st}. Next, we show that the policy \eqref{eq:exact_softdeadline_st} satisfies the above description. 

Given any job $k \in \hV$ with $\e \leq \st \sqrt{\D}$, both \eqref{eq:exact_softdeadline_st} and \eqref{eq:exact_softdeadline_st1} produce the service rates $r_k (t) = \e_k/ \st_k$ if $t \in [\at_k , \at_k+\st_k]$ and $r_k (t) = 0$ otherwise. Given any job $k \in \hV$ with $\e > \sqrt{\D}  \st$, \eqref{eq:exact_softdeadline_st1} produces the service rates $r_k (t) = \sqrt{\D}$ if $t \in [\at_k , \at_k+\e/\sqrt{\D}\;]$ and $r_k (t) = 0$ otherwise. Observe that under the policy \eqref{eq:exact_softdeadline_st}, for any $\x(t) > 0$, we have
\begin{align}
\frac{ \y(t) }{ \x(t) }  - \frac{ \e }{ \st } 
= \frac{ \e - \sqrt{\D} (t-\at)  }{ \st - (t-\at) }  - \frac{ \e }{ \st }   
\geq  \frac{( \e/\st- \sqrt{\D} )(t-\at)  }{ \st - (t-\at) } \geq 0,
\end{align}
where the third inequality is due to $\st \geq \sqrt{\D}$. Thus, the policy \eqref{eq:exact_softdeadline_st1} also produce the service rates $r_k (t) = \sqrt{\D}$ if $t \in [\at_k , \at_k+\e/\sqrt{\D}\;]$ and $r_k (t) = 0$ otherwise. 

\end{proof}

\section{Proof of Theorem \ref{thm:stationary_softdemand+deadline}} 

We first consider providing a lower bound of problem~\eqref{eq:min_variance+softdemand+softdeadline_st} by solving the optimization problem 
\begin{align}
\label{eq:min_variance+softdemand+softdeadline_st1}
\underset{v: \eqref{eq:rate_constraints}  \eqref{eq:u_stationary_complex}}{\text{minimize}}& \;\;\;\var (P ) + \E[  \U ] + \E[ \Q ]  .
\end{align}
The solution of problem~\eqref{eq:min_variance+softdemand+softdeadline_st1} is given in the next lemma, which is also a feasible policy for the constraint set of problem~\eqref{eq:min_variance+softdemand+softdeadline_st}. 

\begin{lemma}
\label{lem:stationary_softdemand+deadline_new}
The optimal solution of problem~\eqref{eq:min_variance+softdemand+softdeadline_st1} is 
\begin{align}
\label{eq:exact_softdemand+deadline_st1}
& v( \e ,\st, \x )  = \begin{dcases}
\frac{ \e}{ \st } 1 \left\{ \x > 0 \right\} &  \text{if } \frac{\e}{\st} \leq \min \left \{ \frac{\C }{2} ,  \sqrt{\D} \right\}   \\
\frac{\C}{2} 1 \left\{ \x > 0 \right\} & \text{if } \frac{\e}{\st} >  \frac{\C }{2}  \text{ and }   \frac{\C }{2} \leq \sqrt{\D}  \\
\sqrt{\D} 1 \left\{ \x > \st - \frac{\e}{\sqrt{\D}} \right\} &\text{otherwise}
\end{dcases}.
\end{align}
and it achieves the optimal value \eqref{eq:min_variance+softdemand&deadline_st_cost}. 
\end{lemma}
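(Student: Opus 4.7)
}

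The plan is to extend the two-step reduction used in the proofs of Lemmas \ref{lem:stationary_softdemand} and \ref{lem:stationary_softdeadline} by introducing two auxiliary variables simultaneously: the realized service amount $\he(\e,\st)$ defined as in \eqref{eq:he_def} and the realized sojourn time $\hst(\e,\st)$ defined as in \eqref{eq:def-st-hat}. Using Lemma \ref{prop:poisson} and the formulas already derived for $\E[\U]$ and $\E[\Q]$ in the two preceding proofs, I will rewrite the objective of \eqref{eq:min_variance+softdemand+softdeadline_st1} as
\begin{align}
\var(P) + \E[\U] + \E[\Q]
= \int_{(\e,\st)\in S} \left\{ \int_0^{\hst(\e,\st)} v(\e,\st,\x)^2 \, d\x + \C(\e - \he(\e,\st)) + \D(\hst(\e,\st)-\st)\right\} \pinti f(\e,\st)\, d\e \, d\st.
\end{align}

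Next, I will nest the minimization in the style of \eqref{eq:thm2_cost2}-\eqref{eq:thm2_cost3}: for fixed $(\he,\hst)$ satisfying $0 \le \he \le \e$ and $\hst \ge \st$, the inner problem is to minimize $\int_0^{\hst} v^2 d\x$ subject to $\int_0^{\hst} v \, d\x = \he$. By Hölder's inequality exactly as in \eqref{eq:thm1_10}, the minimum equals $\he^2/\hst$ and is achieved by the constant policy $v(\e,\st,\x) = (\he/\hst)\mathbf{1}\{\x > 0\}$. Substituting back reduces the problem to the pointwise minimization, for each $(\e,\st)\in S$, of
\begin{align}
J(\he,\hst) := \frac{\he^2}{\hst} + \C(\e - \he) + \D(\hst - \st)
\qquad \text{over } \he \in [0,\e],\ \hst \in [\st,\infty).
\end{align}

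Finally, I will carry out the case analysis on $J$. The first-order conditions $\partial_{\he} J = 2\he/\hst - \C = 0$ and $\partial_{\hst} J = -\he^2/\hst^2 + \D = 0$ give interior candidates $\he = \C\hst/2$ and $\hst = \he/\sqrt{\D}$, which are mutually consistent only on the degenerate locus $\C = 2\sqrt{\D}$; hence at the optimum at least one of the constraints $\he \le \e$ or $\hst \ge \st$ is active. Checking the three resulting boundary regimes gives: (i) if $\e/\st \le \min\{\C/2,\sqrt{\D}\}$, both constraints bind and $(\he,\hst)=(\e,\st)$; (ii) if $\C/2 \le \sqrt{\D}$ and $\e/\st > \C/2$, only $\hst=\st$ binds and $\he = \C\st/2$; (iii) if $\C/2 > \sqrt{\D}$ and $\e/\st > \sqrt{\D}$, only $\he=\e$ binds and $\hst = \e/\sqrt{\D}$. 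Plugging each $(\he,\hst)$ into the constant-rate formula $v = \he/\hst$ yields precisely the three cases of \eqref{eq:exact_softdemand+deadline_st1}, and substituting $J$ at the optimizers produces the value \eqref{eq:min_variance+softdemand&deadline_st_cost}.

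The main obstacle I anticipate is the case analysis in the last step: verifying that each candidate is indeed the global minimizer over the two-dimensional constrained region (and not just a local one), particularly ruling out the other corner $\he = \e, \hst = \st$ when $\e/\st$ exceeds the relevant threshold, and checking that the boundary where $\C/2 = \sqrt{\D}$ is handled consistently so that the two subregimes agree. This is routine but requires comparing $J$ at the competing candidate points and using the assumption $\e \le \st$ (so that $\e/\st \le 1$) inherited from $S$.
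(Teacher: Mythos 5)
Your proposal is correct and follows essentially the same route as the paper: introduce $\he$ and $\hst$, use the H\"older/constant-rate argument to collapse the inner problem to $\he^2/\hst$, and then minimize $J(\he,\hst)=\he^2/\hst+\C(\e-\he)+\D(\hst-\st)$ pointwise over $(\e,\st)$. The only difference is organizational — you rule out interior optima via the first-order conditions and then compare boundary candidates, whereas the paper directly shows $C(\e,\hst)\ge C(\e,\st)$ (when $\C^2/4\le\D$, and symmetrically otherwise) in three subcases before invoking Lemmas \ref{lem:stationary_softdemand} and \ref{lem:stationary_softdeadline} — and the candidate comparisons you defer as ``routine'' (including the $\he=0$ edge) are exactly the computations the paper carries out in \eqref{eq:thm4_eq1}--\eqref{eq:thm4_eq7}.
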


\begin{proof}{Proof.}
Recall that $\he(\e,\st)$ in \eqref{eq:he_def} denotes the actual service supply for jobs having a service demand $\e$ and a sojourn time $\st$, and $ \hst ( \e,\st )$ in \eqref{eq:def-st-hat} denote the actual sojourn time for such jobs. The optimization problem \eqref{eq:min_variance+softdemand+softdeadline_st1} can be written into
\begin{align}
\label{eq:thm4_cost3}
&  \inf_{v: \eqref{eq:rate_constraints}  \eqref{eq:u_stationary_complex}}\;\;\;\var (P(t) ) + \E[ \C \U ] + \E[\D \Q ]    \\
\label{eq:thm4_cost4} 
&=  \inf_{\substack{\he( \e,\st  ) \geq \e \\ \hst( \e,\st  )  \geq  \st } } \left[ \inf_{\substack{ v:  \eqref{eq:rate_constraints}  \eqref{eq:u_stationary_complex}} } \var(P)  +  \int_{(\e,\st) \in S} \{  \C (  \e - \he ( \e,\st  )  )   + \D ( \hst ( \e,\st  ) - \st ) \} \pinti f(\e,\st) d\e d\st \right]  \\
\label{eq:thm4_cost5} 
&=  \inf_{\substack{\he( \e,\st  ) \geq \e \\ \hst( \e,\st  )  \geq  \st } }  \int_{(\e,\st) \in S}  \left[  \frac{ \he( \e,\st  )^2 }{\hst( \e,\st  ) } +   \C (  \e - \he ( \e,\st  )  ) +   \D ( \hst ( \e,\st  ) - \st ) \right]  \pinti f(\e,\st)  d\e d\st  \\
&=  \inf_{\substack{\he( \e,\st  ) \geq \e \\ \hst( \e,\st  )  \geq  \st } }  \int_{(\e,\st) \in S} C(\e, \hst)    \pinti f(\e,\st)  d\e d\st  ,
\end{align}
where $C(\e, \hst) $ is defined to be
\begin{align}
\label{eq:thm4_solution2}
C(\e, \hst) &:= \frac{ \he(\e,\st)^2 }{\hst( \e,\st  )  } +  \C (  \e - \he ( \e,\st  )  )  +   \D ( \hst ( \e,\st  ) - \st )  \\
&= \begin{dcases}
\frac{ \e^2 }{ \st }  & \text{if }  \hst =  \st  \text{ and } \frac{ \e }{ \st } \leq \frac{ \C }{ 2 }  \\
 \C \left( \e - \frac{ \C \st }{4} \right) & \text{if }  \hst =  \st  \text{ and }  \frac{ \e }{ \st } > \frac{ \C }{ 2 } \\
\frac{ \e^2 }{ \hst }  + \D ( \hst - \st) & \text{if }  \hst > \st  \text{ and } \frac{ \e }{ \hst } \leq \frac{ \C }{ 2 }  \\
 \C \left( \e - \frac{ \C \hst }{4} \right)  + \D ( \hst - \st) & \text{if }  \hst >  \st  \text{ and }  \frac{ \e }{ \hst } > \frac{ \C }{ 2 } \\
\end{dcases}.
\end{align}
Relation \eqref{eq:thm4_cost5} holds because $\inf_{v: \eqref{eq:rate_constraints}  \eqref{eq:u_stationary_complex} } \var(P)$ is attained by 
\begin{align}
\label{eq:thm4_solution1}
v( \e ,\st, \x ) = \frac{ \he( \e,\st  ) }{ \hst ( \e,\st  ) } .
\end{align}
The optimal $\he^*( \e,\st  )$ and $\hst^* ( \e,\st  )$ is the point-wise maximum of the integrand of \eqref{eq:thm4_cost5}.

To derive a closed form expression for $\he^*( \e,\st  )$ and $\hst^* (\e,\st) $, we first show that in the case of $\C^2 / 4 \leq \D$, we have $\hst^* (\e,\st) = \st$. Suppose not and $\hst (\e,\st) = \hst \geq \st$. Then, if $\e \leq  \C \st /  2$, we have 
\begin{align}
C(\e,\hst) - C(\e,\st) &= \frac{ \e^2 }{ \hst }  + \D ( \hst - \st)  - \frac{ \e^2 }{ \st } \\
&= ( \hst - \st) \left( \D - \frac{ \e^2 }{ \st \hst} \right)\\
\label{eq:thm4_eq1}
&\geq ( \hst - \st) \left\{ \D -  \left(\frac{ \C \st}{2} \right)^2 \frac{ 1 }{ \st \hst} \right\}\\
\label{eq:thm4_eq2}
&\geq ( \hst - \st) \left\{ \D - \frac{ \C^2 }{4} \right\}\\
\label{eq:thm4_eq3}
&\geq 0, 
\end{align}
where \eqref{eq:thm4_eq1} is due to $\e \leq  \C \st /  2$; \eqref{eq:thm4_eq2} is due to $\hst > \st$; and \eqref{eq:thm4_eq3} is due to $\C^2 / 4 \leq \D$. 
When $\e \in (  \C \st /  2 ,  \C \hst /  2 ]$, we have 
\begin{align}
C(\e,\hst) - C(\e,\st) &= \frac{ \e^2 }{ \hst }  + \D ( \hst - \st)  -  \C \left( \e - \frac{ \C \st }{4} \right) \\
\label{eq:thm4_eq4}
&\geq \D ( \hst - \st)  +  \left(\frac{ \C \st}{2} \right)^2 \frac{1}{\hst}  - \C \frac{ \C \hst }{2 } +  \frac{ \C^2 \hst }{4} \\
\label{eq:thm4_eq5}
&\geq \D ( \hst - \st)  + \frac{1}{2}   \C^2 ( \st - \hst ) \\
&= ( \hst - \st) \left\{ \D - \frac{ \C^2 }{4} \right\}\\
\label{eq:thm4_eq6}
&\geq 0, 
\end{align}
where \eqref{eq:thm4_eq4} is due to $\e \leq  \C \st /  2$; \eqref{eq:thm4_eq5} is due to $\hst > \st$; and \eqref{eq:thm4_eq6} is due to $\C^2 / 4 \leq \D$. 
When $\e > \C \hst /  2 $, we have 
\begin{align}
C(\e,\hst) - C(\e,\st) &=  \C \left( \e - \frac{ \C \hst }{4} \right)  + \D ( \hst - \st)  -  \C \left( \e - \frac{ \C \st }{4} \right) \\
&=  ( \hst - \st) \left( \D - \frac{ \C^2 }{4} \right) \\
\label{eq:thm4_eq7}
& \geq 0
\end{align}
where \eqref{eq:thm4_eq7} is due to $\C^2 / 4 \leq \D$. Since \eqref{eq:thm4_eq3}, \eqref{eq:thm4_eq6}, and \eqref{eq:thm4_eq7} contradict with the supposition that $\hst(\e,\st) = \hst > \st$ is optimal, we have $\hst^* ( \e,\st  ) = \st$. Then, given $\hst^* ( \e,\st  ) = \st$, the optimal $\he^* ( \e,\st  )$ follows from Lemma \ref{lem:stationary_softdemand}. In a similar manner, we can show that, in the case of $\C^2 / 4 > \D$, the optimal service supply is $\he^* (\e,\st) = \e$. Then, given $\he^* ( \e,\st  ) = \e$, the optimal $\st^* ( \e,\st  )$ follows from Lemma \ref{lem:stationary_softdeadline}. Finally, combining above, we obtain \eqref{eq:exact_softdemand+deadline_st1} as the closed-form solution of \eqref{eq:min_variance+softdemand+softdeadline_st1}.  \hfill \qed

\end{proof}

Theorem \ref{thm:stationary_softdemand+deadline} is an immediate consequence of Lemma \ref{lem:stationary_softdemand+deadline_new}. To see it, recall that the optimal value of problem~\eqref{eq:min_variance+softdemand+softdeadline_st1} lower-bounds that of problem~\eqref{eq:min_variance+softdemand+softdeadline_st}. Moreover, scheduler \eqref{eq:exact_softdemand+deadline_st} of the form \eqref{eq:u_stationary_simple} can produce identical service rates to \eqref{eq:exact_softdemand+deadline_st1}, so it is also optimal for problem~\eqref{eq:min_variance+softdemand+softdeadline_st}.

\section{Proof of Lemma \ref{thm:lower_bound-subproblem1}}
 \label{sec:lemma4}

To solve $\inf_{\uc:  \eqref{eq:u_cent1} } L (\uc; \gamma)$, we first observe that 
\begin{align}
\label{eq:lower-bound-6-0}
 \inf_{\uc:\eqref{eq:u_cent1}} L (\uc; \gamma) 
&=  \inf_{\uc:\eqref{eq:u_cent1}} \lim_{T \rightarrow \infty} \frac{1}{T} \int_{0}^{T} \var (  \p(t) )  + \gamma  ( \var(X(t)) -  D ) dt \\
\label{eq:lower-bound-6}
&\geq   \inf_{\uc:\eqref{eq:u_cent1}}  \lim_{T \rightarrow \infty} \inf_{\uc:\eqref{eq:u_cent1}} \frac{1}{T}   \int_{ 0}^{T} \var (  \p(t) )  + \gamma  ( \var(X(t)) -  D ) dt\\
\label{eq:lower-bound-9}
&=   \lim_{T \rightarrow \infty}\inf_{\uc:\eqref{eq:u_cent1}} \frac{1}{T}   \int_{ 0}^{T} \var (  \p(t) )  + \gamma  ( \var(X(t)) -  D )  dt,
\end{align}
where equality \eqref{eq:lower-bound-6-0} holds by the definition of $L (\uc; \gamma) $, inequality \eqref{eq:lower-bound-6} holds because $(1/T) \int_{0}^{T} \var (  \p(t) )  + \gamma  ( \var(X(t)) -  D ) dt $ is always less than $(1/T) \inf_{\uc:\eqref{eq:u_cent1}}  \int_{ 0}^{T} \var (  \p(t) )  + \gamma  ( \var(X(t)) -  D )dt $.

Now we consider representing the integral of \eqref{eq:lower-bound-9} as  the sum of $\E[ (P(t_n)-\bar P)^2+\gamma   (X(t_n)-\bar X)^2]$ at discrete points in time, where $\{t_{n}\}$ have a fixed sampling interval $\hh  = t_{n+1} -  t_n, \forall n \in \Z$. So, the dynamics of $X(t_{n})$ satisfies 
\begin{align}
\label{eq:x-discrete}
&X(t_{n+1})  = X(t_n )  + A(t_n,\hh) - \hh P(t_n) , 
\end{align}
where $A(t_n,\hh)$ is the demand added to $X$ due to new arrivals in the time interval $[t_n, t_{n+1})$ (the total demands of jobs arriving at this interval), $\hh P(t_n) $ is the total service provided during $[t_n, t_{n+1})$. Here, the service policy $\uc$ is assumed to produce constant values during each sampling intervals, \ie $\uc (k, t_1, A_t) = \uc (k, t_2, A_t) $ for any $t_1, t_2 \in [t_n, t_{n+1}),$\footnote{As the sampling interval goes to zero, $\uc$ can realize any continuous function $\uc (k, t, A_t)$ of $t$.} so the service capacity takes the constant value $P(t_n)$ during this interval. 

Let $L_{\hh,N}(u; \gamma )$ is defined by 
\begin{align}
\label{eq:def_L}
L_{\hh,N}(\uc; \gamma ) :=& \E\left[ \gamma (X(t_{N})-\bar X)^2 \right]  +\sum_{k = 0}^{N- 1} \E \left[   (P(t_{k})  - \bar P)^2   + \gamma (X(t_{k})-\bar X)^2 \right] .
\end{align}
Observe that \eqref{eq:lower-bound-9} satisfies 
\begin{align}
& \lim_{T \rightarrow \infty} \inf_{\uc:\eqref{eq:u_cent1}} \frac{1}{T}   \int_{ 0}^{T}  \E[ (P(t)-\bar P)^2 + \gamma (  (X(t)-\bar X)^2 -D) ] dt \\ 
\label{eq:lower-bound-10-0}
&=  \lim_{T \rightarrow \infty}  \inf_{\uc:\eqref{eq:u_cent1} }  \lim_{\hh \rightarrow 0} \frac{1}{T}   L_{\hh, \lceil T/h \rceil }(\uc; \gamma ) \hh- \gamma D  \\
\label{eq:lower-bound-10}
&= \lim_{T \rightarrow \infty}   \lim_{\hh \rightarrow 0}  \inf_{\uc:\eqref{eq:u_cent1}}  \frac{1}{T}   L_{\hh, \lceil T/h \rceil }(\uc; \gamma ) \hh - \gamma D . 
\end{align}

To solve \eqref{eq:lower-bound-10}, we first consider the cost-to-go $J_n (X(t_{n}))$ for some $\hh>0$ and $N \in \Z$, \ie 
\begin{align}
\label{eq:cost-to-go_optimal-000}
J_n (X(t_{n}))  :=&   \E\left[ \gamma (X(t_{N}) - \bar X )^2 \right]+\sum_{k = n}^{N-1}  \E\left[  (P(t_{k}) - \bar P )^2   +\gamma (X(t_{k}) - \bar X)^2 \right].
\end{align}
Using mathematical induction, we show below that, at the optimal solution $\uc^*$, the cost-to-go takes the form
 \begin{align}
\label{eq:cost-to-go_optimal}
 &J_n (X(t_{n})) =\E \left[ p_{n}  (X(t_{n}) - \bar X )^2 \right]  +\sum_{k = n}^{N-1} \E[ p_{k+1}  (A(t_n,\hh) - \bar A_{\hh} )^2 ],
\end{align}
where $\{p_k\}$ satisfies the Riccati difference equation 
\begin{align}
\label{eq:rec_equation}
&p_k = p_{k+1} - \frac{ \hh^2 p_{k+1}^2 }{ \hh^2 p_{k+1} + 1} + \gamma , &p_N = \gamma .
\end{align}
First, condition \eqref{eq:cost-to-go_optimal} holds for $n = N$ by the construction of \eqref{eq:cost-to-go_optimal-000}. Second, assume that condition \eqref{eq:cost-to-go_optimal} holds for $n + 1$. Let $ \bar A_{\hh}$ be the stationary mean of $A(t_n,\hh)$.
Recall from \eqref{eq:cost-to-go_optimal-000} that $J_n(X(t_n))$ is the sum of term $n$ to term $N$. 
Thus, it can be decomposed into the term of $n$ and the sum of $n+1$ term to $N$ term, which is $J_{n+1}(X(t_{n+1}))$. So, we have 
 \begin{align}
\nonumber
&J_n (X(t_{n}))  \\
\label{eq:cost-to-go_rec1}
&= \inf_{P(t_n)} \E [ (P(t_{n}) - \bar P )^2   +  \gamma  (X(t_{n}) - \bar X)^2 +J_{n+1} ( X( t_{n+1} ) )] \\
\label{eq:cost-to-go_rec1-2}
&=\inf_{P(t_n)} \E [ (P(t_{n}) - \bar P )^2   +   \gamma  (X(t_{n}) - \bar X)^2 + J_{n+1} (  X(t_n )  + A(t_n, \hh) -  \hh  P(t_{n}  ) ] \\
\label{eq:cost-to-go_rec1-3}
&=\inf_{P(t_n)} \E [ (P(t_{n}) - \bar P )^2   +   \gamma  (X(t_{n}) - \bar X)^2 + J_{n+1} (  X(t_n )  + ( A(t_n, \hh) - \bar A_{\hh} ) -  \hh ( P(t_{n} - \bar P )  ) ] \\
\label{eq:cost-to-go_rec1-4}
&= \inf_{P(t_n)} \E [ (P(t_{n}) - \bar P )^2   +   \gamma  (X(t_{n}) - \bar X)^2 +p_{n+1} (  X(t_n )  + (A(t_n, \hh) - \bar A_{\hh} )-  \hh ( P(t_{n})  -  \bar P ) )^2 ] \\
\nonumber
&\;\;\;\;\;\;\;\;\;\;\;\;\;\;\;+ \sum_{k = n+1}^{N-1} \E[ p_{k+1}  (A(t_k,\hh) - \bar A_{\hh} )^2 ]
\end{align}
where \eqref{eq:cost-to-go_rec1-2} uses relation \eqref{eq:x-discrete}; \eqref{eq:cost-to-go_rec1-3} relies on $ \bar A_{\hh} = \hh \bar P$ from Brumelle's formula; \eqref{eq:cost-to-go_rec1-4} uses the induction hypothesis that the cost-to-go at $n+1$ satisfies \eqref{eq:cost-to-go_optimal}. Note that 
\begin{align}
\label{eq:A-X-indep}
\E[(A(t_n,\hh) - \bar A_{\hh})   X(t_n ) ] = \E[A(t_n,\hh) - \bar A_{\hh}  ] \E[ X(t_n ) ] = 0 , 
\end{align}
where the first equality holds because future arrivals in interval $[t_n , t_{n+1})$ does not depend on past arrivals in interval $[0 , t_n)$, and the second equality is due to $\E[ A(t_n,\hh) - \bar A_{\hh} ] = 0$. Expanding the last quadratic term in \eqref{eq:cost-to-go_rec1} and applying $\E[(A(t_n,\hh) - \bar A_{\hh})   X(t_n ) ] = 0$, we can rewrite \eqref{eq:A-X-indep} into 
\begin{align}
\nonumber
J_n (X(t_{n})) =
& (p_{n+1} +\gamma ) (X(t_{n}) - \bar X)^2 +\sum_{ k = n}^N p_{k+1} \E  (A(t_k,\hh) - \bar A_{\hh} )^2 ] \\
\label{eq:pn}
&+ \inf_{P(t_n)} \{ (1+\hh^2 p_{n+1}) (P(t_n)  -  \bar P )^2 - 2 \hh  \gamma p_{n+1} ( X (t_n) - \bar X) ( P (t_n) - \bar P)  ].
\end{align}
The minimum value of \eqref{eq:pn} is attained by 
\begin{align}
 \label{eq:lower-bound-opt-solution}
P(t_n,\hh)  -  \bar P_{\hh}   = \frac{\hh p_{n} }{1 + \hh^2 p_{n}}  (X (t_n) - \bar X),
\end{align}
and the optimal cost-to-go becomes \eqref{eq:cost-to-go_optimal}, where $p_{n}$ is defined by \eqref{eq:rec_equation}. As $N \rightarrow \infty$, $p_k$ converges to a unique positive scalar 
\begin{align}
\label{eq:rec_equation_limit}
p := \lim_{N \rightarrow \infty } p_k = \frac{\hh^2 \gamma +\hh \sqrt{\gamma} \sqrt{\hh^2 \gamma +4}}{2 \hh^2},
\end{align}
which is also a fixed point of \eqref{eq:rec_equation}~\cite{bertsekas1995dynamic}. 
Taking the limit of $N \rightarrow \infty$ and $\hh \rightarrow 0$ for \eqref{eq:lower-bound-opt-solution} and \eqref{eq:rec_equation_limit}, the infimum of \eqref{eq:lower-bound-10} is attained when
\begin{align}
&P(t) - \bar P = \sqrt{\gamma} \; (X (t) - \bar X) .
\end{align}
Finally, the infimum value of \eqref{eq:lower-bound-10} is computed as 
\begin{align}
&\lim_{T \rightarrow \infty}\inf_{\uc:\eqref{eq:u_cent1}} \frac{1}{T}   \int_{ 0}^{T} \var (  \p(t) )  + \gamma  ( \var(X(t)) -  D )  dt \\
\label{eq:lowerbound_opt_cost2}
&=  \lim_{T \rightarrow \infty} \lim_{\hh \rightarrow 0} \frac{h}{T} \sum_{k =0}^{N-1} \E[ p_{k+1}  (A(t_k,\hh) - \bar A_{\hh} )^2 ] - \gamma D   \\
\label{eq:lowerbound_opt_cost2-2}
&=\sqrt{ \gamma }\pinti  \E[ \e^2  ].
\end{align}
where equality \eqref{eq:lowerbound_opt_cost2} is due to \eqref{eq:cost-to-go_optimal}; and equality \eqref{eq:lowerbound_opt_cost2-2} is derived from \eqref{eq:rec_equation_limit}.

\section{Proof of Corollary \ref{thm:lower_bound2}}
\label{app:corollary2}

Recall from Lemma \ref{thm:lower_bound} that $X(t)$ is the total remaining demands of jobs arriving before $t$. For any time interval $\hh > 0$, $X(t)$ satisfies the following dynamics:
\begin{align}
\label{eq:x-dynamics}
&X(t + \hh )  = X(t )  + A(t,\hh) - P(t,\hh) ,
\end{align}
where $A(t,\hh)$ is the total demand of jobs arriving during time interval $[t, t + \hh)$, and $P(t,\hh)$ is the total amount served during this interval, \ie 
\begin{align}
&A(t,\hh) := \sum_{ \{ k \in \hV : \at_k \in [t, t+\hh) \} } \e_k , \\
&P(t,\hh) := \int_t^{t+\hh} \p(\tau ) d\tau .
\end{align}
let $D_t = \{ k \in \hV: \at_k + \st_k \leq t \}$ be the set of jobs that departs by time $t$. As no job receives more service than its demand, $X(t)$ is bounded from above by 
\begin{align}
  X(t)  &=  \sum_{k \in A_t} \e_k -\int_{\tau \geq t} P(\tau) d\tau \\ 
   &\leq \sum_{k \in A_t } \e_k - \sum_{i \in D_t } \e_k \\
   &\leq  \sum_{k \in A_t \setminus D_t  } \e_k  
   \label{eq:lower-bound-11}
   \end{align}
   where $D_t = \{ k \in \hV: \at_k + \st_k \leq t \}$ is the set of jobs that departs by time $t$. From \eqref{eq:lower-bound-11} and $  X(t) \geq 0$, the variance of $X(t)$ is upper-bounded by  
\begin{align}
\var( X(t) ) &\leq 
\label{eq:lower-bound-0}
 \E[ X(t)^2 ] \\
 \label{eq:lower-bound-1}
 &\leq \E \left[ \left(\sum_{k \in A_t  \setminus  D_t  } \e_k \right)^2  \right] \\
 & = \var  \left( \sum_{k \in A_t  \setminus  D_t  } \e_k  \right) +  \E\left[ \sum_{k \in A_t  \setminus  D_t  } \e_k\right]^2\\
&=\int_{(\e,\st) \in S} \st \e^2 \pinti f (\e,\st) d\e d\st + \left ( \int_{(\e,\st) \in S}\st  \e  \pinti f(\e,\st)  d\e d\st \right)^2\\
 \label{eq:lower-bound-2}
&= \pinti  \E  \left(  \st \e^2   \right) + \left(  \pinti  \E\left[ \st  \e  \right] \right) ^2
\end{align} 
Applying $D = \pinti   \E  \left(  \st \e^2   \right) + \left(  \pinti \E\left[ \st  \e  \right] \right) ^2$ to Lemma \ref{thm:lower_bound}, we obtain \eqref{eq:solution-Q-final}.

\section{Additional performance bound.}
\label{sec:add_performancebound}

Lemma \ref{thm:lower_bound} characterizes the tradeoff between achieving a small variance of $X(t)$ and achieving a small variance of $P(t)$. Plugging in Exact Scheduling's stationary variance of $X$,
\begin{align}
\label{eq:ES_x} 
\var( X ) =  \pinti   \E \left[ \frac{1}{3} \e^2 \st \right] ,
\end{align}
we obtain a competitive-ratio like bound for Exact Scheduling \eqref{eq:exact_scheduling_st}.
\begin{corollary}
 \label{cor:lower_bound1}
Let $\var(P)$ be the stationary variance of $P(t)$ that is attained by Exact Scheduling \eqref{eq:exact_scheduling_st}. Let $\var( P^\dagger )$ be the minimum stationary variance attainable by any centralized algorithm \eqref{eq:u_cent1} with the same level of $\var( X )$ as Exact Scheduling. Then, the following condition holds:
\begin{align}
\label{eq:ES_x1} 
 \var(P)  \leq \frac{4}{3} \frac{ \E[ \e^2 / \st ] \E [ \e^2 \st ] } { \E[ \e^2 ]^2 }   \var(P^\dagger) ,
\end{align}
where the expectations on the right hand side are taken over the arrival distribution. 
\end{corollary}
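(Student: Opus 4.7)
The plan is to combine the lower bound on $\var(P^\dagger)$ from Lemma \ref{thm:lower_bound} with the closed-form expressions for $\var(P)$ and $\var(X)$ under Exact Scheduling. Specifically, Lemma \ref{thm:lower_bound} tells us that any centralized policy whose remaining-demand variance is bounded by $D$ must satisfy $\var(P^\dagger) \geq \pinti^2 \E[\e^2]^2 / (4D)$. The key idea is to take $D$ to be exactly the value of $\var(X)$ achieved by Exact Scheduling, since the corollary compares against centralized algorithms sharing that same level.

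First, I would record the three quantities of interest. From Theorem \ref{thm:1_stationary_problem1}, Exact Scheduling yields $\var(P) = \pinti \E[\e^2/\st]$. From \eqref{eq:ES_x}, its remaining-demand variance is $\var(X) = (\pinti/3)\,\E[\e^2 \st]$. Setting $D = (\pinti/3)\,\E[\e^2 \st]$ in Lemma \ref{thm:lower_bound} immediately yields
\begin{align}
\var(P^\dagger) \;\geq\; \frac{\pinti^2 \E[\e^2]^2}{4 D} \;=\; \frac{3\,\pinti\, \E[\e^2]^2}{4\,\E[\e^2 \st]}.
\end{align}
Taking the ratio $\var(P)/\var(P^\dagger)$ and cancelling the common factor of $\pinti$ produces precisely the claimed bound $(4/3)\,\E[\e^2/\st]\,\E[\e^2\st]/\E[\e^2]^2$.

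The only nontrivial ingredient is the expression \eqref{eq:ES_x} for the stationary variance of $X$ under Exact Scheduling; but this is a straightforward application of the Campbell formula (Theorem \ref{thm:Campbell}) used in Lemma \ref{prop:poisson}. Under Exact Scheduling, a job contributes remaining demand $\e - (\e/\st)(t-\at) = \e(\st - (t-\at))/\st$ while in service, so the squared-contribution integrated over the age variable gives $\int_0^\st \e^2 (\st - s)^2/\st^2 \, ds = \e^2 \st/3$; multiplying by the intensity $\pinti f(\e,\st)$ and integrating over the mark space recovers $\var(X) = (\pinti/3)\E[\e^2\st]$.

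I expect no serious obstacle beyond checking \eqref{eq:ES_x} carefully, since the rest is arithmetic. The conceptual point worth highlighting is that the bound is a specialization of Corollary \ref{thm:lower_bound2}: rather than using the loose upper bound \eqref{eq:lower-bound-2} on $\var(X)$ for an arbitrary centralized policy, we substitute the exact value achieved by Exact Scheduling itself, which is why the constant improves from $4$ to $4/3$ and the competitive-ratio-like bound becomes meaningful for the natural comparison class.
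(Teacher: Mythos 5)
Your proposal is correct and follows essentially the same route as the paper, which simply substitutes $D=\var(X)=\pinti\E[\e^2\st]/3$ from \eqref{eq:ES_x} into Lemma \ref{thm:lower_bound} and divides the Exact Scheduling variance $\pinti\E[\e^2/\st]$ by the resulting lower bound on $\var(P^\dagger)$. Your Campbell-formula verification of \eqref{eq:ES_x} (remaining demand $\e(\st-s)/\st$ at age $s$, squared and integrated to give $\e^2\st/3$) is a correct filling-in of a step the paper states without proof, and your closing observation about why the constant improves from $4$ to $4/3$ relative to Corollary \ref{thm:lower_bound2} is accurate.
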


%
%
%

In the setting of soft service requirements, Generalized Exact Scheduling attains 
\begin{align}
\var(X) =
& \pinti \E\left[ \frac{\e^2 \st}{3} \mathbf{1} \left\{ \frac{\e}{\st} \leq \min \left \{ \frac{\C }{2} ,  \sqrt{\D} \right\}    \right\}   \right] 
+ \pinti  \E\left[ \left( \frac{\C^2 \st^3}{12}-\frac{1}{2} \C \e \st^2+\e^2 \st \right) \mathbf{1}  \left\{ \frac{\e}{\st} > \frac{\C }{2} \geq \sqrt{\D}  \right\}  \right] \\
& + \pinti  \E\left[   \left( \frac{\e^3}{3 \sqrt{\D}} \right) \mathbf{1}  \left\{ \frac{\e}{\st} > \sqrt{\D}> \frac{\C }{2} \right\} 
 \right]    . 
\end{align}
Combining above and Lemma \ref{thm:lower_bound}, we obtain the following corollary. 
\begin{corollary}
 \label{cor:lower_bound1_soft}
Let $\var(P)$ be the stationary variance of $P(t)$ that is attained by Generalized Exact Scheduling \eqref{eq:exact_softdemand+deadline_st}. Let $\var( P^* )$ be the minimum stationary variance attainable by any centralized algorithm of the form \eqref{eq:u_cent1} with the same level of $\var( X )$ as Generalized Exact Scheduling. Then, the following condition holds:
\begin{align}
\label{eq:ES_x2} 
 \var(P)  \leq \frac{ \alpha \beta  } { \E[ \e^2 ]^2 }   \var(P^*) ,
\end{align}
where 
\small
\begin{align*}
\alpha &= \E\left[  \frac{ \e^2 }{ \st }  \mathbf{1} \left\{ \frac{\e}{\st} \leq \min \left \{ \frac{\C }{2} ,  \sqrt{\D} \right\}    \right\}  
+ \C  \left( \sqrt{\D}   - \frac{ \C \st }{ 4 } \right) \mathbf{1}  \left\{ \frac{\e}{\st} > \frac{\C }{2} \geq \sqrt{\D}  \right\} 
+ \left( 2  \sqrt{\D}  \e - \D \st \right) \mathbf{1}  \left\{ \frac{\e}{\st} > \sqrt{\D}> \frac{\C }{2} \right\} 
\right] \\
\beta &=  \E\left[ \frac{\e^2 \st}{3} \mathbf{1} \left\{ \frac{\e}{\st} \leq \min \left \{ \frac{\C }{2} ,  \sqrt{\D} \right\}    \right\}  
+  \left( \frac{\C^2 \st^3}{12}-\frac{1}{2} \C \e \st^2+\e^2 \st \right) \mathbf{1}  \left\{ \frac{\e}{\st} > \frac{\C }{2} \geq \sqrt{\D}  \right\} 
+ \left( \frac{\e^3}{3 \sqrt{\D}} \right) \mathbf{1}  \left\{ \frac{\e}{\st} > \sqrt{\D}> \frac{\C }{2} \right\} 
\right].
\end{align*}
\normalsize
\end{corollary}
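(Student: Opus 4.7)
The plan is to mirror the proof of Corollary \ref{cor:lower_bound1}, combining Lemma \ref{thm:lower_bound} with closed-form stationary variance computations for Generalized Exact Scheduling \eqref{eq:exact_softdemand+deadline_st}. Reading $\var(P)$ off Theorem \ref{thm:stationary_softdemand+deadline} gives $\var(P)=\pinti\alpha$ once the cost expression is split across the three regimes of Figure \ref{fig:optimal policy_softdemand&deadline}. The remaining ingredient is an explicit formula for $\var(X)$ under Generalized Exact Scheduling; once both variances are in hand, Lemma \ref{thm:lower_bound} instantiated at $D=\var(X)$ will supply a matching lower bound on $\var(P^\ast)$, and dividing one by the other delivers the claimed ratio.

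For the $\var(X)$ computation, I would invoke Campbell's theorem in the same way as in Lemma \ref{prop:poisson}, but applied to the remaining-demand marks rather than the service-rate marks. This gives
\begin{equation*}
\var(X) \;=\; \pinti \int_{S} \int_0^{\hst(\e,\st)} \big(x(\e,\st,s)\big)^2 f(\e,\st)\,ds\,d\e\,d\st,
\end{equation*}
where $x(\e,\st,s)$ denotes the remaining demand at elapsed time $s$ after arrival for a job with initial profile $(\e,\st)$, and $\hst(\e,\st)$ is its actual sojourn time under \eqref{eq:exact_softdemand+deadline_st}. In each of the three regimes the service rate is constant, so $x(\e,\st,s)$ is affine in $s$ and the inner integral collapses to an elementary cubic. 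Specifically, in the high-penalty regime the trajectory $x(s)=\e(\st-s)/\st$ on $[0,\st]$ contributes $\e^{2}\st/3$; in the low-demand-penalty regime the trajectory $x(s)=\e-s\,\C/2$ on $[0,\st]$ contributes $\C^{2}\st^{3}/12 - \C\e\st^{2}/2 + \e^{2}\st$; and in the low-deadline-penalty regime the trajectory $x(s)=\e-s\sqrt{\D}$ on $[0,\e/\sqrt{\D}]$ contributes $\e^{3}/(3\sqrt{\D})$. Indicating these terms against the corresponding regime partition yields $\var(X)=\pinti\beta$.

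Finally, Lemma \ref{thm:lower_bound} applied with $D=\pinti\beta$ gives $\var(P^\ast) \geq \pinti\,\E[\e^{2}]^{2}/(4\beta)$ for any centralized policy of the form \eqref{eq:u_cent1} attaining the same total remaining demand variance as Generalized Exact Scheduling. Dividing $\var(P)=\pinti\alpha$ by this lower bound produces the competitive-ratio-like inequality $\var(P)\leq \bigl(\alpha\beta / \E[\e^{2}]^{2}\bigr)\var(P^\ast)$, up to the proportionality constant inherited from Lemma \ref{thm:lower_bound}. The main obstacle is the bookkeeping in the $\var(X)$ derivation: correctly identifying $\hst(\e,\st)$ regime-by-regime (equal to $\st$ in the first two regimes but to $\e/\sqrt{\D}$ in the third), ensuring the affine trajectories and their integration limits are consistent with the policy \eqref{eq:exact_softdemand+deadline_st}, and matching the resulting three cubic expressions to the three summands defining $\beta$. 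Once this ledger is laid out, the remainder of the argument is a direct substitution into Lemma \ref{thm:lower_bound}, exactly as in the transition from \eqref{eq:ES_x} to \eqref{eq:ES_x1}.
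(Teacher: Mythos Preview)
Your proposal is correct and follows essentially the same route as the paper: compute $\var(X)$ for Generalized Exact Scheduling regime-by-regime via Campbell's theorem to obtain $\var(X)=\pinti\beta$, then plug $D=\pinti\beta$ into Lemma~\ref{thm:lower_bound} and divide, exactly as in the passage from \eqref{eq:ES_x} to \eqref{eq:ES_x1}. Your regime-by-regime integrals for $\beta$ match the paper's stated expression, and your hedge ``up to the proportionality constant'' is appropriate since the factor of $4$ from Lemma~\ref{thm:lower_bound} does not appear in the paper's displayed bound \eqref{eq:ES_x2} (compare with the explicit $4/3$ in \eqref{eq:ES_x1}).
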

Corollary \ref{cor:lower_bound1_soft} bounds the ratio of $ \var(P) $ achievable by Generalized Exact Scheduling (the optimal distributed algorithm) to $\var(P^*)$ achievable by any centralized algorithms. Here, the optimal distributed algorithm is subject to soft service constraints, while the optimal centralized algorithm is subject to the same $\var( X )$ with Generalized Exact Scheduling.


\section{Additional numerical results}
\label{sec:app-simulation}
{\color{black}

Section \ref{sec:emp} shows the empirical performance of different algorithms for typical cases. In this section, we provide more detailed experimental data to support the results in Section \ref{sec:emp}. Figure \ref{fig:example-best-worst} compares how Exact Scheduling and Offline Optimal schedule jobs in two instances: one instance in which Exact Scheduling performed competitively, and another instance in which Exact Scheduling performed poorly. Figure \ref{fig:ES_strict_app} provide a more comprehensive view of Figure \ref{fig:ES_strict} by comparing the algorithms' performance for the arrival distribution of a broader range of parameters. }

\begin{figure}[h!]
\begin{subfigure}[H]{\textwidth}
	\centering
	\caption{Example case when Exact Scheduling performed competitively to Offline Optimal}
	\includegraphics[width=12cm]{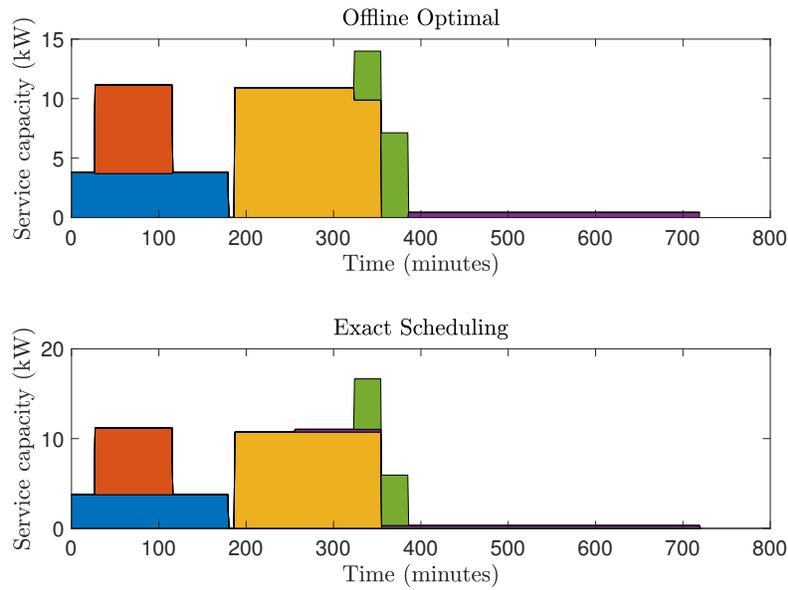}
    \label{fig:ESbest}
\end{subfigure}

\vspace{1cm}
\begin{subfigure}[H]{\textwidth}
	\centering
	\caption{Example case when Exact Scheduling performed poorly to Offline Optimal}
	\includegraphics[width=12cm]{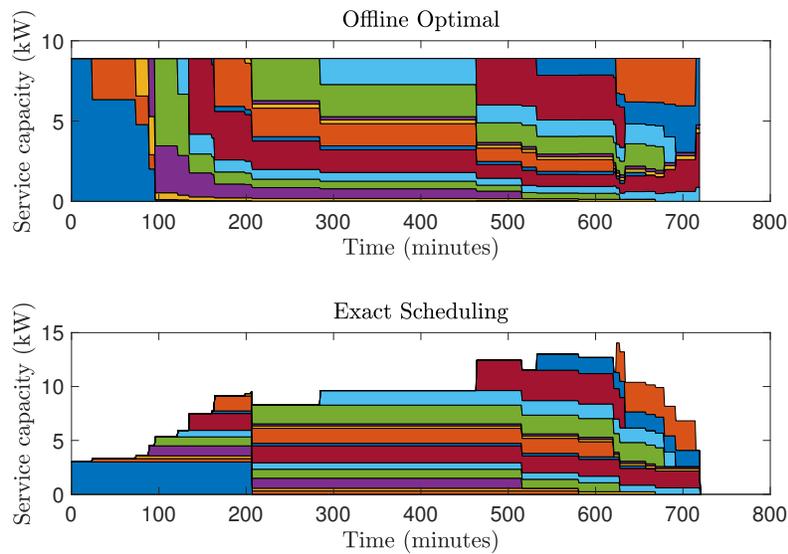}
    \label{fig:ESworst}
\end{subfigure}

\caption{Example cases when Exact Scheduling performs competitively or poorly in comparison to Offline Optimal. Each colored region represents the service rate for one job over its sojourn time, and the height of the colored region (by any color) shows the sum of service rate, \ie the service capacity, at each time. The sum of all service rates at time $t$ is the service capacity $P(t)$. The top plot (a) shows a case when Exact Scheduling performs competitively to Offline Optimal, while the bottom plot (b) shows a case when the Offline Optimal has much better performance than Exact Scheduling. 
}
\label{fig:example-best-worst}

\end{figure}
\newpage

\begin{figure}[h!]
    \centering

\begin{subfigure}[b]{\textwidth}
	\centering
	\caption{Performance in synthetic data generated from arrival distribution I.}

%
%
\definecolor{mycolor1}{rgb}{0.00000,0.44700,0.74100}%
\definecolor{mycolor2}{rgb}{0.85000,0.32500,0.09800}%
\definecolor{mycolor3}{rgb}{0.92900,0.69400,0.12500}%
\definecolor{mycolor4}{rgb}{0.49400,0.18400,0.55600}%
\definecolor{mycolor5}{rgb}{0.46600,0.67400,0.18800}%
\resizebox{.7\columnwidth}{!}{
    \begin{tikzpicture}
    
    \begin{axis}[%
    width=6.049in,
    height=5.094in,
    at={(1.015in,0.688in)},
    scale only axis,
    xtick={10,15,20,25,30},
    xmin=10,
    xmax=30,
    xlabel style={font=\color{white!15!black}},
    xlabel={Mean laxity ($\bar \ell$)},
    ymin=0.5,
    ymax=6,
    ylabel style={font=\color{white!15!black}},
    ylabel={Empirical competitive-ratio},
    axis background/.style={fill=white},
    legend style={at={(0.05,0.95)}, anchor=north west, legend cell align=left, align=left, draw=white!15!black}
    ]
    \addplot [color=mycolor1, line width=1.0pt, mark size=1.7pt, mark=triangle, mark options={solid, rotate=90, black}]
      table[row sep=crcr]{%
    10	2.93932852230498\\
    15	3.8737826475027\\
    20	4.52963202641788\\
    25	5.39872216149675\\
    30	5.53001915983032\\
    };
    \addlegendentry{Immediate Scheduling}
    
    \addplot [color=mycolor2, line width=1.0pt, mark size=4.3pt, mark=diamond, mark options={solid, black}]
      table[row sep=crcr]{%
    10	2.18217234635405\\
    15	2.66739043616621\\
    20	2.99439341789938\\
    25	3.42076597449134\\
    30	3.57139829909578\\
    };
    \addlegendentry{Equal Service}
    
    \addplot [color=mycolor3, line width=1.0pt, mark size=2.5pt, mark=o, mark options={solid, black}]
      table[row sep=crcr]{%
    10	1.86265982839707\\
    15	2.1695762355096\\
    20	2.3473634985732\\
    25	2.59018732364249\\
    30	2.65430588528156\\
    };
    \addlegendentry{Exact Scheduling}
    
    \addplot [color=mycolor4, line width=1.0pt, mark size=2.5pt, mark=x, mark options={solid, black}]
      table[row sep=crcr]{%
    10	1.63834829750703\\
    15	1.91873104947429\\
    20	2.1091224104743\\
    25	2.32346230692476\\
    30	2.41731427400411\\
    };
    \addlegendentry{Exact Scheduling PC}
    
    \addplot [color=mycolor5, line width=1.0pt, mark size=2.5pt, mark=+, mark options={solid, black}]
      table[row sep=crcr]{%
    10	1.54384695227913\\
    15	1.70823191261555\\
    20	1.73474285353245\\
    25	1.86459256742504\\
    30	1.78338760138946\\
    };
    \addlegendentry{Online Optimization MPC}
    
    \end{axis}
    
    \begin{axis}[%
    width=7.806in,
    height=6.25in,
    at={(0in,0in)},
    scale only axis,
    xtick={10,15,20,25,30},
    xmin=0,
    xmax=1,
    ymin=0,
    ymax=1,
    axis line style={draw=none},
    ticks=none,
    axis x line*=bottom,
    axis y line*=left,
    ]
    \end{axis}
\end{tikzpicture}%
}
    \label{fig:ES_strict_app-1}
\end{subfigure}
\begin{subfigure}[b]{\textwidth}
	\centering
	\caption{Performance in synthetic data generated from arrival distribution II.}
%
%
\definecolor{mycolor1}{rgb}{0.00000,0.44700,0.74100}%
\definecolor{mycolor2}{rgb}{0.85000,0.32500,0.09800}%
\definecolor{mycolor3}{rgb}{0.92900,0.69400,0.12500}%
\definecolor{mycolor4}{rgb}{0.49400,0.18400,0.55600}%
\definecolor{mycolor5}{rgb}{0.46600,0.67400,0.18800}%
\resizebox{.7\columnwidth}{!}{
\begin{tikzpicture}

\begin{axis}[%
width=6.028in,
height=4.754in,
at={(1.011in,0.642in)},
scale only axis,
xtick={0,1,2,3,4},
xmin=0,
xmax=4,
xlabel style={font=\color{white!15!black}},
xlabel={Maximum ratio of laxity to demand ($\bar \gamma - 1$)},
ymin=0.5,
ymax=4.5,
ylabel style={font=\color{white!15!black}},
ylabel={Empirical competitive-ratio},
axis background/.style={fill=white},
legend style={at={(0.05,0.95)}, anchor=north west, legend cell align=left, align=left, draw=white!15!black}
]
\addplot [color=mycolor1, line width=1.0pt, mark size=1.7pt, mark=triangle, mark options={solid, rotate=90, black}]
  table[row sep=crcr]{%
0	1.09348788726214\\
1	1.90473167017667\\
2	2.53051555654909\\
3	3.52012366162379\\
4	4.34824578592511\\
};
\addlegendentry{Immediate Scheduling}

\addplot [color=mycolor2, line width=1.0pt, mark size=4.3pt, mark=diamond, mark options={solid, black}]
  table[row sep=crcr]{%
0	1.09412624773304\\
1	1.49538269806261\\
2	1.68233618488351\\
3	2.05072185796516\\
4	2.29423809076993\\
};
\addlegendentry{Equal Service}

\addplot [color=mycolor3, line width=1.0pt, mark size=2.5pt, mark=o, mark options={solid, black}]
  table[row sep=crcr]{%
0	1.0647068232769\\
1	1.30070326857497\\
2	1.46936161886848\\
3	1.67777777038091\\
4	1.84259975906828\\
};
\addlegendentry{Exact Scheduling}

\addplot [color=mycolor4, line width=1.0pt, mark size=2.5pt, mark=x, mark options={solid, black}]
  table[row sep=crcr]{%
0	1.035027628506702\\
1	1.14550533493144\\
2	1.29029325097981\\
3	1.47045077917272\\
4	1.61299601020594\\
};
\addlegendentry{Exact Scheduling PC}

\addplot [color=mycolor5, line width=1.0pt, mark size=2.5pt, mark=+, mark options={solid, black}]
  table[row sep=crcr]{%
0	1.0320016479393\\
1	1.17451169228583\\
2	1.25622810761299\\
3	1.37147594007475\\
4	1.43891958066515\\
};
\addlegendentry{Online Optimization MPC}

\end{axis}

\begin{axis}[%
width=7.778in,
height=5.833in,
at={(0in,0in)},
scale only axis,
xmin=0,
xmax=1,
ymin=0,
ymax=1,
axis line style={draw=none},
ticks=none,
axis x line*=bottom,
axis y line*=left,
legend style={legend cell align=left, align=left, draw=white!15!black}
]
\end{axis}
\end{tikzpicture}%
}
    \label{fig:ES_strict_app-2}
\end{subfigure}
    \caption{Performance comparison of algorithms under strict demand and deadline constraints for varying parameters of arrival distribution. The ratio of each algorithm's empirical variance to the Offline Optimal is averaged over all scheduling instances. The number of instances averaged are $500$ for both plots. In plot (a), the mean laxity refers to parameter $\bar \ell$ in distribution I, and the empirical competitive-ratio for $\bar \ell = 25$ is shown in Figure Figure \ref{fig:ES_strict-2}. In plot (b), the maximum ratio of laxity to demand refers to $\bar \gamma - 1$ in distribution II, and the empirical competitive-ratio for $\bar \gamma - 1 = 1$ is shown in Figure Figure \ref{fig:ES_strict-3}.}
    \label{fig:ES_strict_app}
\end{figure}

\newpage

\end{APPENDICES}

\end{document}